\renewcommand{\theequation}{\mbox{\arabic{section}.\arabic{equation}}}
\newcommand{\R}{{\Bbb R}}
\newcommand{\C}{{\Bbb C}}
\newcommand{\D}{{\Bbb D}}
\newcommand{\Z}{{\Bbb Z}}
\newcommand{\diag}{\text{\upshape diag\,}}
\newcommand{\re}{\text{\upshape Re\,}}
\newcommand{\ntlim}{\lim^\angle}
\newtheorem{theorem}{Theorem}[section]
\newtheorem{proposition}[theorem]{Proposition}
\newtheorem{lemma}[theorem]{Lemma}
\newtheorem{corollary}[theorem]{Corollary}
\newtheorem{definition}[theorem]{Definition}
\newtheorem{assumption}[theorem]{Assumption}
\newtheorem{remark}[theorem]{Remark}
\newtheorem{RHproblem}[theorem]{RH problem}
\newtheorem{figuretext}{Figure}
\numberwithin{equation}{section}
\title[The ``good'' Boussinesq equation]
{The ``good'' Boussinesq equation: \\long-time asymptotics}
\author{C. Charlier}
\address{CC: Department of Mathematics, KTH Royal Institute of Technology, 100 44 Stockholm, Sweden.}
\email{cchar@kth.se}
\author{J. Lenells}
\address{JL: Department of Mathematics, KTH Royal Institute of Technology, 100 44 Stockholm, Sweden.}
\email{jlenells@kth.se}
\author{D. Wang}
\address{DW: School of Applied Science, Beijing Information Science and Technology University, Beijing 100192, China.}
\email{wangdsh1980@163.com}
\begin{document}

\begin{abstract}
\noindent
We consider the initial-value problem for the ``good'' Boussinesq equation on the line. Using inverse scattering techniques, the solution can be expressed in terms of the solution of a $3 \times 3$-matrix Riemann-Hilbert problem. We establish formulas for the long-time asymptotics of the solution by performing a Deift-Zhou steepest descent analysis of a regularized version of this Riemann-Hilbert problem. 
\end{abstract}

\maketitle

\noindent
{\small{\sc AMS Subject Classification (2010)}: 34E05, 35G25, 35Q15, 37K15, 76B15.}

\noindent
{\small{\sc Keywords}: Asymptotics, Boussinesq equation, Riemann-Hilbert problem, inverse scattering transform, initial value problem.}

\setcounter{tocdepth}{1}
\tableofcontents

\section{Introduction}
When investigating the bidirectional propagation of small amplitude and long wavelength capillary-gravity
waves on the surface of shallow water, J. Boussinesq derived the classical Boussinesq equation \begin{equation}\label{classical-boussinesq}
 \eta_{tt}-gh_0\eta_{xx} = gh_0\bigg(\frac{3}{2}\frac{\eta^2}{h_0}+\frac{h_0^2}{3}\eta_{xx}\bigg)_{xx},
\end{equation}
where $\eta(x,t)$ is the perturbation free surface, $h_0$ is the mean depth, and $g$ is the gravitational constant \cite{Boussinesq1872}.
This equation was later rediscovered by Keulegan and Patterson \cite{KP1940}. 
In nondimensional units, equation (\ref{classical-boussinesq}) can be written as
\begin{align}\label{badboussinesq}
  u_{tt} - u_{xx} - (u^2)_{xx} - u_{xxxx} = 0,
\end{align}
where $u(x,t)$ is a real-valued function and subscripts denote partial derivatives.
Equation (\ref{badboussinesq}) is often referred to as the ``bad'' Boussinesq equation in contrast to the so-called ``good'' Boussinesq equation
\begin{align}\label{goodboussinesq}
  u_{tt} - u_{xx} + (u^2)_{xx} + u_{xxxx} = 0,
\end{align}
in which the $u_{tt}$ and $u_{xxxx}$ terms have the same sign, thus making the equation linearly well-posed (see e.g. \cite{BS1988, CT2017, F2009, HM2015, L1993} for well-posedness results for (\ref{goodboussinesq})).
Equation (\ref{goodboussinesq}) governs small nonlinear oscillations in an elastic beam and is also known as the ``nonlinear string equation'' \cite{FST1983}.

In the 1990s, Deift and Zhou proposed a steepest descent method for the asymptotic analysis of Riemann-Hilbert (RH) problems \cite{DZ1993}. The Deift-Zhou approach has been successfully utilized to determine long-time asymptotics for a large number of integrable equations such as the modified KdV equation \cite{DZ1993}, the KdV equation \cite{DVZ1994}, the nonlinear Schr\"odinger (NLS) equation \cite{DKMVZ1999,JM2013, TVZ2004}, the Camassa-Holm equation \cite{MKST2009}, the Degasperis-Procesi equation \cite{BLS2017}, and the Toda lattice \cite{DKKZ1996}.
At his $60$th birthday conference in 2005, P. Deift \cite{D2008} presented a list of sixteen open problems, among which he pointed out that ``The long-time behavior of the solutions of the Boussinesq equation with general initial data is a very interesting problem with many challenges. Even in the case with generic initial data the situation is only partially understood.'' The purpose of this paper is to take a step towards the solution of this problem. 

Following \cite{M1981, DTT1982}, we consider the following version of the ``good'' Boussinesq equation:
\begin{align}\label{boussinesq}
& u_{tt} + \frac{4}{3} (u^2)_{xx} + \frac{1}{3} u_{xxxx} = 0,
\end{align}
which can be obtained from (\ref{goodboussinesq}) by a simple shift $u \to u + 1/2$ followed by a trivial rescaling.
Our main result provides explicit formulas for the long-time asymptotics of the solution $u(x,t)$ of equation (\ref{boussinesq}) in a sector in the right half-plane $\{x>0,t>0\}$ under the assumption that the initial data lie in the Schwartz class and satisfy the physically natural assumption that $u_t(x,0)$ has zero mean.
The proof is based on a Deift-Zhou steepest descent analysis of a $3\times 3$-matrix RH problem, which is parametrized by $x$ and $t$. This RH problem was derived in \cite{CharlierLenells} by performing a spectral analysis of a Lax pair associated to (\ref{boussinesq}); it is formulated in the complex plane of the spectral parameter $k$ and has a jump contour consisting of the three lines $\R \cup \omega \R \cup \omega^2 \R$ where $\omega = e^{2\pi i/3}$. 
Although the fundamental idea behind the steepest descent analysis of this RH problem is the same in the context of other integrable equations such as the KdV and NLS equations, the analysis is severely complicated by the fact that the RH problem involves $3 \times 3$ matrices. 
Another complication stems from the fact that the RH problem associated with (\ref{boussinesq}) is singular at the origin. Therefore, instead of performing the steepest descent analysis of this RH problem directly, we will analyze a regularized version of the RH problem and then transfer the results to the singular problem.

The paper is organized as follows. The main result is stated in Section  \ref{mainresultsec}. An overview of the rather involved proof, which also contains a statement of the relevant RH problem, is presented in Section \ref{overviewsec}.
The steepest descent analysis begins in Section \ref{transsec} where several transformations of the RH problem are implemented. Local parametrices at the three critical points are constructed in Section \ref{localsec} and the resulting small-norm RH problem is estimated in Section \ref{smallnormsec}. Finally, the asymptotic behavior of $u(x,t)$ is obtained in Section \ref{uasymptoticssec}.

\section{Main result}\label{mainresultsec}
Equation (\ref{boussinesq}) can be rewritten as the system \cite{Z1974}
\begin{align}\label{boussinesqsystem}
& \begin{cases}
 v_{t} + \frac{1}{3}u_{xxx} + \frac{4}{3}(u^{2})_{x} = 0,
 \\
 u_t = v_x,
\end{cases}
\end{align}
which is equivalent to (\ref{boussinesq}) provided that $u_1(x) := u_t(x,0)$ satisfies
\begin{align}\label{u1zeromean}
\int_\R u_1(x) dx = 0.
\end{align}
Instead of analyzing (\ref{boussinesq}) with initial data $u(x,0)$ and $u_t(x,0)$ directly, we will consider the system (\ref{boussinesqsystem}) with initial data $u_0(x) = u(x,0)$ and $v_0(x) = v(x,0)$.

\subsection{Definition of $s(k)$ and $s^A(k)$}
The formulation of our main result involves two spectral functions $s(k)$ and $s^A(k)$ which are defined as follows (see \cite{CharlierLenells} for details).
Suppose $u_0(x)$ and $v_0(x)$ are real-valued functions in $\mathcal{S}(\R)$, where $\mathcal{S}(\R)$ denotes the Schwartz class of rapidly decaying functions on the real line. 
Let $\omega := e^{\frac{2\pi i}{3}}$ and let, for $j = 1,2,3$, $l_j(k) = \omega^j k$.
Define $\mathsf{U}(x,k)$ by
\begin{align}\label{mathsfUdef intro}
\mathsf{U}(x,k) = P(k)^{-1} \begin{pmatrix}
0 & 0 & 0 \\
0 & 0 & 0 \\
-v_0(x)-u_{0x}  & -2u_0(x) & 0
\end{pmatrix} P(k),
\end{align} 
where
\begin{align}\label{Pdef intro}
P(k) = \begin{pmatrix}
\omega & \omega^{2} & 1  \\
\omega^{2} k & \omega k & k \\
k^{2} & k^{2} & k^{2}
\end{pmatrix}.
\end{align}
Let $X(x,k)$ and $X^A(x,k)$ be the $3 \times 3$-matrix valued eigenfunctions defined by the linear Volterra integral equations
\begin{subequations}\label{XXAdef intro}
\begin{align}  
 & X(x,k) = I - \int_x^{\infty} e^{(x-x')\widehat{\mathcal{L}(k)}} (\mathsf{U}X)(x',k) dx',
	\\\label{XXAdefb intro}
 & X^A(x,k) = I + \int_x^{\infty} e^{-(x-x')\widehat{\mathcal{L}(k)}} (\mathsf{U}^T X^A)(x',k) dx',	
\end{align}
\end{subequations}
where $\mathcal{L} = \diag(l_1 , l_2 , l_3)$, $\widehat{\mathcal{L}}$ denotes the operator which acts on a $3 \times 3$ matrix $A$ by $\widehat{\mathcal{L}}A = [\mathcal{L}, A]$ (i.e. $e^{\widehat{\mathcal{L}}}A = e^\mathcal{L} A e^{-\mathcal{L}}$), and $\mathsf{U}^T$ denotes the transpose of $\mathsf{U}$.
The $3 \times 3$-matrix valued functions $s(k)$ and $s^A(k)$ are defined by 
\begin{align}\label{sdef intro}
& s(k) = I - \int_\R e^{-x\widehat{\mathcal{L}(k)}}(\mathsf{U}X)(x,k)dx,
 	\\ \label{sAdef intro}
& s^A(k) = I + \int_\R e^{x\widehat{\mathcal{L}(k)}}(\mathsf{U}^T X^A)(x,k)dx.
\end{align}

\subsection{Statement of the main result}
We first state our main result for the system (\ref{boussinesqsystem}); the formulation for (\ref{boussinesq}) is given as a corollary.
For simplicity, we only consider solutions in the Schwartz class.

\begin{definition}\upshape
We call $\{u(x,t), v(x,t)\}$ a {\it Schwartz class solution of (\ref{boussinesqsystem}) with initial data $u_0, v_0 \in \mathcal{S}(\R)$} if
\begin{enumerate}[$(i)$] 
  \item $u,v$ are smooth real-valued functions of $(x,t) \in \R \times [0,\infty)$.

\item $u,v$ satisfy \eqref{boussinesqsystem} for $(x,t) \in \R \times [0,\infty)$ and 
$$u(x,0) = u_0(x), \quad v(x,0) = v_0(x), \qquad x \in \R.$$ 

  \item $u,v$ have rapid decay as $|x| \to \infty$ in the sense that, for each integer $N \geq 1$ and each $T > 0$,
$$\sup_{\substack{x \in \R \\ t \in [0, T)}} \sum_{i =0}^N (1+|x|)^N(|\partial_x^i u| + |\partial_x^i v| ) < \infty.$$
\end{enumerate} 
\end{definition}

\begin{figure}
\begin{center}
 \begin{overpic}[width=.6\textwidth]{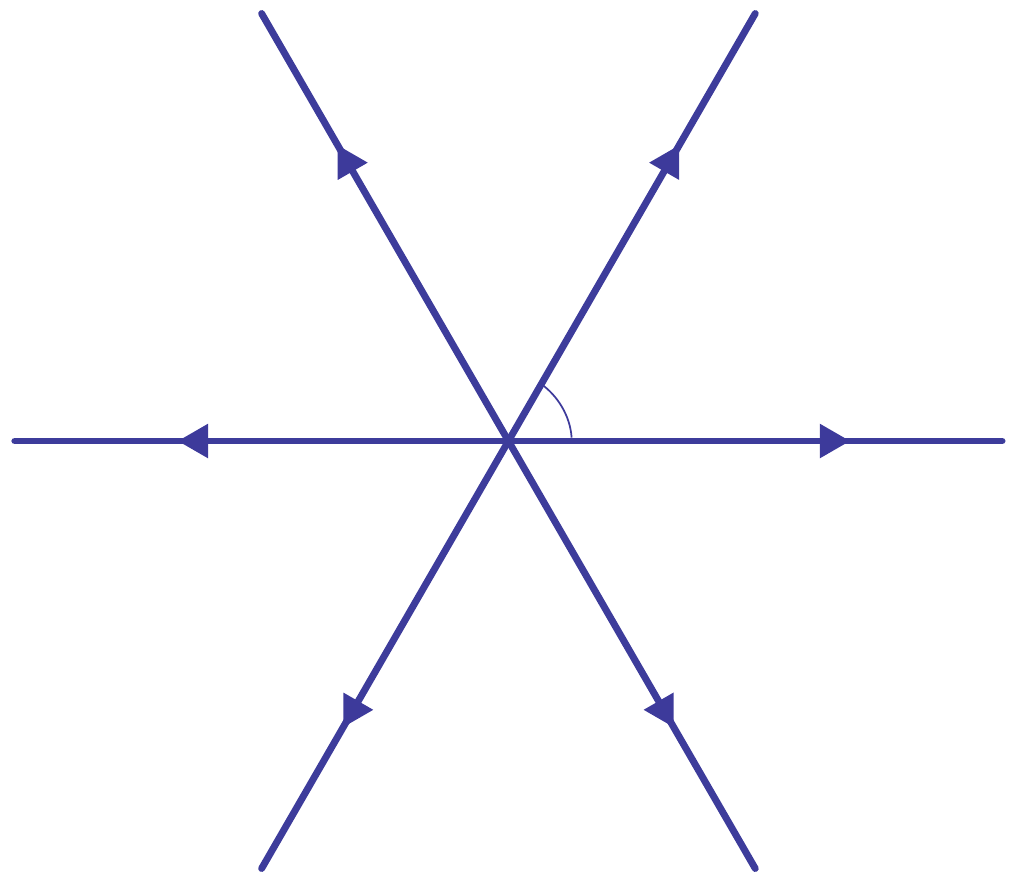}
  \put(101,42.5){\small $\Gamma$}
 \put(56,47){\small $\pi/3$}
 \put(80,60){\small $D_1$}
 \put(48,74){\small $D_2$}
 \put(17,60){\small $D_3$}
 \put(17,25){\small $D_4$}
 \put(48,12){\small $D_5$}
 \put(80,25){\small $D_6$}
  \put(81,38.7){\small $1$}
 \put(67.7,69){\small $2$}
 \put(30,69){\small $3$}
 \put(18,38.7){\small $4$}
 \put(30.5,16){\small $5$}
 \put(67.5,16){\small $6$}
   \end{overpic}
     \begin{figuretext}\label{Gamma.pdf}
       The contour $\Gamma$ and the open sets $D_n$, $n = 1, \dots, 6$, which decompose the complex $k$-plane.
     \end{figuretext}
     \end{center}
\end{figure}

Let $\{D_n\}_{n=1}^{6}$ denote the sectors shown in Figure \ref{Gamma.pdf}. 
We make the following two assumptions.

\begin{assumption}[Absence of solitons]\label{solitonlessassumption}\upshape
Assume that $(s(k))_{11}$ and $(s^A(k))_{11}$ are nonzero for $k \in \bar{D}_1\setminus \{0\}$ and $k \in \bar{D}_4\setminus \{0\}$, respectively.
\end{assumption}

\begin{assumption}[Generic behavior at $k = 0$]\label{originassumption}\upshape
Assume that
$$\lim_{k \to 0} k^2 (s(k))_{11} \neq 0, \qquad \lim_{k \to 0} k^2 (s^A(k))_{11} \neq 0.$$
\end{assumption}

Assumption \ref{solitonlessassumption} ensures that no solitons are present (the case when $s_{11}$ and $s^A_{11}$ have a finite number of simple poles off the contour can be treated by standard methods, see e.g. \cite{FI1996}, or \cite{L3x3} for a $3\times 3$ matrix case).
Assumption \ref{originassumption} ensures that $s_{11}$ and $s_{11}^A$ have double poles at $k = 0$, which is the case for generic initial data \cite{CharlierLenells}.

Define the reflection coefficient $r_1(k)$ by
\begin{align}\label{r1def}
r_1(k) = \frac{(s(k))_{12}}{(s(k))_{11}}, \qquad k \in (0,\infty).
\end{align}
If $u_0, v_0 \in \mathcal{S}(\R)$ are such that Assumptions \ref{solitonlessassumption} and \ref{originassumption} hold, then  $r_1(k)$ extends to a smooth function of $k \in [0, \infty)$ which satisfies $r_1(0) = \omega$ and which has rapid decay as $k \to \infty$, see \cite{CharlierLenells}. In particular, $|r_1(k)| < 1$ for all large enough $k> 0$ and $|r_1(0)| = 1$. Let $\zeta_0 \equiv \zeta_0(u_0, v_0) \geq 0$ be the largest nonnegative number such that $|r_1(\zeta_0/2)| = 1$, i.e.,
\begin{align}\label{zeta0def}
  \zeta_0 = 2 \max\{k \geq 0 \, | \, |r_1(k)| = 1\}.
\end{align}
We can now state our main result, which establishes the long-time behavior of $u(x,t)$ in the asymptotic sector $x/t > \zeta_0$, see Figures \ref{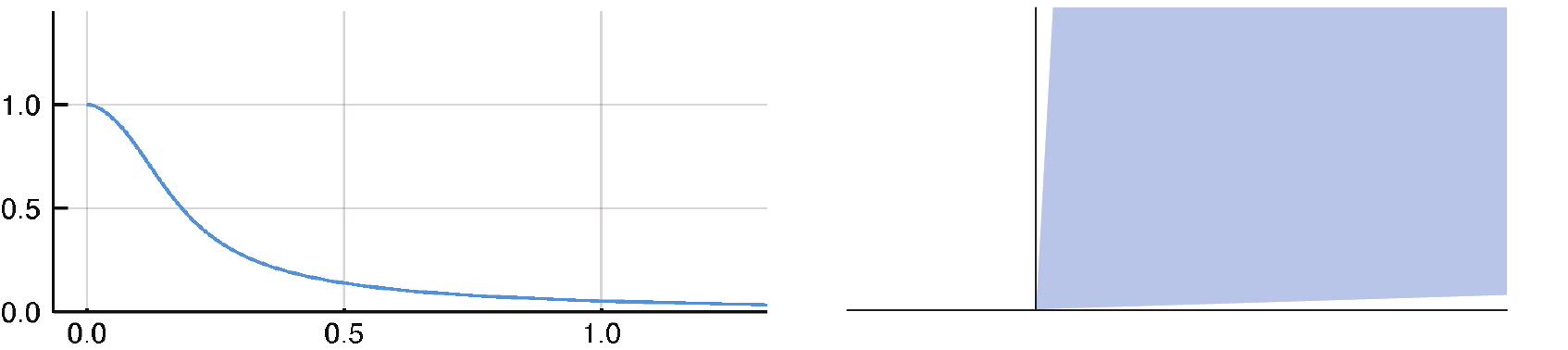} and \ref{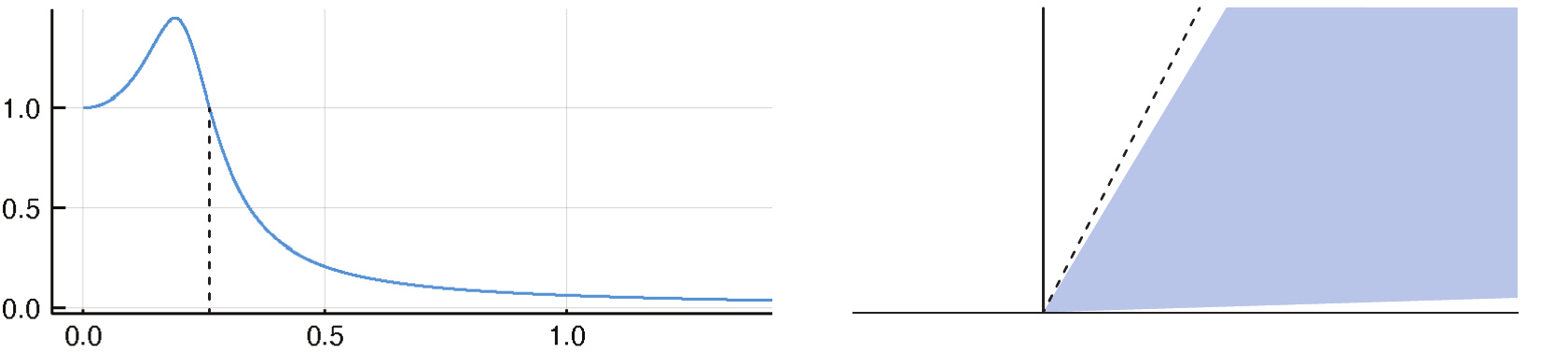}.

\begin{figure}
\vspace{.3cm}
\begin{center}
 \begin{overpic}[width=1\textwidth]{Ex1.pdf}
  \put(50.5,2){\tiny $k$}
  \put(0.6,23){\tiny $|r_1(k)|$}
  \put(97.5,2){\tiny $x$}
  \put(64,0.5){\tiny $(0,0)$}
 \put(65.7,22.8){\tiny $t$}
 \put(74,13){\tiny asymptotic sector}
   \end{overpic}
     \begin{figuretext}\label{Ex1.pdf}
       Numerical example showing $|r_1(k)|$ as a function of $k \geq 0$ (left) and the corresponding asymptotic sector in the $(x,t)$-plane (right) for a choice of initial data such that $\zeta_0 = 0$.
     \end{figuretext}
     \end{center}
\end{figure}

\begin{figure}
\begin{center}
 \begin{overpic}[width=1\textwidth]{Ex2.pdf}
  \put(50.5,2){\tiny $k$}
  \put(12.3,0.2){\tiny $\frac{\zeta_0}{2}$}
  \put(0.6,23.3){\tiny $|r_1(k)|$}
  \put(98,2){\tiny $x$}
  \put(64,0.5){\tiny $(0,0)$}
 \put(66.2,23){\tiny $t$}
   \put(73.2,19){\makebox(0,0){\rotatebox{63}{{\tiny $\zeta = \zeta_0$}}}}
 \put(78,13){\tiny asymptotic sector}
   \end{overpic}
     \begin{figuretext}\label{Ex2.pdf}
       Numerical example showing $|r_1(k)|$ as a function of $k \geq 0$ (left) and the corresponding asymptotic sector in the $(x,t)$-plane (right) for a choice of initial data such that $\zeta_0 > 0$.
     \end{figuretext}
     \end{center}
\end{figure}

\begin{theorem}[Long-time asymptotics for (\ref{boussinesqsystem})]\label{mainth}
Suppose $\{u(x,t), v(x,t)\}$ is a Schwartz class solution of (\ref{boussinesqsystem}) with initial data $u_0, v_0 \in \mathcal{S}(\R)$ such that Assumptions \ref{solitonlessassumption} and \ref{originassumption} hold. 
Define $\zeta_0 \geq 0$ by (\ref{zeta0def}). 
Then the following asymptotic formula holds uniformly for $\zeta = x/t$ in compact subsets of $(\zeta_0,\infty)$ as $t \to \infty$:
\begin{align}\nonumber
u(x,t) = & -\frac{3^{5/4}k_0\sqrt{\nu}}{\sqrt{2t}}  \sin\bigg(\frac{19\pi}{12}+\nu\ln\big(6\sqrt{3}tk_0^2\big) - \sqrt{3}k_0^2t
-\arg{r_1(k_0)} -\arg{\Gamma(i\nu)}
	\\ \label{uasymptotics}
& + \frac{1}{\pi }\int_{k_0}^{\infty} \ln\bigg|\frac{s-k_0}{s-\omega k_0}\bigg|\, d\ln(1 - |r_1(s)|^2)\bigg) + O\bigg(\frac{\ln t}{t}\bigg),
\end{align}
where $\Gamma$ denotes the Gamma function, $k_0 \equiv k_0(\zeta) = \zeta/2$, 
and $\nu \equiv \nu(\zeta) \geq 0$ is defined by
\begin{align*}
\nu = - \frac{1}{2\pi}\ln(1-|r_1(k_{0})|^{2}).
\end{align*}
\end{theorem}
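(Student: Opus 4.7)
The strategy is a Deift--Zhou steepest-descent analysis of the regularized $3\times 3$ RH problem to be introduced in Section~\ref{overviewsec}, followed by a transfer step that converts the regularized asymptotics into information about the original singular problem; the function $u(x,t)$ is then recovered from the $k^{-1}$ coefficient of the RH solution at $k=\infty$ via the reconstruction formula of \cite{CharlierLenells}.

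First I would analyze the phase structure. The jumps involve exponentials $e^{t\Phi_{ij}(\zeta,k)}$ with $\zeta=x/t$; the phases $\Phi_{ij}$ are combinations of $l_p(k)$ and $l_p(k)^2$ coming from the $(x,t)$-dependence of the Lax pair, and solving $\partial_k\Phi_{ij}=0$ places the relevant saddles at the six points $\pm k_0,\pm\omega k_0,\pm\omega^2 k_0$ with $k_0=\zeta/2$. Mapping the signature of $\re\Phi_{ij}$ in each sector $D_n$ of Figure~\ref{Gamma.pdf} then determines, for every off-diagonal entry, the direction in which its jump factor should be deformed off $\Gamma$.

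Next I would carry out the two standard transformations. The first conjugates the RH problem by a diagonal matrix $\Delta(k)$ whose entries are scalar Cauchy integrals of $\ln(1-|r_1|^2)$ and its $\omega$-rotated analogues, taken over the arcs of $\Gamma$ joining each active saddle to infinity; this produces a Beals--Coifman factorization of each jump as a product of triangular matrices whose off-diagonal entries extend analytically into the region where the corresponding $\Phi_{ij}$ has the right sign. The second transformation opens lenses along the steepest-descent rays through the three active saddles $k_0,\omega k_0,\omega^2 k_0$ (the other three are identified with these by the $\Z_3$ symmetry of the problem), yielding a deformed RH problem whose jumps are exponentially small outside shrinking disks around these points. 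Inside each disk a linear change of variable of the form $z=c\sqrt{t}\,(k-k_0)$ (with $c$ fixed by the quadratic part of $\Phi$ at the saddle) reduces the RH problem to the classical parabolic cylinder model; its explicit solution produces the $\Gamma(i\nu)$ factor and the oscillation $(6\sqrt{3}\, tk_0^2)^{i\nu}\,e^{-i\sqrt 3 k_0^2 t}$ appearing in (\ref{uasymptotics}). Constructing a global model from the three local parametrices and running a standard small-norm estimate on the ratio of the deformed RH solution to this model gives an error of order $O(\ln t/t)$; combining the three saddle contributions (the integral $\tfrac{1}{\pi}\int_{k_0}^\infty \ln|(s-k_0)/(s-\omega k_0)|\,d\ln(1-|r_1|^2)$ in (\ref{uasymptotics}) arises from the value of $\Delta$ at $k_0$) and reading off the $k^{-1}$ coefficient of the RH solution yields the stated asymptotics.

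I expect two main obstacles. The first is the $3\times 3$ bookkeeping: the substitute here for the familiar NLS unitarity identity $|r|^2+|T|^2=1$ is a family of algebraic relations among the entries of $s$ and $s^A$ forced by the $\Z_3$ symmetry of the Lax pair, and these must be used carefully to show that a \emph{single} choice of $\Delta$ produces analytic triangular factors on all six rays of $\Gamma$ simultaneously, with every auxiliary sign of $\re\Phi_{ij}$ pointing the right way in the lens region. The second is the transfer back to the singular RH problem: the double poles of $1/(s(k))_{11}$ and $1/(s^A(k))_{11}$ at $k=0$ guaranteed by Assumption~\ref{originassumption} must be shown to contribute only at order $O(1/t)$ or better on the sector $\zeta>\zeta_0$, so that they are absorbed into the error term of (\ref{uasymptotics}) and do not interfere with the parabolic-cylinder leading order.
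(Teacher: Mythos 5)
Your proposal follows essentially the same route as the paper: analytic approximation of the reflection coefficients, conjugation by a diagonal matrix built from scalar Cauchy integrals of $\ln(1-|r_1|^2)$ and its rotations, lens opening, a parabolic-cylinder parametrix at the saddle points with the rescaling $z = 3^{1/4}\sqrt{2t}(k-k_0)$, a small-norm estimate, and recovery of $u$ from the $k^{-1}$ coefficient at infinity via the reconstruction formula of \cite{CharlierLenells}. Two details in your sketch are inaccurate, though neither derails the overall argument. First, each phase $\Phi_{ij}(\zeta,k)$ is quadratic in $k$, so $\partial_k\Phi_{ij}=0$ has a \emph{single} root; there are exactly three critical points $k_0$, $\omega k_0$, $\omega^2 k_0$ with $k_0=\zeta/2$, not six, and the $\Z_3$ symmetry $k\mapsto \omega k$ identifies these three with one another (so only the parametrix at $k_0$ is built explicitly) --- there is no saddle at $-k_0$. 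Second, the transfer to the singular problem is not a matter of showing that the double poles at $k=0$ contribute at order $O(1/t)$: the jump matrix of the RH problem actually analyzed is already regular at the origin (the singular behavior is absorbed in the companion paper's derivation of the row-vector problem and the reconstruction formula), and the role of Assumption \ref{originassumption} in the asymptotic analysis is that it forces $r_1(0)=\omega$ and $r_2(0)=1$, whence the cancellation $r_1(0)r_2^{*}(0)+r_1^{*}(0)+r_{2}(0)=\omega+\bar\omega+1=0$; this vanishing is what makes the deformed jump decay on the piece of contour passing through the origin. Finally, since the reconstruction formula involves an $x$-derivative of the residue at infinity, every bound in the scheme (on the analytic approximations, on $\delta_1$, on the local parametrix, and in the small-norm expansion) must also be established for $\partial_x$ of the relevant quantities; your sketch omits this, and it accounts for a substantial share of the technical work.
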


The proof of Theorem \ref{mainth} is presented in Sections \ref{overviewsec}-\ref{uasymptoticssec}; Section \ref{overviewsec} contains an overview of the proof.

As a corollary, we obtain asymptotics of the solution of (\ref{boussinesq}) with initial data $u_0(x) = u(x,0)$ and $u_1(x) = u_t(x,0)$.

\begin{corollary}[Long-time asymptotics for (\ref{boussinesq})]\label{maincor}
Suppose $u(x,t)$ is a Schwartz class solution of the ``good'' Boussinesq equation (\ref{boussinesq}) with initial data $u_0, u_1 \in \mathcal{S}(\R)$ such that $\int_\R u_1 dx = 0$. Let $v_0(x) = \int_{-\infty}^x u_1(x') dx'$ and define $r_1:(0,\infty) \to \C$ by (\ref{r1def}). Suppose Assumptions \ref{solitonlessassumption} and \ref{originassumption} hold. Then $u$ obeys the asymptotic formula (\ref{uasymptotics}) as $t \to \infty$ uniformly for $\zeta = x/t$ in compact subsets of $(\zeta_0,\infty)$, where $\zeta_0 \geq 0$ is given by (\ref{zeta0def}).
\end{corollary}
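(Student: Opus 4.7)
The plan is to reduce Corollary~\ref{maincor} to Theorem~\ref{mainth} by recasting a Schwartz solution of the scalar equation (\ref{boussinesq}) as a Schwartz solution of the first-order system (\ref{boussinesqsystem}). Given initial data $u_0, u_1 \in \mathcal{S}(\R)$ with $\int u_1\,dx = 0$, the natural definition
\begin{align*}
v_0(x) := \int_{-\infty}^{x} u_1(x')\,dx'
\end{align*}
produces a function in $\mathcal{S}(\R)$: rapid decay as $x\to -\infty$ is immediate since $u_1$ is Schwartz, and as $x \to +\infty$ the zero-mean condition lets one rewrite $v_0(x) = -\int_{x}^{\infty} u_1\,dx'$, which is rapidly decaying there as well; higher derivatives $\partial_x^n v_0 = \partial_x^{n-1} u_1$ are Schwartz for $n \geq 1$.

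Given the Schwartz solution $u(x,t)$ of (\ref{boussinesq}), I would set $v(x,t) := \int_{-\infty}^{x} u_t(x',t)\,dx'$ and verify that $\{u,v\}$ is a Schwartz class solution of (\ref{boussinesqsystem}) with initial data $(u_0, v_0)$. The relation $u_t = v_x$ holds by construction, and using (\ref{boussinesq}) one computes
\begin{align*}
v_t(x,t) = \int_{-\infty}^{x} u_{tt}(x',t)\,dx' = -\tfrac{4}{3}(u^2)_x - \tfrac{1}{3} u_{xxx},
\end{align*}
where the rapid decay of $u$ and its derivatives at $-\infty$ kills the boundary terms. The initial condition $v(x,0) = v_0(x)$ holds automatically, and verifying the required Schwartz bounds on $v$ in $x$ (uniformly on $[0,T)$) reduces to applying the same reasoning used for $v_0$ together with the hypothesized Schwartz bounds on $u$.

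The main technical point---and essentially the only thing that could go wrong---is ensuring that $v(\cdot,t)$ decays rapidly as $x \to +\infty$ for every $t \geq 0$. By the argument used for $v_0$, this reduces to showing that $\int_\R u_t(x,t)\,dx = 0$ for all $t$. I plan to establish this by differentiating twice and invoking (\ref{boussinesq}):
\begin{align*}
\frac{d^2}{dt^2}\int_\R u(x,t)\,dx = \int_\R u_{tt}\,dx = -\int_\R \left[\tfrac{4}{3}(u^2)_{xx} + \tfrac{1}{3}u_{xxxx}\right] dx = 0,
\end{align*}
since the integrand is a total $x$-derivative of a Schwartz function. Hence $\int u(x,t)\,dx$ is affine in $t$ with initial slope $\int u_1\,dx = 0$, so it is constant, and therefore $\int u_t\,dx = 0$ for all $t$, as required.

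Once $\{u,v\}$ has been identified as a Schwartz class solution of (\ref{boussinesqsystem}) with initial data $(u_0, v_0)$, Theorem~\ref{mainth} applies directly and delivers the asymptotic formula (\ref{uasymptotics}) uniformly for $x/t$ in compact subsets of $(\zeta_0,\infty)$. No additional spectral-theoretic obstacle arises: the reflection coefficient $r_1$ from (\ref{r1def}) is determined by $(u_0, v_0)$ in exactly the same way in both formulations, so Assumptions~\ref{solitonlessassumption} and \ref{originassumption} transfer verbatim and the sector boundary $\zeta_0$ is defined identically.
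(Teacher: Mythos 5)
Your proposal is correct and follows the same route the paper intends: the corollary is presented as an immediate consequence of the stated equivalence between (\ref{boussinesq}) and the system (\ref{boussinesqsystem}) under the zero-mean condition (\ref{u1zeromean}), which is exactly the reduction you carry out. Your identification of the conservation of $\int_\R u\,dx$ (hence $\int_\R u_t\,dx = 0$ for all $t$) as the key point guaranteeing rapid decay of $v(\cdot,t)$ at $+\infty$ is the right way to make that equivalence rigorous.
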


\begin{remark}[The asymptotic sector]\upshape
Theorem \ref{mainth} provides information on the solution of (\ref{boussinesqsystem}) in the asymptotic sector $x/t > \zeta_0$, where $\zeta_0$ is defined in (\ref{zeta0def}). For a large class of initial data, it holds that $\zeta_0 = 0$ and in that case the asymptotic formula (\ref{uasymptotics}) applies to any subsector of the right half-plane $\{x > 0, t > 0\}$, see Figure \ref{Ex1.pdf}.
But there are other choices of the initial data for which $|r_1(k)| > 1$ for some $k > 0$, and then the asymptotic formula (\ref{uasymptotics}) applies only in subsectors of $x/t > \zeta_0$ where $\zeta_0 > 0$, see Figure \ref{Ex2.pdf}.
\end{remark}

\begin{remark}[Comparison with KdV]\upshape
The fact that the reflection coefficient $r_1(k)$ may have absolute value greater than $1$ is an interesting feature of the Boussinesq equation, which sets it apart from for example the KdV equation. For the KdV equation, the reflection coefficient can be defined by $r_{KdV} = s_{12}/s_{11}$ where the scattering matrix $s(k)$ satisfies $\det s = 1$, $s_{21} = \overline{s_{12}}$, and $s_{22} = \overline{s_{11}}$; thus $|r_{KdV}(k)|^2 = 1 - |s_{11}(k)|^{-2} \leq 1$ for real $k$ and equality can hold only at $k = 0$; in fact, $|r_{KdV}(0)| = 1$ for generic initial data, cf. \cite{DVZ1994}. 
The definition (\ref{r1def}) of the reflection coefficient $r_1(k)$ associated with the Boussinesq equation (\ref{boussinesq}) also satisfies $|r_1(0)| = 1$ generically. However, since $r_1(k)$ is defined in terms of the entries of a $3 \times 3$ matrix, the unit determinant condition $\det s = 1$ is, in general, not sufficient to force $|r_1| \leq 1$. In fact, it is easy to construct numerical examples for which this inequality fails, see e.g. Figure  \ref{Ex2.pdf}. 
\end{remark}

\begin{remark}[Asymptotics in the left half-plane]\label{zetanegativeremark}\upshape
In Theorem \ref{mainth}, we have, for conciseness, only presented asymptotics of $u(x,t)$ in a subsector of the right-half plane $x > 0$. A similar formula can be derived by the same methods for a subsector of the left half-plane, except that the formulas there involve $r_2 := s_{12}^A/s_{11}^A$ instead of $r_1$. Alternatively, asymptotics in the left half-plane can be obtained directly from Theorem \ref{mainth} and the invariance of the Boussinesq equation under space inversion. 
\end{remark}

\begin{remark}[Asymptotics of $v$]\upshape
Theorem \ref{mainth} provides a formula for the asymptotics of $u$. Our methods can be used to derive an analogous asymptotic formula for $v$, but since this requires somewhat lengthy estimates of $t$-derivatives (see (\ref{recoveruvn})), we have decided to not include this. 
\end{remark}

\subsection{Notation}
We summarize some notation that will be used throughout the paper. In what follows, $\gamma \subset \C$ denotes an oriented (piecewise smooth) contour.

\begin{enumerate}[$-$]

\item If $A$ is an $n \times m$ matrix, then $|A|\geq 0$ is defined by
$|A|^2 = \sum_{i,j} |A_{ij}|^2$. Note that $|A + B| \leq |A| + |B|$ and $|AB| \leq |A| |B|$.

\item $c$ and $C$ will denote generic positive constant which may change within a computation.

\item We write $\R_+ = (0, \infty)$ and $\R_- = (-\infty,0)$.

\item For $1 \leq p \leq \infty$, we write $A \in L^p(\gamma)$  if $|A|$ belongs to $L^p(\gamma)$. Then $A \in L^p(\gamma)$ iff each entry $A_{ij}$ belongs to $L^p(\gamma)$. We write $\|A\|_{L^p(\gamma)} := \| |A|\|_{L^p(\gamma)}$. 

\item We define $\dot{L}^{3}(\gamma)$ as the space of all functions $f:\gamma \to \mathbb{C}$ such that $(1+|k|)^{\frac{1}{3}}f(k) \in L^{3}(\gamma)$. If $\gamma$ is bounded, $\dot{L}^{3}(\gamma) = L^{3}(\gamma)$, but in general it only holds that $\dot{L}^{3}(\gamma) \subset L^{3}(\gamma)$. We turn $\dot{L}^{3}(\gamma)$ into a Banach space with the norm $\| f \|_{\dot{L}^{3}(\gamma)} := \| (1+|k|)^{1/3}f \|_{L^{3}(\gamma)}$.

\item We let $\dot{E}^3(\C\setminus \gamma)$ denote the space of all analytic functions $f:\C\setminus \gamma \to \C$ with the property that for each component $D$ of $\C\setminus \gamma$ there exist curves $\{C_n\}_1^\infty$ in $D$ such that the $C_n$ eventually surround each compact subset of $D$ and \begin{align*}
\sup_{n\geq 1} \int_{C_{n}} (1+|k|) \, |f(k)|^{3}|dk| < \infty.
\end{align*}

\item For a function $f$ defined in $\mathbb{C}\setminus \gamma$, we let $f_{\pm}$ denote the nontangential boundary values of $f$ from the left and right sides of $\gamma$, respectively, whenever they exist. If $f \in \dot{E}^{3}(\mathbb{C}\setminus \gamma)$, then $f_{\pm}$ exist a.e. on $\gamma$ and $f_{\pm} \in \dot{L}^{3}(\gamma)$ (see \cite[Theorem 4.1]{LenellsCarleson}).

\end{enumerate}

\section{Overview of the proof}\label{overviewsec}
The proof of Theorem \ref{mainth} consists of a Deift--Zhou steepest descent analysis of a $3 \times 3$ matrix RH problem. The jump contour $\Gamma$ of this RH problem consists of the three lines $\R \cup \omega \R \cup \omega^2 \R$, see Figure \ref{Gamma.pdf}, and the jump matrix $v$ is given explicitly in terms of $r_1(k)$ defined in (\ref{r1def}) and the function $r_2(k)$ defined by
\begin{align}\label{r2def}
r_2(k) = \frac{(s^A(k))_{12}}{(s^A(k))_{11}}, \qquad k \in (-\infty,0).
\end{align}
More precisely, $v$ is defined as follows. Define $\{l_j(k), z_j(k)\}_{j=1}^3$ by
\begin{align}
&l_j(k) = \omega^j k, \quad z_j(k) = \omega^{2j} k^{2}, \qquad k \in \C,
\end{align}
and define the complex-valued functions $\Phi_{ij}(\zeta, k)$ for $1 \leq i \neq j \leq 3$ by
\begin{align*}
\Phi_{ij}(\zeta,k) = (l_{i}-l_{j})\zeta + (z_{i}-z_{j}),
\end{align*}
where $\zeta := x/t$. By symmetry, it is enough to consider $\Phi_{21}, \Phi_{31}$, and $\Phi_{32}$, which are explicitly given by
\begin{align*}
& \Phi_{21}(\zeta,k) =  \omega(\omega-1) k (\zeta - k), \\
& \Phi_{31}(\zeta,k) = (1-\omega) k(\zeta - \omega^{2}k), \\
& \Phi_{32}(\zeta,k) = (1-\omega^{2}) k(\zeta - \omega k).
\end{align*}
Given a function $f(k)$ of $k \in \C$, we let $f^*$ denote the Schwartz conjugate of $f$, i.e.,
$$f^*(k) = \overline{f(\bar{k})}.$$
The jump matrix $v(x,t,k)$ is defined for $k \in \Gamma$ by
\begin{align}\nonumber
&  v_1 = 
  \begin{pmatrix}  
 1 & - r_1(k)e^{-t\Phi_{21}} & 0 \\
  r_1^*(k)e^{t\Phi_{21}} & 1 - |r_1(k)|^2 & 0 \\
  0 & 0 & 1
  \end{pmatrix},
\quad  v_2 = 
  \begin{pmatrix}   
 1 & 0 & 0 \\
 0 & 1 - |r_2(\omega k)|^2 & -r_2^*(\omega k)e^{-t\Phi_{32}} \\
 0 & r_2(\omega k)e^{t\Phi_{32}} & 1 
    \end{pmatrix},
   	\\ \nonumber
  &v_3 = 
  \begin{pmatrix} 
 1 - |r_1(\omega^2 k)|^2 & 0 & r_1^*(\omega^2 k)e^{-t\Phi_{31}} \\
 0 & 1 & 0 \\
 -r_1(\omega^2 k)e^{t\Phi_{31}} & 0 & 1  
  \end{pmatrix},
\quad  v_4 = 
  \begin{pmatrix}  
  1 - |r_2(k)|^2 & -r_2^*(k) e^{-t\Phi_{21}} & 0 \\
  r_2(k)e^{t\Phi_{21}} & 1 & 0 \\
  0 & 0 & 1
   \end{pmatrix},
   	\\ \label{vdef}
&  v_5 = 
  \begin{pmatrix}
  1 & 0 & 0 \\
  0 & 1 & -r_1(\omega k)e^{-t\Phi_{32}} \\
  0 & r_1^*(\omega k)e^{t\Phi_{32}} & 1 - |r_1(\omega k)|^2
  \end{pmatrix},
\quad v_6 = 
  \begin{pmatrix} 
  1 & 0 & r_2(\omega^2 k)e^{-t\Phi_{31}} \\
  0 & 1 & 0 \\
  -r_2^*(\omega^2 k)e^{t\Phi_{31}} & 0 & 1 - |r_2(\omega^2 k)|^2
   \end{pmatrix},
\end{align}
where $v_j$ denotes the restriction of $v$  to the subcontour of $\Gamma$ labeled by $j$ in Figure \ref{Gamma.pdf}.
We consider the following RH problem, which is formulated in the $L^3$-setting to ensure uniqueness (the solution of an $n \times n$-matrix $L^p$-RH problem is unique whenever it exists provided that $1 \leq n \leq p$, see \cite[Theorem 5.6]{LenellsCarleson}). 

\begin{RHproblem}[$L^{3}$-RH problem for $m$]\label{RHm}
Find a $3 \times 3$-matrix valued function $m(x,t,\cdot) \in I + \dot{E}^{3}(\mathbb{C}\setminus \Gamma)$ such that $m_+(x,t,k) = m_-(x, t, k) v(x, t, k)$ for a.e. $k \in \Gamma$.
\end{RHproblem}

By introducing the row-vector-valued function $n$ by
\begin{align}\label{ndef}
n(x,t,k) = \begin{pmatrix}\omega & \omega^2 & 1 \end{pmatrix} m(x,t,k),
\end{align}
we can transform the RH problem for $m$ into the following vector RH problem for $n$.  

\begin{RHproblem}[$L^{3}$-RH problem for $n$]\label{RHnL3}
Find a $1 \times 3$-row-vector valued function $n(x,t,\cdot) \in (\omega,\omega^{2},1) + \dot{E}^{3}(\mathbb{C}\setminus \Gamma)$ such that $n_+(x,t,k) = n_-(x, t, k) v(x, t, k)$ for a.e. $k \in \Gamma$.
\end{RHproblem}

For technical reasons, we also need the classical version of this RH problem.
\begin{RHproblem}[Classical RH problem for $n$]\label{RHnclassical}
Find a $1 \times 3$-row-vector valued function $n(x,t,k)$ with the following properties:
\begin{enumerate}[$(a)$]
\item $n(x,t,\cdot) : \C \setminus \Gamma \to \mathbb{C}^{1 \times 3}$ is analytic.

\item The limits of $n(x,t,k)$ as $k$ approaches $\Gamma \setminus \{0\}$ from the left and right exist, are continuous on $\Gamma \setminus \{0\}$, and are denoted by $n_+$ and $n_-$, respectively. Furthermore, they are related by
\begin{align}\label{njump}
  n_+(x,t,k) = n_-(x, t, k) v(x, t, k), \qquad k \in \Gamma \setminus \{0\}.
\end{align}

\item $n(x,t,k) = (\omega,\omega^{2},1) + O(k^{-1})$ as $k \to \infty$.

\item $n(x,t,k) = O(1)$ as $k \to 0$.
\end{enumerate}
\end{RHproblem}

The following result was proved in \cite{CharlierLenells}. 

\begin{proposition}\cite{CharlierLenells}\label{nprop}
Suppose the assumptions of Theorem \ref{mainth} hold. Let $U$ be an open subset of $\R \times [0,\infty)$ and suppose for each $(x,t) \in U$ that the solution of the classical RH problem \ref{RHnclassical} for $n$ is unique whenever it exists. Then RH problem \ref{RHnclassical} has a unique solution $n(x,t,k)$ for each $(x,t) \in U$ and the solution $\{u(x,t), v(x,t)\}$ of (\ref{boussinesqsystem}) can be expressed in terms of $n = (n_1, n_2, n_3)$ by
\begin{align}\label{recoveruvn}
\begin{cases}
u(x,t) = -\frac{3}{2}\frac{\partial}{\partial x}\lim_{k\to \infty}k(n_{3}(x,t,k) - 1), \\
v(x,t) = -\frac{3}{2}\frac{\partial}{\partial t}\lim_{k\to \infty}k(n_{3}(x,t,k) - 1),
\end{cases} \quad (x,t) \in U.
\end{align}
\end{proposition}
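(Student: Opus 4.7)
The plan is to treat Proposition \ref{nprop} as the ``inverse map'' completing the inverse scattering transform developed in \cite{CharlierLenells}, and I would proceed in three stages.

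First I would construct an explicit candidate $n(x,t,k)$ for the classical RH problem \ref{RHnclassical} directly from the spectral data. The analysis of \cite{CharlierLenells} provides sectionally analytic $3 \times 3$ matrix eigenfunctions of the Lax pair, one analytic in each $D_j$, whose boundary values on $\Gamma$ are related by factorizations of $s(k)$ and $s^A(k)$ that precisely reproduce the jump matrix $v(x,t,k)$ in \eqref{vdef}. Assembling these eigenfunctions into a single sectionally analytic matrix $m(x,t,k)$ and setting $n = (\omega, \omega^2, 1)\, m$ yields a candidate satisfying the jump relation (b) and, by the Volterra estimates for \eqref{XXAdef intro}, the normalization $n \to (\omega, \omega^2, 1)$ at infinity required by (c); analyticity (a) is built in. Under the hypothesis that RH problem \ref{RHnclassical} has at most one solution for each $(x,t) \in U$, this candidate is the unique one.

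Second I would upgrade this classical solution to the $L^3$-setting of RH problem \ref{RHnL3}. It suffices to check that $n - (\omega, \omega^2, 1) \in \dot E^3(\C \setminus \Gamma)$; the required bound $\sup_n \int_{C_n} (1+|k|)\,|n - (\omega, \omega^2, 1)|^3\, |dk| < \infty$ along exhausting contours in each $D_j$ follows from the $O(1/k)$ sectorial decay of $m - I$ furnished by iteration of the Volterra equations together with the Schwartz-class assumption on $u_0, v_0$. Uniqueness of the $L^3$ problem is then automatic by \cite[Theorem 5.6]{LenellsCarleson}.

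For the reconstruction formulas \eqref{recoveruvn}, I would expand $n(x,t,k) = (\omega, \omega^2, 1) + k^{-1} n^{(1)}(x,t) + k^{-2} n^{(2)}(x,t) + O(k^{-3})$ and substitute into the $x$-part of the Lax equation satisfied by $m$, whose leading term is $\mathcal{L}(k) = \diag(\omega k, \omega^2 k, k)$ and whose subleading part is built from $\mathsf{U}(x,k)$ in \eqref{mathsfUdef intro}. Matching coefficients at orders $k^0$ and $k^{-1}$, and using the explicit form of $P(k)$ in \eqref{Pdef intro}, yields a direct relation of the form $\partial_x n_3^{(1)}(x,t) = -\tfrac{2}{3}\,u(x,t)$, which rearranges to the first line of \eqref{recoveruvn}. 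An analogous computation with the $t$-part of the Lax pair (leading symbol a $k^2$-diagonal matrix) produces the formula for $v$, with consistency of the two expressions reflecting the compatibility relation $u_t = v_x$ in \eqref{boussinesqsystem}. The main obstacle will be verifying the boundedness condition (d) at $k=0$: Assumption \ref{originassumption} forces $s_{11}$ and $s^A_{11}$ to have genuine double poles at the origin, so individual entries of $m$ blow up there, and one must show that in the particular linear combination $(\omega, \omega^2, 1)\, m$ --- exactly the first row of $P(k)$, the unique row carrying no positive power of $k$ --- all singularities cancel. This requires a careful local expansion of the scattering data near $k=0$ using the boundary identities (such as $r_1(0)=\omega$ and its counterpart for $r_2$) together with the residue structure of $P(k)^{-1}$ on the rays meeting at the origin; this singularity bookkeeping is the delicate core of the argument, while the decay at infinity and the reconstruction calculation are comparatively routine.
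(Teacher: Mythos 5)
The paper does not actually prove Proposition \ref{nprop}: it is imported from the companion work \cite{CharlierLenells} (``The following result was proved in \cite{CharlierLenells}''), so there is no in-paper argument to compare against. Your outline is the standard direct/inverse scattering route that such a proof follows: because $\{u,v\}$ is \emph{assumed} to be a Schwartz class solution, the Volterra eigenfunctions exist for every $(x,t)\in U$ and assemble into a sectionally analytic $m$ whose boundary values reproduce the jump \eqref{vdef}, which settles existence, while uniqueness is a hypothesis of the proposition; the reconstruction formulas then come from matching the $O(k^{-1})$ coefficients in the $x$- and $t$-parts of the Lax pair (leading symbols $\mathcal{L}$ and the $k^2$-diagonal matrix built from the $z_j$). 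You have also correctly isolated the genuinely delicate point, condition $(d)$: under Assumption \ref{originassumption} the individual rows and columns of $m$ are singular at $k=0$, and only the combination $(\omega,\omega^2,1)m$ --- the first row of $P(k)$ --- stays bounded, which requires the local expansion of the scattering data at the origin together with $r_1(0)=\omega$, $r_2(0)=1$. Two minor remarks: the $\dot{E}^3$ upgrade in your second stage is the content of Lemma \ref{nlemma} rather than of this proposition, and the constant in $\partial_x n_3^{(1)}=-\tfrac{2}{3}u$ cannot be verified from the outline alone, though the structure of that computation is the right one.
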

 
To use Proposition \ref{nprop}, we need the following lemma. 

\begin{lemma}\label{nlemma}
Suppose RH problem \ref{RHm} has a solution $m(x,t,\cdot)$ at some point $(x,t) \in \R \times [0,\infty)$. Then $n = (\omega, \omega^2, 1)m$ is the unique solution of RH problem \ref{RHnL3} at $(x,t)$. Moreover, if the solution of RH problem \ref{RHnclassical} exists, then it is unique and is given by $n = (\omega, \omega^2, 1)m$.
\end{lemma}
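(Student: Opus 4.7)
The plan is to split the statement into three parts: (a) verify that $n:=(\omega,\omega^{2},1)m$ solves RH problem \ref{RHnL3}; (b) invoke the general uniqueness theorem to conclude that this $n$ is the unique $L^{3}$-solution; (c) show that any classical solution of RH problem \ref{RHnclassical} is automatically an $L^{3}$-solution of RH problem \ref{RHnL3}, so it coincides with $n$ by (b).

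For (a), analyticity of $n$ in $\C\setminus\Gamma$ is inherited from $m$, and
\begin{align*}
n - (\omega,\omega^{2},1) = (\omega,\omega^{2},1)\,(m-I),
\end{align*}
so the $\dot{E}^{3}$-condition on the right is a trivial consequence of the one on $m-I$ (multiplication by a constant row vector preserves the bound $\sup_n \int_{C_n}(1+|k|)|\cdot|^{3}|dk|$). The jump identity follows by left-multiplying $m_+=m_- v$ by $(\omega,\omega^{2},1)$ and using that the boundary values commute with this constant left factor.

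For (b), I would simply quote \cite[Theorem 5.6]{LenellsCarleson}: an $n\times m$-matrix $L^{p}$-RH problem has at most one solution whenever $n\leq p$. Here $n=1$ (row vector) and $p=3$, so $n$ is the unique solution of RH problem \ref{RHnL3}.

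For (c), the main (and only real) work is to convert a classical solution into an $L^{3}$-solution. Suppose $n^{\mathrm{cl}}$ solves RH problem \ref{RHnclassical}. I need to check that $f := n^{\mathrm{cl}}-(\omega,\omega^{2},1)$ lies in $\dot{E}^{3}(\C\setminus\Gamma)$; once this is done, the classical jump relation holds pointwise on $\Gamma\setminus\{0\}$ and hence a.e., so $n^{\mathrm{cl}}$ solves RH problem \ref{RHnL3}, and uniqueness from (b) forces $n^{\mathrm{cl}}=n$. By the classical conditions, $f$ is analytic in each component $D$ of $\C\setminus\Gamma$, satisfies $f(k)=O(k^{-1})$ as $k\to\infty$, and $f(k)=O(1)$ as $k\to 0$; furthermore, its boundary values from $D$ are continuous on $\Gamma\setminus\{0\}$. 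For the approximating contours I would take
\begin{align*}
C_{n} := \partial\bigl(D\cap\{\tfrac{1}{n}<|k|<n\}\bigr),
\end{align*}
which eventually surround any compact subset of $D$. The arc at $|k|=n$ contributes at most $Cn\cdot n^{-3}\cdot n=O(n^{-1})$ to $\int_{C_n}(1+|k|)|f|^{3}|dk|$, the arc at $|k|=1/n$ contributes at most $C\cdot 1\cdot 1\cdot n^{-1}=O(n^{-1})$, and the two radial segments (which sit on $\Gamma$, where the continuous boundary values of $f$ from $D$ are bounded near $0$ and $O(k^{-1})$ at infinity) contribute a bounded quantity because $(1+|k|)|f|^{3}$ is integrable on them. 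Therefore $\sup_{n}\int_{C_n}(1+|k|)|f|^{3}|dk|<\infty$, giving $f\in\dot{E}^{3}(\C\setminus\Gamma)$.

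I expect the only step that demands any real attention is (c); the delicate point is to justify the boundedness near $k=0$ (where the classical problem only requires $O(1)$) against the $\dot{E}^{3}$-weight $(1+|k|)$, but the cube in $|f|^{3}$ leaves plenty of room since $(1+|k|)\cdot|f|^{3}=O(1)$ there. Parts (a) and (b) are essentially tautological given the setup. The whole proof can be written in roughly half a page.
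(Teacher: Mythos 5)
Your parts (a) and (c) are essentially fine; in (c) you should take the curves $C_n$ strictly inside the component $D$ rather than equal to $\partial\bigl(D\cap\{\tfrac1n<|k|<n\}\bigr)$, since the radial portions of that boundary lie on $\Gamma=\partial D$ itself, but this is a routine adjustment using the continuity of the boundary values of $f$ on $\Gamma\setminus\{0\}$. (Incidentally, your (c) supplies detail that the paper's two-line proof omits.)

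The genuine gap is in part (b). The result you quote, \cite[Theorem 5.6]{LenellsCarleson}, gives uniqueness for \emph{square} $n\times n$ matrix $L^p$-RH problems with $n\leq p$; its proof forms the quotient $\tilde m m^{-1}$ of two solutions and applies a Liouville argument, which requires inverting a solution. RH problem \ref{RHnL3} is a $1\times 3$ \emph{row-vector} problem: a row vector has no inverse, so the theorem does not apply (reading ``$n=1$'' would correspond to a scalar problem, not a $1\times 3$ one), and uniqueness for vector RH problems is not automatic in general. This is exactly why the lemma is stated under the hypothesis that the \emph{matrix} problem \ref{RHm} has a solution $m$: the intended argument (that of \cite[Lemma A.5]{BLS2017}, which the paper cites) takes an arbitrary solution $\tilde n$ of RH problem \ref{RHnL3}, forms $\tilde n\, m^{-1}$ --- using $\det v=1$ to get $\det m\equiv 1$ and hence a controlled inverse, and the $\dot E^{3}$ hypotheses to handle the products of boundary values --- shows that $\tilde n\, m^{-1}$ has no jump across $\Gamma$ and tends to $(\omega,\omega^2,1)$ at infinity, and concludes $\tilde n=(\omega,\omega^2,1)m$ by a generalized Liouville theorem. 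Without this step, both the uniqueness claim and your conclusion in (c) that a classical solution must coincide with $n$ are unsupported.
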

\begin{proof}
The assertion for $n = (\omega, \omega^2, 1)m$ follows as in \cite[Lemma A.5]{BLS2017}. The last claim follows because every solution of RH problem \ref{RHnclassical} is also a solution of RH problem \ref{RHnL3}.
\end{proof}

It will follow from the steepest descent analysis that RH problem \ref{RHm} has a unique solution $m$ for $t \geq T$ and $x/t$ in a compact subset of $(\zeta_{0},\infty)$. Thus Proposition \ref{nprop} and Lemma \ref{nlemma} imply that the formulas (\ref{recoveruvn}) for $u,v$ are valid for all $t \geq T$ and $x/t$ in compact subsets of $(\zeta_{0},\infty)$ if $n$ is defined by $n = (\omega, \omega^2, 1)m$.
Therefore it is enough to determine the large $t$ asymptotics of $m$.

\subsection{Steepest descent analysis}
The large $t$ behavior of $m$ can be obtained by performing a Deift--Zhou steepest descent analysis of RH problem \ref{RHm}. The first step in this analysis is to define analytic approximations of the functions $r_1$ and $r_2$ appearing in the jump matrix $v$, as well as of the combination $r_1/(1 - |r_1|^2)$. Once these approximations are in place, we can deform the contour in such a way that the new jump is close to the identity matrix everywhere except near three critical points (see Section \ref{transsec}). The critical points are the solutions of the stationary phase equations $\partial \Phi_{21}/\partial k = 0$, $\partial \Phi_{31}/\partial k = 0$, and $\partial \Phi_{32}/\partial k = 0$. For each choice of $1 \leq j < i \leq 3$, $\partial \Phi_{ij}/\partial k = 0$ has a single zero $k_{ij}$ given by
\begin{align*}
k_{21} = \frac{\zeta}{2}, \qquad k_{31} = \frac{\omega \zeta}{2}, \qquad k_{32} = \frac{\omega^{2} \zeta}{2}.
\end{align*}
Writing $k_{0} \equiv k_{21}$, these three critical points can be expressed as $k_{0}$, $\omega k_{0}$, and $\omega^{2} k_{0}$, see Figure \ref{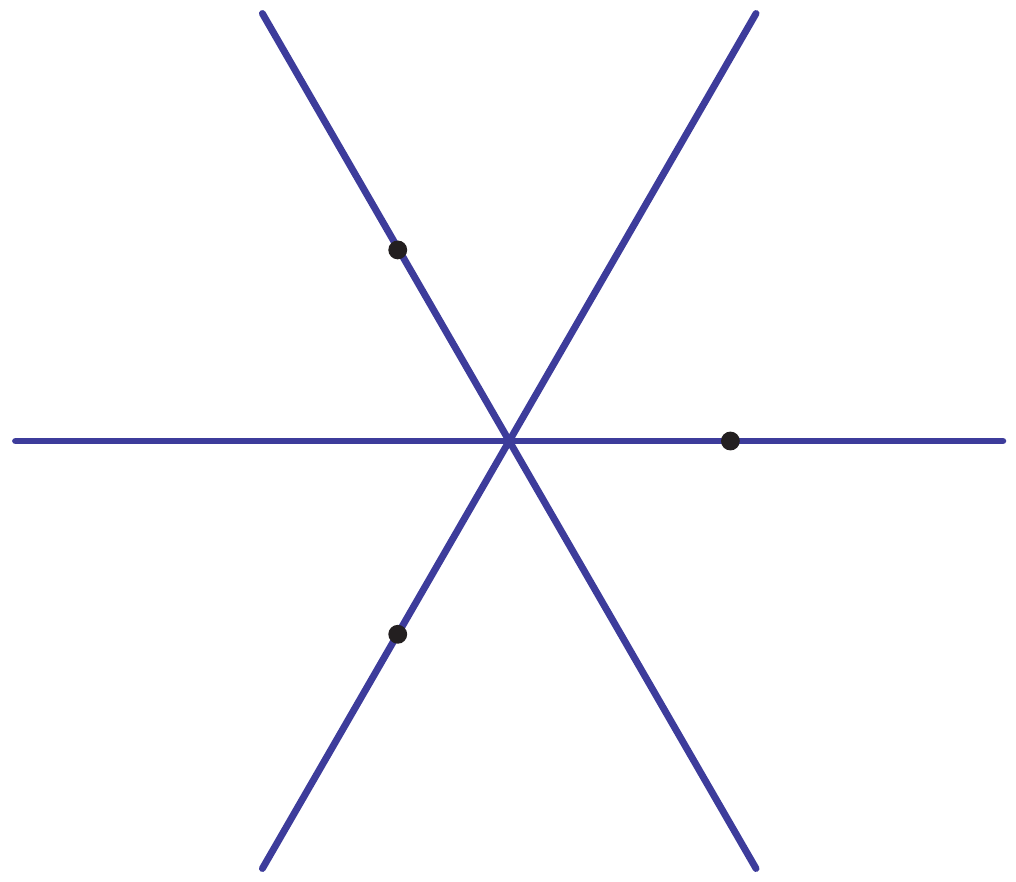}. The signature tables for $\Phi_{21}$, $\Phi_{31}$, and $\Phi_{32}$ are shown in Figures \ref{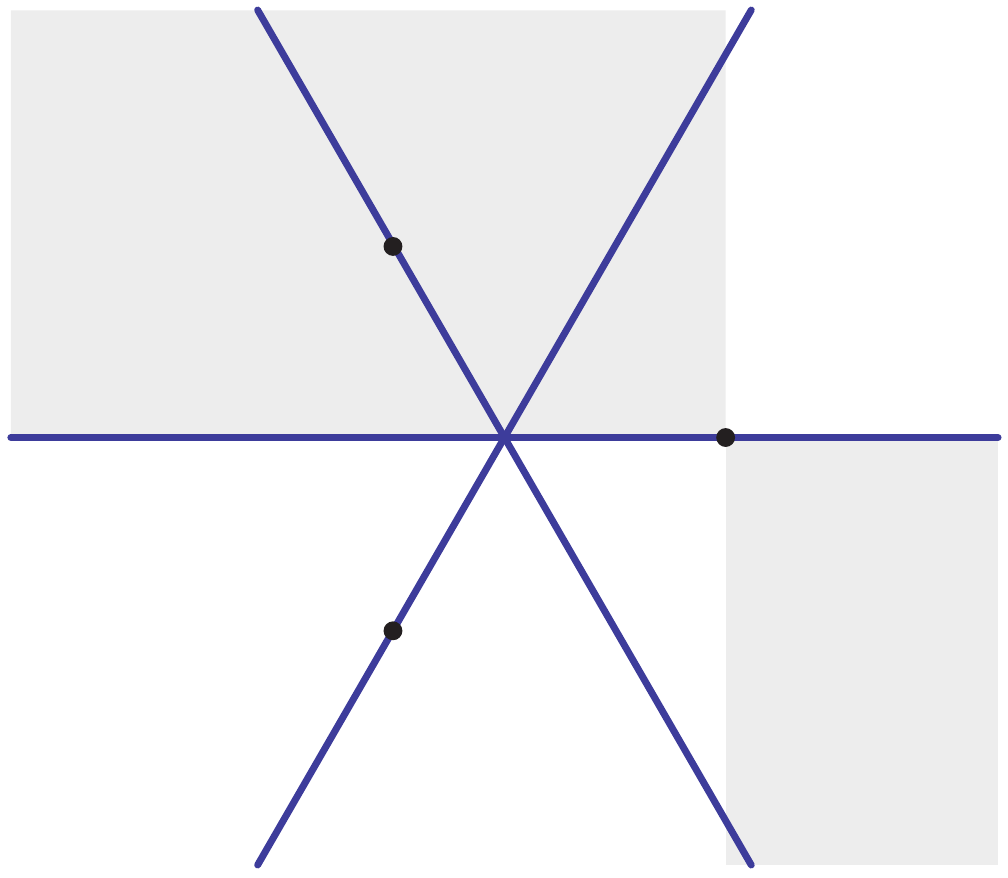}-\ref{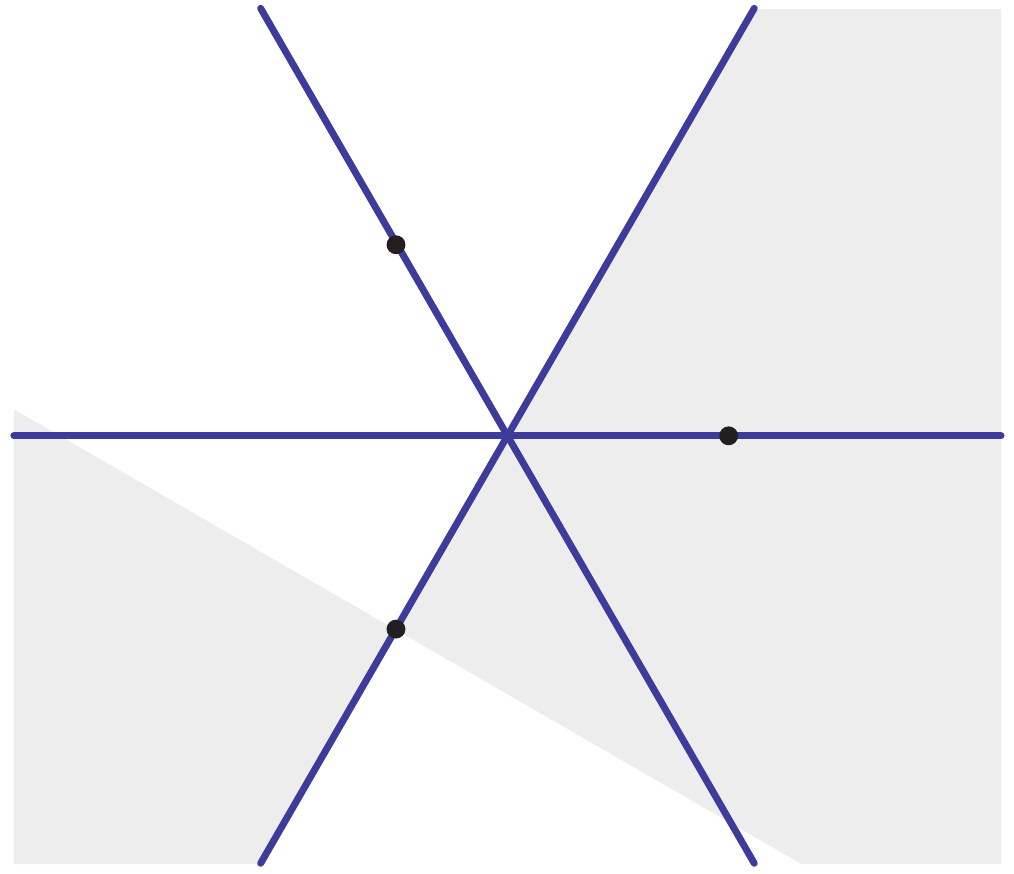}.
\begin{figure}
\begin{center}
\bigskip\bigskip
 \begin{overpic}[width=.55\textwidth]{criticalpoints2.pdf}
 \put(102,42){\small $\R$}
 \put(73,89){\small $\omega^2\R$}
 \put(21,89){\small $\omega\R$}
 \put(30,61){\small $\omega k_0$}
 \put(70,38){\small $k_0$}
 \put(27,24){\small $\omega^2 k_0$}
   \end{overpic}
   \begin{figuretext}\label{criticalpoints2.pdf}
      The three critical points $k_0, \omega k_0, \omega^2 k_0$ in the complex $k$-plane for $\zeta > 0$.
      \end{figuretext}
   \end{center}
\end{figure}

\begin{figure}
\begin{center}
 \begin{overpic}[width=.5\textwidth]{rePhi23.pdf}
 \put(102,42){\small $\R$}
 \put(39,74){\small $\re \Phi_{21} > 0$}
 \put(75,60){\small $\re \Phi_{21} < 0$}
 \put(30,61){\small $\omega k_0$}
 \put(70,38){\small $k_0$}
 \put(27,24){\small $\omega^2 k_0$}
   \end{overpic}
   \begin{figuretext}\label{rePhi23.pdf}
      The regions where $\re \Phi_{21} > 0$ (shaded) and $\re \Phi_{21} < 0$ (white).
      \end{figuretext}
   \end{center}
\end{figure}

\begin{figure}
\begin{center}
 \begin{overpic}[width=.5\textwidth]{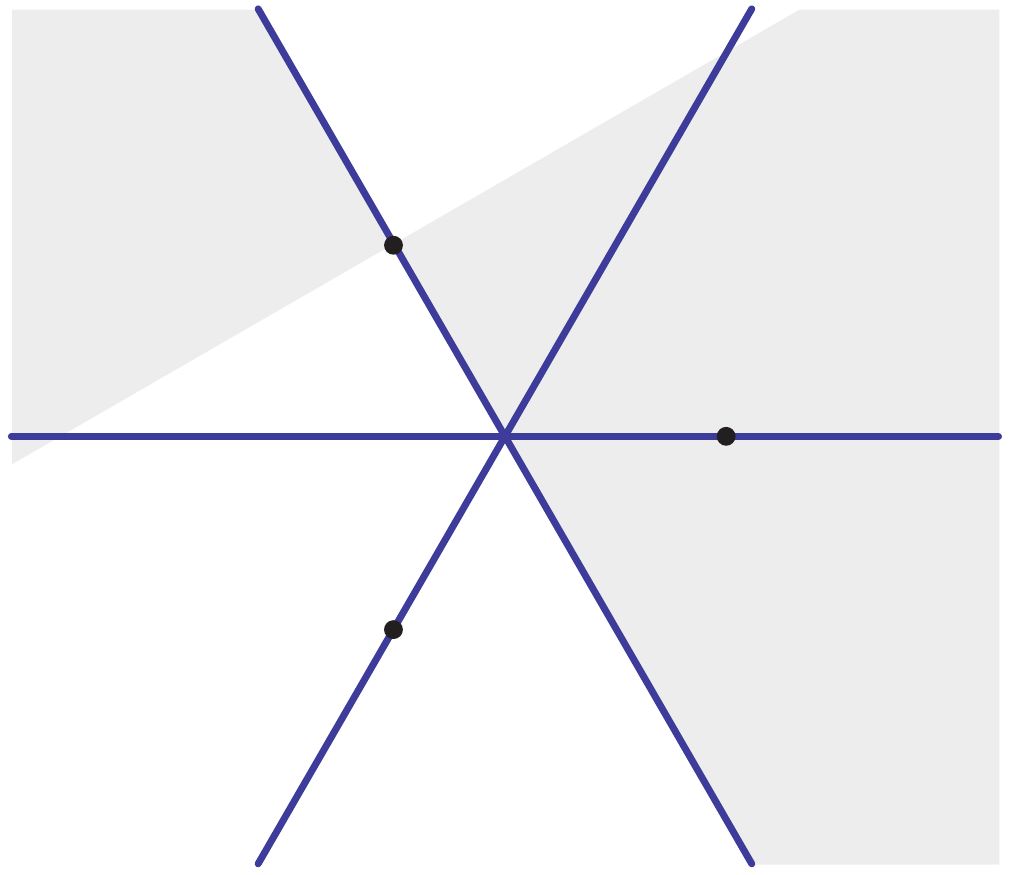}
 \put(102,42){\small $\R$}
 \put(36,78){\small $\re \Phi_{31} < 0$}
 \put(70,58){\small $\re \Phi_{31} > 0$}
  \put(30,61){\small $\omega k_0$}
 \put(70,38){\small $k_0$}
 \put(27,24){\small $\omega^2 k_0$}
   \end{overpic}
   \begin{figuretext}\label{rePhi13.pdf}
      The regions where $\re \Phi_{31} > 0$ (shaded) and $\re \Phi_{31} < 0$ (white).
      \end{figuretext}
   \end{center}
\end{figure}

Near each of the three critical points, the RH problem can be approximated by a local parametrix which is constructed in Section \ref{localsec}.
In fact, since the jump matrix $v$ obeys the symmetries
\begin{align}\label{vsymm}
v(x,t,k) = \mathcal{A} v(x,t,\omega k)\mathcal{A}^{-1}
 = \mathcal{B} \overline{v(x,t, \bar{k})}^{-1}\mathcal{B}, \qquad k \in \Gamma,
\end{align}
where
\begin{align}\label{def of Acal and Bcal}
\mathcal{A} = \begin{pmatrix}
0 & 0 & 1 \\
1 & 0 & 0 \\
0 & 1 & 0
\end{pmatrix} \qquad \mbox{ and } \qquad \mathcal{B} = \begin{pmatrix}
0 & 1 & 0 \\
1 & 0 & 0 \\
0 & 0 & 1
\end{pmatrix},
\end{align}
the solution $m$ obeys the symmetries
\begin{align}\label{msymm}
m(x,t, k) = \mathcal{A} m(x,t,\omega k)\mathcal{A}^{-1}
 = \mathcal{B} \overline{m(x,t, \bar{k})}\mathcal{B}, \qquad k \in \C \setminus \Gamma.
\end{align}
It is therefore sufficient to construct the local parametrix $m^{k_0}$ at $k_0$, because then the local parametrices at $\omega k_0$ and $\omega^2 k_0$ can be obtained by symmetry. 
In the end, we arrive at a small-norm RH problem whose solution is estimated in Section \ref{smallnormsec}. Finally, the asymptotics of $u(x,t)$ is obtained in Section \ref{uasymptoticssec}.

\begin{figure}
\begin{center}
 \begin{overpic}[width=.5\textwidth]{rePhi12.pdf}
 \put(102,41.8){\small $\R$}
 \put(38,78){\small $\re \Phi_{32} < 0$}
 \put(70,58){\small $\re \Phi_{32} > 0$}
 \put(30,61){\small $\omega k_0$}
 \put(70,38){\small $k_0$}
 \put(27,24){\small $\omega^2 k_0$}
   \end{overpic}
   \begin{figuretext}\label{rePhi12.pdf}
      The regions where $\re \Phi_{32} > 0$ (shaded) and $\re \Phi_{32} < 0$ (white).
      \end{figuretext}
   \end{center}
\end{figure}

\subsection{Assumptions for the remainder of the paper}
For the remainder of the paper, we assume that $\{u(x,t), v(x,t)\}$ is a Schwartz class solution of (\ref{boussinesqsystem}) with initial data $u_0, v_0 \in \mathcal{S}(\R)$ such that Assumptions \ref{solitonlessassumption} and \ref{originassumption} hold. We also assume that $r_1(k)$ and $r_2(k)$ are defined by (\ref{r1def}) and (\ref{r2def}) and that $\zeta_0 \geq 0$ is defined by (\ref{zeta0def}). We let $\mathcal{I}$ denote a fixed compact subset of $(\zeta_0,\infty)$.

\section{Transformations of the RH problem}\label{transsec}
By performing a number of transformations, we can bring the RH problem \ref{RHm} to a form suitable for determining the long-time asymptotics. More precisely, starting with $m$, we will define functions $m^{(j)}(x,t,k)$, $j = 1,2, 3$, such that the RH problem satisfied by $m^{(j)}$ is equivalent to the original RH problem \ref{RHm}. The RH problem for $m^{(j)}$ can be formulated as follows, where the contours $\Gamma^{(j)}$ and the jump matrices $v^{(j)}$ are specified below.

\begin{RHproblem}[RH problem for $m^{(j)}$]\label{RHmj}
Find a $3 \times 3$-matrix valued function $m^{(j)}(x,t,\cdot) \in I + \dot{E}^{3}( \C \setminus \Gamma^{(j)})$ such that $m^{(j)}_+(x,t,k) = m^{(j)}_-(x, t, k) v^{(j)}(x, t, k)$ for a.e. $k \in \Gamma^{(j)}$.
\end{RHproblem}

The jump matrix $v^{(3)}$ obtained after the third transformation has the property that it approaches the identity matrix as $t \to \infty$ everywhere on the contour except near the three critical point $\{k_0, \omega k_0, \omega^2 k_0\}$. This means that we can find the long-time asymptotics of $m^{(3)}$ by computing the contribution from three small crosses centered at these points.

The symmetries (\ref{vsymm}) and (\ref{msymm}) will be preserved at each stage of the transformations, so that, for $j = 1, 2,3$,
\begin{align}\label{Z3vjsymm}
& v^{(j)}(x,t,k) = \mathcal{A} v^{(j)}(x,t,\omega k)\mathcal{A}^{-1}
 = \mathcal{B} \overline{v^{(j)}(x,t, \bar{k})}^{-1}\mathcal{B}, \qquad k \in \Gamma^{(j)},
	\\ \label{mjsymm}
& m^{(j)}(x,t, k) = \mathcal{A} m^{(j)}(x,t,\omega k)\mathcal{A}^{-1}
 = \mathcal{B} \overline{m^{(j)}(x,t, \bar{k})}\mathcal{B}, \qquad k \in \C \setminus \Gamma^{(j)}.
\end{align}

\subsection{First transformation}
The purpose of the first transformation is to remove (except for a small remainder) the jumps across the subcontours $e^{\frac{\pi i}{3}}\R_+$, $\R_-$, and $e^{-\frac{\pi i}{3}}\R_+$ of $\Gamma$. 
To implement this transformation, we need analytic approximations of the functions $r_2^{*}$, $r_1$, and $\hat{r}_{1}^{*}$, where $\hat{r}_1(k)$ is defined by
\begin{align*}
& \hat{r}_1(k) = \frac{r_1(k)}{1 - r_1(k)r_1^{*}(k)}.
\end{align*}
We introduce open sets $U_j = U_j(\zeta) \subset \C$, $j = 1, \dots, 4$, as in Figure \ref{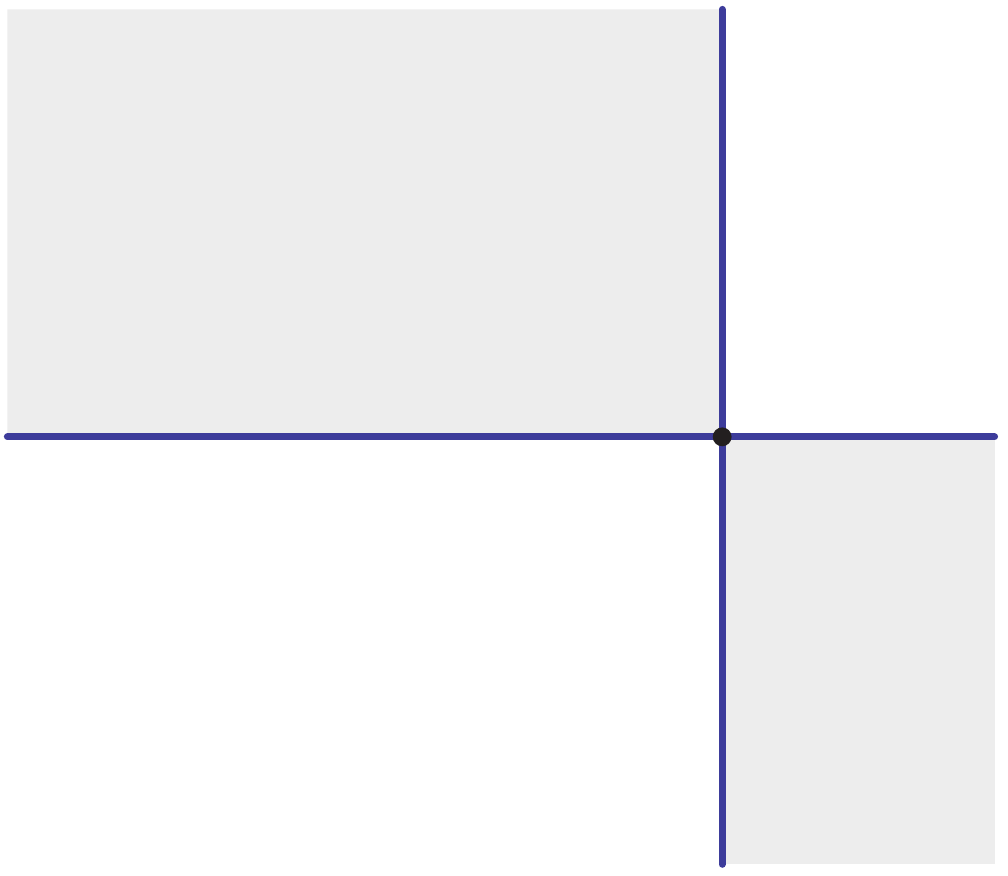} such that
$$
U_1 \cup U_3=\{k \, | \, \re \Phi_{21}(\zeta, k) < 0\},\qquad
U_2 \cup U_4=\{k \, | \, \re \Phi_{21}(\zeta, k) > 0\}.$$

\begin{figure}
\begin{center}
 \begin{overpic}[width=.55\textwidth]{Ujs.pdf}
 \put(102,42){\small $\re k$}
 \put(83,65){\small $U_1$}
 \put(34,65){\small $U_2$}
 \put(34,20){\small $U_3$}
 \put(83,20){\small $U_4$}
 \put(66,38.5){\small $k_0$}
   \end{overpic}
   \begin{figuretext}\label{Ujs.pdf}
      The open sets $\{U_j\}_1^4$ in the complex $k$-plane.
      \end{figuretext}
   \end{center}
\end{figure}

\begin{lemma}\label{decompositionlemma}
There exist decompositions
\begin{align}
& r_2^{*}(k) = r_{2,a}^{*}(x, t, k) + r_{2,r}^{*}(x, t, k), & & k \in (-\infty,0], \nonumber \\
& r_1(k) = r_{1,a}(x, t, k) + r_{1,r}(x, t, k), & & k \in [0,k_{0}], \nonumber \\
& \hat{r}_{1}^{*}(k) = \hat{r}_{1,a}^{*}(x, t, k) + \hat{r}_{1,r}^{*}(x, t, k), & & k \in [k_{0},\infty), \label{decomposition lemma analytic + remainder}
\end{align}
where the functions $r_{2,a}^{*},r_{2,r}^{*},r_{1,a},r_{1,r},\hat{r}_{1,a}^{*},\hat{r}_{1,r}^{*}$ have the following properties:
\begin{enumerate}[$(a)$]
\item 
For each $\zeta \in \mathcal{I}$ and each $t > 0$, $r_{2,a}^{*}(x, t, k)$ and $r_{1,a}(x, t, k)$ are defined and continuous for $k \in \bar{U}_2$ and analytic for $k \in U_2$, and $\hat{r}_{1,a}^{*}(x, t, k)$ is defined and continuous for $k \in \bar{U}_1$ and analytic for $k \in U_1$.

\item For each $\zeta \in \mathcal{I}$ and $t>0$, the functions $r_{2,a}^{*}$, $r_{1,a}$, and $\hat{r}_{1,a}^{*}$ satisfy
\begin{subequations}\label{bounds on analytic part}
\begin{align}
& |r_{2,a}^{*}(x, t, k)| \leq \frac{C|k-\omega k_{0}|}{1 + |k|^{2}} e^{\frac{t}{4}|\re \Phi_{21}(\zeta,k)|}, & &  k \in \bar{U}_2, \label{estimate r2astar at infty} 
	\\ 
& |\partial_{x}^{l}(r_{2,a}^{*}(x, t, k)-r_2^{*}(0))| \leq C|k| e^{\frac{t}{4}|\re \Phi_{21}(\zeta,k)|}, & &  k \in \bar{U}_2, \label{estimate r2astar at 0} 
	\\ 
& |\partial_{x}^{l}(r_{1, a}(x, t, k) - r_1(0))| \leq C |k| e^{\frac{t}{4}|\re \Phi_{21}(\zeta,k)|}, & & k \in \bar{U}_2, \label{estimate r1a at 0} 
	\\ 
& |\partial_{x}^{l}(r_{1, a}(x, t, k) - r_1(k_0))| \leq C |k - k_0| e^{\frac{t}{4}|\re \Phi_{21}(\zeta,k)|}, & & k \in \bar{U}_2, 
	 \\ 
& |\partial_{x}^{l}(\hat{r}_{1,a}^{*}(x,t,k) - \hat{r}_{1}^{*}(k_{0})) | \leq C |k - k_0| e^{\frac{t}{4}|\re \Phi_{21}(\zeta,k)|}, & & k \in \bar{U}_1, \label{estimate r1ahatstar at k0} 
	\\ 
& |\partial_{x}^{l}\hat{r}_{1, a}^{*}(x, t, k)| \leq \frac{C}{1 + |k|} e^{\frac{t}{4}|\re \Phi_{21}(\zeta,k)|}, & &  k \in \bar{U}_1,  \label{estimate r1ahatstar at infty} 
\end{align}
\end{subequations}
where $l=0,1$ and the constant $C$ is independent of $\zeta, t, k$.
\item For each $1 \leq p \leq \infty$ and $l = 0,1$, 
\begin{align}\label{norm remainder r2r}
& \text{the $L^p$-norm of $(1 + |\cdot|)\partial_x^lr_{2,r}^{*}(x,t,\cdot)$ on $(-\infty,0)$ is $O(t^{-3/2})$,}
	\\ \label{norm remainder r1r}
& \text{the $L^p$-norms of $\partial_x^l r_{1,r}(x,t,\cdot)$ and $\tfrac{r_{1,r}(x, t, \cdot)}{\cdot - k_{0}}$ on $(0, k_0)$ are $O(t^{-3/2})$,}
	\\ \label{norm remainder r1rhat}
& \text{the $L^p$-norms of $(1+|\cdot|)\partial_x^l \hat{r}_{1,r}^{*}(x, t, \cdot)$ and $\tfrac{\hat{r}_{1,r}^{*}(x, t, \cdot)}{\cdot - k_{0}}$ on $(k_0, \infty)$ are $O(t^{-3/2})$,}
\end{align}
uniformly for $\zeta \in \mathcal{I}$ as $t \to \infty$.
\end{enumerate}
\end{lemma}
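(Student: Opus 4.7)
The plan is a Deift--Zhou-style analytic-approximation construction, carried out essentially identically in all three cases; I will describe the decomposition of $r_1$ on $[0,k_0]$ in detail and indicate the adjustments needed for $r_2^{*}$ and $\hat r_1^{*}$ at the end. First I will subtract a Hermite-interpolating polynomial $P(\zeta,k)$ in $k$, depending smoothly on $\zeta \in \mathcal{I}$, so that $P$ matches all derivatives of $r_1$ up to some order $N$ at both $k=0$ and $k=k_0$; then $f(\zeta,k):=r_1(k)-P(\zeta,k)$ is $C^\infty$ on $[0,k_0]$ and vanishes to order $N+1$ at both endpoints. The polynomial $P$ extends as an entire function and will contribute to $r_{1,a}$; the remaining work is to split $f$ itself into an analytic piece extending to $\bar U_2$ plus a small remainder on $[0,k_0]$.

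The standard tool for this is Fourier decomposition, but the bound $e^{(t/4)|\re\Phi_{21}(\zeta,k)|}$ on the analytic piece required by (\ref{bounds on analytic part}) forces a twist. I will introduce a real-valued cutoff $\varphi(\zeta,k)$, smooth and compactly supported in $k$, that agrees with $|\re\Phi_{21}|/4$ on a thin neighborhood of the contour $(0,k_0)$ inside $\bar U_2$ and is bounded above by it globally. Extending $f$ by zero to $\R$ and smoothing it into $\tilde f \in C_c^\infty(\R)$ (still vanishing to order $N+1$ at $0$ and $k_0$), I Fourier-split the damped product $\tilde f\, e^{-t\varphi}$ into positive- and negative-frequency parts. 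By the Paley--Wiener theorem, $(\tilde f\, e^{-t\varphi})_{+}$ extends holomorphically to $\{\im k > 0\}$, and multiplying back by $e^{t\varphi}$ reproduces the size of $f$ on the real axis while giving the growth $e^{(t/4)|\re\Phi_{21}|}$ on $U_2$. I will then set $r_{1,a}(x,t,k) := P(\zeta,k) + (\tilde f\, e^{-t\varphi})_{+}(\zeta,k)\, e^{t\varphi(\zeta,k)}$ on $\bar U_2$, and $r_{1,r}(x,t,k) := f(\zeta,k) - (\tilde f\, e^{-t\varphi})_{+}(\zeta,k)\, e^{t\varphi(\zeta,k)}$ on $[0,k_0]$. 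The sharper bounds (\ref{estimate r1a at 0})--(\ref{estimate r1ahatstar at k0}) then follow because $r_{1,a} - r_1(0)$ and $r_{1,a} - r_1(k_0)$ inherit from $P$ a zero of order $N+1$ at the relevant endpoint.

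For the remainder, $M$ integrations by parts in the Fourier integral defining the negative-frequency piece produce a factor of $t^{-M}$ at the cost of $M$ derivatives of $\tilde f\, e^{-t\varphi}$; the boundary terms vanish because $\tilde f$ has compact support and vanishes to high order at the endpoints. Choosing $N$ and $M$ large enough gives $O(t^{-3/2})$ in every $L^p$-norm for both $r_{1,r}$ and $r_{1,r}/(k-k_0)$ (the latter because $r_{1,r}$ vanishes to order $\geq N$ at $k_0$), as well as for $\partial_x r_{1,r}$ (noting that $\partial_x = t^{-1}\partial_\zeta$ only helps). The constructions of $r_{2}^{*}$ on $(-\infty,0]$ and $\hat r_{1}^{*}$ on $[k_0,\infty)$ are analogous, with Taylor interpolation at a single finite endpoint and Schwartz decay of $r_2, r_1$ providing the other ``endpoint''; the prefactor $|k-\omega k_0|/(1+|k|^2)$ in (\ref{estimate r2astar at infty}) is engineered into $r_{2,a}^{*}$ by multiplying by an explicit rational factor with a zero at $\omega k_0$, which is convenient later because $\omega k_0$ is a neighboring critical point. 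The main obstacle I anticipate is the coupled design of $\varphi$ and $N$: the cutoff must track $|\re\Phi_{21}|/4$ sharply along the contour, yet remain smooth with bounded derivatives of all orders, so that each of the $M$ integrations by parts costs only $O(1)$ and the $t^{-M}$ decay survives in $L^p$ after taking $x$-derivatives and dividing by $k-k_0$.
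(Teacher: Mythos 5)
Your overall strategy---endpoint Taylor/Hermite interpolation to create high-order vanishing, followed by a Fourier splitting and a Paley--Wiener argument to produce the analytic continuation---is exactly the Deift--Zhou technique that the paper invokes (the paper omits the proof, citing \cite{DZ1993} and \cite[Lemma 4.8]{Lnonlinearsteepest}). However, the specific splitting mechanism you propose does not work, for two related reasons. First, on the contour $(0,k_0)\subset\R$ the phase is purely imaginary, $\Phi_{21}(\zeta,k)=-i\sqrt{3}\,k(\zeta-k)$, so $|\re \Phi_{21}|/4\equiv 0$ there and your damping factor $e^{-t\varphi}$ is identically $1$ on the set where the Fourier transform is actually taken; consequently your remainder on $[0,k_0]$ is just the negative-frequency part of $\tilde f$, which is independent of $t$ and has no reason to be $O(t^{-3/2})$. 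The $t$-decay of the remainder in the Deift--Zhou argument comes from splitting the Fourier integral at the $t$-dependent frequency $\pm t/4$, so that the discarded tail is controlled by $\int_{|s|>t/4}|\hat F(s)|\,ds=O(t^{-M})$ using smoothness of $F$; it does not come from a splitting at frequency zero, and your ``$M$ integrations by parts produce $t^{-M}$'' step has no large parameter to exploit. Second, $\varphi=|\re\Phi_{21}|/4$ is real-valued and non-constant off the contour, so $e^{t\varphi}$ is not analytic in $k$, and the product $(\tilde f e^{-t\varphi})_{+}\,e^{t\varphi}$ is therefore not analytic in $U_2$; this violates property $(a)$ of the lemma, which is the entire point of the construction.

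The repair is standard but essential: after removing the interpolating polynomial, parametrize the contour by the real phase $\phi=-i\Phi_{21}(\zeta,\cdot)$, write the (suitably renormalized) remainder as $F(\phi)=\int_\R \hat F(s)e^{is\phi}\,ds$, and define the analytic part as (schematically) $\int_{|s|\le t/4}\hat F(s)\,e^{s\Phi_{21}(\zeta,k)}\,ds$. Each integrand $e^{s\Phi_{21}}$ is entire in $k$, so the analytic part is genuinely analytic in the required region; the restriction $|s|\le t/4$ yields precisely the bound $Ce^{\frac{t}{4}|\re \Phi_{21}(\zeta,k)|}$ appearing in \eqref{bounds on analytic part}; and the discarded tail, evaluated on the contour where $|e^{s\Phi_{21}}|=1$, is $O(t^{-3/2})$ once $F$ has enough Sobolev regularity---which is what the endpoint vanishing of order $N+1$ (and, for the quotient bounds in \eqref{norm remainder r1r} and \eqref{norm remainder r1rhat}, dividing by $k-k_0$ before transforming) is for. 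Your handling of the rational prefactor in \eqref{estimate r2astar at infty} and of $\partial_x=t^{-1}\partial_\zeta$ is fine once this core step is fixed.
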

\begin{proof}
The proof uses the techniques of \cite{DZ1993}. Since these techniques are rather standard by now, we omit details; see \cite[Lemma 4.8]{Lnonlinearsteepest} for a proof of a similar lemma. 
\end{proof}

In the sequel, we often write $r_{j,a}(k)$ and $r_{j,r}(k)$ instead of $r_{j,a}(x,t,k)$ and $r_{j,r}(x,t,k)$, respectively, for notational convenience. 

Recalling that $r_2=r_{2,a}+r_{2,r}$, we can factorize $v_2, v_4, v_6$ as follows:
$$v_2 = v_{2,a}^U v_{2,r} v_{2,a}^L, \qquad v_4 = v_{4,a}^U v_{4,r} v_{4,a}^L, \qquad v_6 = v_{6,a}^L v_{6,r}v_{6,a}^U,$$
where the analytic factors are given by
\begin{align*}
& v_{2,a}^U = \begin{pmatrix}   
1 & 0 & 0 \\
0 & 1 & -r_{2,a}^{*}(\omega k)e^{-t\Phi_{32}} \\
0 & 0 & 1   
\end{pmatrix},
& &
v_{2,a}^L = \begin{pmatrix}   
1 & 0 & 0 \\
0 & 1 & 0 \\
0 & r_{2,a}(\omega k)e^{t\Phi_{32}} & 1 
\end{pmatrix}, \\
& v_{4,a}^U = \begin{pmatrix}  
1 & -r_{2,a}^{*}(k)e^{-t\Phi_{21}} & 0 \\
0 & 1 & 0 \\
0 & 0 & 1 \end{pmatrix},
& &
v_{4,a}^L = \begin{pmatrix}  
1 & 0 & 0 \\
r_{2,a}(k)e^{t\Phi_{21}} & 1 & 0 \\
0 & 0 & 1 \end{pmatrix}, \\
& v_{6,a}^L =  \begin{pmatrix} 
1 & 0 & 0 \\
0 & 1 & 0 \\
-r_{2,a}^{*}(\omega^{2}k)e^{t\Phi_{31}} & 0 & 1 \end{pmatrix}, & & v_{6,a}^U =  \begin{pmatrix} 
1 & 0 & r_{2,a}(\omega^{2}k)e^{-t \Phi_{31}} \\
0 & 1 & 0 \\
0 & 0 & 1 
\end{pmatrix},
\end{align*}
and the small remainders $v_{j,r}$, $j = 2,4,6$, are given by the expressions obtained by replacing $r_j$ with $r_{j,r}$ in the definition (\ref{vdef}) of $v_j$, i.e.,
\begin{align}\nonumber
&  v_{2,r} = 
  \begin{pmatrix}   
 1 & 0 & 0 \\
 0 & 1 - r_{2,r}(\omega k)r_{2,r}^*(\omega k) & -r_{2,r}^*(\omega k)e^{-t\Phi_{32}} \\
 0 & r_{2,r}(\omega k)e^{t\Phi_{32}} & 1 
    \end{pmatrix},
   	\\ \nonumber
&  v_{4,r} = 
  \begin{pmatrix}  
  1 - |r_{2,r}(k)|^2 & -r_{2,r}^*(k) e^{-t\Phi_{21}} & 0 \\
  r_{2,r}(k)e^{t\Phi_{21}} & 1 & 0 \\
  0 & 0 & 1
   \end{pmatrix},
   	\\ \nonumber
&  v_{6,r} = 
  \begin{pmatrix} 
  1 & 0 & r_{2,r}(\omega^2 k)e^{-t\Phi_{31}} \\
  0 & 1 & 0 \\
  -r_{2,r}^*(\omega^2 k)e^{t\Phi_{31}} & 0 & 1 - r_{2,r}(\omega^2 k)r_{2,r}^*(\omega^2 k)
   \end{pmatrix}.
\end{align}
Define the sectionally analytic function $m^{(1)}$ by
 $$m^{(1)}(x,t,k) = m(x,t,k)G(x,t,k),$$
 where $G$ is defined by
\begin{align}\label{Gdef}
G(x,t,k) = \begin{cases} v_{2,a}^U, & k \in D_1, \\
 (v_{2,a}^L)^{-1}, & k \in D_2, \\
v_{4,a}^U , & k \in D_3, \\
(v_{4,a}^L)^{-1}, & k \in D_4, \\
v_{6,a}^L, & k \in D_5, \\
(v_{6,a}^U)^{-1}, & k \in D_6.
\end{cases}
\end{align}

\begin{lemma}\label{Glemma}
$G(x,t,k)$ and $G(x,t,k)^{-1}$ are uniformly bounded for $k \in \mathbb{C}\setminus \Gamma$, $t> 0$, and $\zeta \in \mathcal{I}$. Moreover, $G = I + O(k^{-1})$ as $k \to \infty$. 
\end{lemma}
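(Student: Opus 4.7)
My plan is to observe that $G$ is structurally trivial in each sector, which reduces the claim to a single pointwise estimate on one off-diagonal entry. First, I would note that by the definition (\ref{Gdef}), in each sector $D_n$ the matrix $G$ equals a single unipotent matrix $I + N_n$, where $N_n$ has exactly one nonzero off-diagonal entry of the form
\begin{equation*}
\pm\, r_{2,a}^{(\ast)}(\omega^{\alpha} k)\, e^{\pm t \Phi_{ij}(\zeta, k)}
\end{equation*}
for appropriate $\alpha \in \{0,1,2\}$ and $(i,j)$. Since $N_n$ has a single entry, $N_n^2 = 0$, and therefore $G^{-1}|_{D_n} = I - N_n$ is unipotent with the same single nonzero entry up to sign. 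Both claims thus reduce to uniform pointwise bounds on $|N_n|$: uniform boundedness on $\C\setminus\Gamma$ for the first claim, and $|N_n(k)| = O(|k|^{-1})$ as $|k|\to\infty$ within each sector for the second.

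Next, I would record the two phase identities
\begin{equation*}
\Phi_{21}(\zeta,\omega k) = \Phi_{32}(\zeta,k), \qquad \Phi_{21}(\zeta,\omega^{2} k) = -\Phi_{31}(\zeta,k),
\end{equation*}
which follow by a direct computation using $\omega^3 = 1$. These let me rewrite the exponent $\pm t \Phi_{ij}(\zeta, k)$ appearing in $N_n$ as $\pm t \Phi_{21}(\zeta, \omega^{\alpha} k)$, aligning its phase with the argument of $r_{2,a}^{(\ast)}$. The pointwise bound I would then apply is that of Lemma \ref{decompositionlemma}(b); for $r_{2,a}$ one uses the Schwartz-conjugate version on the reflected domain $\bar U_3$. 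In each $D_n$ I will verify two things: (a) that $\omega^\alpha k$ lies in the closure of the appropriate analyticity domain, so that the estimate of Lemma \ref{decompositionlemma}(b) applies, and (b) that the sign of $\re \Phi_{ij}(\zeta, k)$ on $D_n$ matches the decay direction of the exponential built into $v_{j,a}^{U/L}$, as dictated by the signature tables in Figures \ref{rePhi23.pdf}--\ref{rePhi12.pdf}. With (a) and (b) in hand, the growth factor $e^{(t/4)|\re \Phi_{21}(\zeta, \omega^\alpha k)|}$ coming from the decomposition estimate combines with the exponential in $N_n$ to produce a net exponent $-\tfrac{3t}{4}|\re\Phi_{ij}(\zeta,k)| \leq 0$, leaving the prefactor
\begin{equation*}
|N_n(k)| \leq \frac{C|k-k_0|}{1+|k|^{2}},
\end{equation*}
uniformly in $\zeta \in \mathcal{I}$, $t > 0$, and $k \in D_n$. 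This is uniformly bounded and of order $O(|k|^{-1})$ as $|k|\to\infty$, which establishes both parts of the lemma.

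The hard part will be the phase matching in (a) and (b): the analytic factors $v_{j,a}^{U/L}$ were chosen precisely so that this cancellation succeeds, and the coefficient $1/4$ in Lemma \ref{decompositionlemma}(b) is just enough to leave $3/4$ of the decay intact after absorbing the exponential growth of the analytic extension. Six sectors must be checked, but the $\mathbb{Z}_3$-symmetry (\ref{Z3vjsymm}) organizes them into two $\omega$-orbits $\{D_1,D_3,D_5\}$ and $\{D_2,D_4,D_6\}$, so I would only need to carry out the verification for one representative from each orbit, with the remaining four cases following automatically by symmetry.
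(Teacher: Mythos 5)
Your proposal is correct and follows essentially the same route as the paper: reduce to a pointwise bound on the single nilpotent off-diagonal entry of $G$ in each sector, obtained by combining the estimates of Lemma \ref{decompositionlemma}(b) (and their Schwartz-conjugate versions) with the sign of $\re\Phi_{ij}$ from the signature tables, and then extend to the remaining sectors by the $\mathcal{A}$-symmetry. The only cosmetic imprecision is that the numerator of the prefactor is $|k-\omega k_0|$ or $|k-k_0|$ depending on the sector, which affects neither the uniform boundedness nor the $O(k^{-1})$ decay.
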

\begin{proof}
We have $\re \Phi_{21}(\zeta,k) > 0$ for $k \in D_{3}$ (see Figure \ref{rePhi23.pdf}). Therefore, by virtue of \eqref{estimate r2astar at infty},
\begin{align*}
|v_{4,a}^{U}(x,t,k)-I| \leq \frac{C}{1+|k|}e^{-c t |\re \Phi_{21}(\zeta,k)|}, \qquad k \in D_{3},
\end{align*}
uniformly for $\zeta \in \mathcal{I}$.
Since $\re \Phi_{21}(\zeta,k) < 0$ for $\zeta \in D_{4}$ (see Figure \ref{rePhi23.pdf} again), we deduce similarly that
\begin{align*}
& |r_{2,a}(x, t, k)| \leq \frac{C}{1 + |k|} e^{\frac{t}{4}|\re \Phi_{21}(\zeta,k)|}, \qquad k \in \bar{U}_3,
\end{align*}
and hence
\begin{align*}
|(v_{4,a}^{L})^{-1}(x,t,k)-I| \leq \frac{C}{1+|k|}e^{-c t |\re \Phi_{21}(\zeta,k)|}, \qquad k \in D_{4}.
\end{align*}
We appeal to the $\mathcal
A$-symmetry of \eqref{Z3vjsymm} to extend these bounds to the other sectors. 
\end{proof}

It follows from Lemma \ref{Glemma} that $m$ satisfies RH problem \ref{RHm} if and only if $m^{(1)}$
satisfies RH problem \ref{RHmj} with $j = 1$, where $\Gamma^{(1)} = \Gamma$ and the jump matrix $v^{(1)}$ is given on $\Gamma_1 \cup \Gamma_3 \cup \Gamma_5$ by
$$v_1^{(1)} = v_{6,a}^U v_1 v_{2,a}^U, \qquad
v_3^{(1)} = v_{2,a}^L v_3 v_{4,a}^U, \qquad
v_5^{(1)} = v_{4,a}^L v_5 v_{6,a}^L,$$
and the small jumps remaining on  $\Gamma_2 \cup \Gamma_4 \cup \Gamma_6$ are given by
$$v_j^{(1)} = v_{j,r}, \qquad j = 2,4,6.$$
Here $\Gamma_j$ denotes the subcontour of $\Gamma$ labeled by $j$ in Figure \ref{Gamma.pdf}. 
More explicitly, the jump matrices $v_j^{(1)}$, $j = 1,3,5$, can be expressed as
\begin{align*}
& v_1^{(1)} = \begin{pmatrix}
1 & -r_1(k)e^{-t \Phi_{21}} & \beta(k)e^{-t \Phi_{31}} \\
r_1^{*}(k)e^{t \Phi_{21}} & 1-r_1(k)r_1^{*}(k) & \alpha(k)e^{-t\Phi_{32}} \\
0 & 0 & 1
\end{pmatrix}, \\
& v_{3}^{(1)} = \begin{pmatrix}
1 - r_1(\omega^{2} k)r_1^{*}(\omega^{2} k) & \alpha(\omega^{2} k) e^{-t \Phi_{21}} & r_1^{*}(\omega^{2} k) e^{-t \Phi_{31}} \\
0 & 1 & 0 \\
-r_1(\omega^{2} k)e^{t \Phi_{31}} & \beta(\omega^{2}k)e^{t\Phi_{32}} & 1
\end{pmatrix}, \\
& v_{5}^{(1)} = \begin{pmatrix}
1 & 0 & 0 \\
\beta(\omega k) e^{t \Phi_{21}} & 1 & -r_1(\omega k) e^{-t\Phi_{32}} \\
\alpha(\omega k)e^{t\Phi_{31}} & r_1^{*}(\omega k) e^{t\Phi_{32}} & 1-r_1(\omega k)r_1^{*}(\omega k)
\end{pmatrix},
\end{align*}
where the functions $\alpha(k) \equiv \alpha(x,t,k)$ and $\beta(k) \equiv \beta(x,t,k)$ are defined by
\begin{align*}
& \alpha(k) = -r_{2,a}^{*}(\omega k)(1 - r_1(k)r_1^*(k)), & & k\in \R_+,
	\\
& \beta(k) =r_{2,a}(\omega^2 k) + r_1(k)r_{2,a}^{*}(\omega k), & & k \in \R_+.
\end{align*}

\begin{figure}
\begin{center}
 \begin{overpic}[width=.7\textwidth]{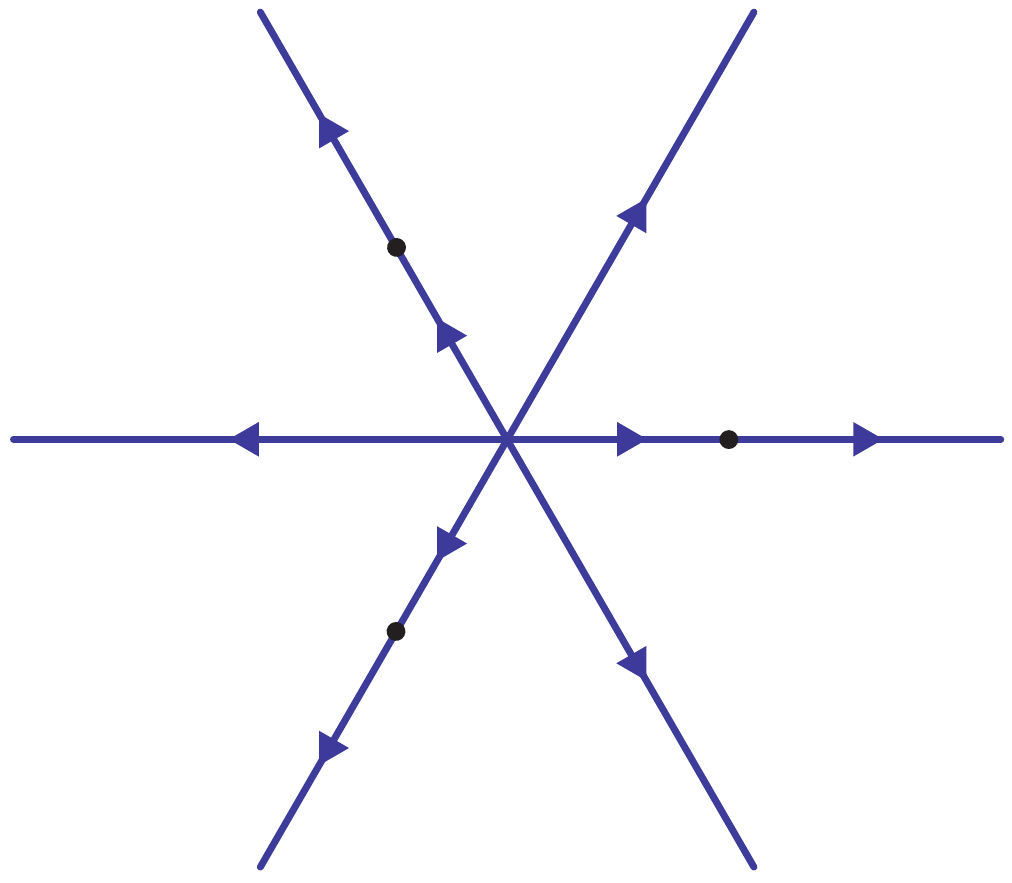}
  \put(101,42.5){\small $\Gamma^{(2)}$}
 \put(84,39){\small $1$}
 \put(65,63){\small $2$}
 \put(35,73){\small $3$}
 \put(24,39){\small $4$}
 \put(35,12){\small $5$}
 \put(65,21){\small $6$}
 \put(60,39){\small $7$}
 \put(46,53){\small $8$}
 \put(46,32){\small $9$}
 \put(72,39){\small $k_0$}
 \put(41,62){\small $\omega k_0$}
 \put(41,22){\small $\omega^2 k_0$}
   \end{overpic}
     \begin{figuretext}\label{Gamma2.pdf}
       The contour $\Gamma^{(2)}$ in the complex $k$-plane.
     \end{figuretext}
     \end{center}
\end{figure}

\subsection{Second transformation}
Let $\Gamma^{(2)} = \cup_{j=1}^9 \Gamma_j^{(2)}$ denote the contour displayed in Figure \ref{Gamma2.pdf}, where $\Gamma_1^{(2)} = [k_0, \infty)$ etc. For each $\zeta \in \mathcal{I}$, we choose $\delta_1(\zeta, k)$ such that $\delta_1$ is analytic except for the following jump across $\Gamma_1^{(2)}$:
\begin{align*}
\delta_{1+}(\zeta, k) = \delta_{1-}(\zeta, k)(1 - |r_1(k)|^2), \qquad k \in \Gamma_1^{(2)},
\end{align*}
and such that
\begin{align}\label{delta1 asymp at inf}
\delta_1(\zeta, k) = 1 + O(k^{-1}), \qquad k \to \infty.
\end{align} 
The definition (\ref{zeta0def}) of $\zeta_0$ together with the relation $k_{0}=\zeta/2$ implies that there exists an $\epsilon > 0$ such that 
\begin{align}\label{rk0leq1minusepsilon}
|r(k_0)| \leq 1 - \epsilon \quad \text{for all $k \in [k_{0},\infty)$ and all $\zeta \in \mathcal{I}$}.
\end{align}
Hence, by the Plemelj formulas, we find
\begin{equation} \label{delta1def}
\delta_1(\zeta, k) = \exp \left\{ \frac{1}{2\pi i} \int_{[k_0, \infty)} \frac{\ln(1 - |r_1(s)|^2)}{s - k} ds \right\}, \qquad k \in \C \setminus \Gamma_1^{(2)}.
\end{equation}

Let $\ln_0(k)$ denote the logarithm of $k$ with branch cut along $\arg k=0$, i.e., $\ln_0(k) = \ln|k| + i \arg_0 k$ with $\arg_0 k\in(0,2\pi)$. 

\begin{lemma}\label{deltalemma}
The function $\delta_1(\zeta, k)$ has the following properties:
\begin{enumerate}[$(a)$]
\item $\delta_{1}$ can be written as
\begin{align}\label{delta1 expression in terms of log and chi1}
\delta_1(\zeta,k) = e^{-i  \nu \ln_{0}(k-k_{0})}e^{-\chi_{1}(\zeta,k)},
\end{align}
where $\nu \equiv \nu(\zeta) \geq 0$ is defined by
\begin{align*}
\nu = - \frac{1}{2\pi}\ln(1-|r_1(k_{0})|^{2}), \qquad \zeta \in \mathcal{I},
\end{align*}
and 
\begin{align}\label{def of chi1}
& \chi_{1}(\zeta,k) = \frac{1}{2\pi i} \int_{k_{0}}^{\infty}  \ln_{0}(k-s) d\ln(1-|r_1(s)|^{2}).
\end{align}
\item For each $\zeta \in \mathcal{I}$, $\delta_1(\zeta, k)$ and $\delta_1(\zeta, k)^{-1}$ are analytic functions of $k \in \C \setminus \Gamma_1^{(2)}$ with continuous boundary values on $\Gamma_1^{(2)}\setminus \{k_0\}$. Moreover,
\begin{align}\label{delta1bound}
\sup_{\zeta \in \mathcal{I}} \sup_{k \in \C \setminus \Gamma_1^{(2)}} |\delta_1(\zeta,k)^{\pm 1}| < \infty.
\end{align}
\item $\delta_1$ obeys the symmetry
\begin{align}\label{symmetry of delta1}
\delta_1(\zeta, k) = \overline{\delta_1(\zeta, \bar{k})}^{-1}, \qquad \zeta \in \mathcal{I}, \ k \in \C \setminus \Gamma_1^{(2)}.
\end{align}
\item As $k \to k_{0}$ along a path which is nontangential to $(k_{0},\infty)$, we have
\begin{align}
& |\chi_{1}(\zeta,k)-\chi_{1}(\zeta,k_{0})| \leq C |k-k_{0}|(1+|\ln|k-k_{0}||), \label{asymp chi1 at k0} \\
& |\partial_{x}(\chi_{1}(\zeta,k)-\chi_{1}(\zeta,k_{0}))| \leq \frac{C}{t}(1+|\ln|k-k_{0}||), \label{asymp chi1 der at k0}
\end{align}
where $C$ is independent of $\zeta \in \mathcal{I}$. Furthermore,
\begin{align}\label{estimate der of chi}
|\partial_{x}\chi_{1}(\zeta,k_{0})| & = \frac{1}{t}\Big|\partial_{u}\chi_{1}(u,v)\big|_{(u,v) = (\zeta,k_{0})} + \frac{1}{2} \partial_{v}\chi_{1}(u,v)\big|_{(u,v) = (\zeta,k_{0})}\Big| \leq \frac{C}{t}
\end{align}
and
\begin{align}\label{der x of delta1}
\partial_{x} (\delta_{1}(\zeta,k)^{\pm 1}) = \frac{\pm i \nu}{2t(k-k_{0})} \delta_{1}(\zeta,k)^{\pm 1}.
\end{align}
\end{enumerate}
\end{lemma}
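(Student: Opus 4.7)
All four parts follow from manipulations of the Cauchy-type integral representation (\ref{delta1def}).

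For (a), I would integrate by parts in the exponent of (\ref{delta1def}) using the antiderivative $v(s) = \ln_0(k-s)$ of $1/(s-k)$ on $(k_0,\infty)$. The boundary contribution at $+\infty$ vanishes by the Schwartz decay of $r_1$, while the contribution at $s=k_0$ equals $-\ln(1-|r_1(k_0)|^2)\ln_0(k-k_0) = 2\pi\nu\ln_0(k-k_0)$. Dividing by $2\pi i$ and exponentiating yields the factorization (\ref{delta1 expression in terms of log and chi1}). For (c), complex conjugation of (\ref{delta1def}) negates $1/(2\pi i)$ and, using that $|r_1(s)|^2$ is real on $\R$ and $\overline{s-\bar k} = s-k$, leaves the rest unchanged, giving $\overline{\delta_1(\zeta,\bar k)} = \delta_1(\zeta,k)^{-1}$. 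For (b), the analyticity of $\delta_1^{\pm 1}$ on $\C\setminus \Gamma_1^{(2)}$ and the continuous boundary values on $\Gamma_1^{(2)}\setminus\{k_0\}$ are immediate from (\ref{delta1def}) and the Sokhotski--Plemelj theorem, since by (\ref{rk0leq1minusepsilon}) the integrand $\ln(1-|r_1(s)|^2)$ is smooth and bounded on $[k_0,\infty)$. The uniform bound (\ref{delta1bound}) is then read off from the factorization of (a): $|e^{-i\nu\ln_0(k-k_0)}| = e^{\nu\arg_0(k-k_0)} \leq e^{2\pi\nu}$ with $\nu$ bounded on $\mathcal{I}$ by (\ref{rk0leq1minusepsilon}), while $\chi_1$ itself is uniformly bounded because (\ref{def of chi1}) couples the locally integrable kernel $\ln_0(k-s)$ with a smooth, rapidly decaying density $d\ln(1-|r_1|^2)$.

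For part (d), the H\"older-type estimate (\ref{asymp chi1 at k0}) is obtained from the representation $\chi_1(\zeta,k) - \chi_1(\zeta,k_0) = \frac{1}{2\pi i}\int_{k_0}^\infty [\ln_0(k-s) - \ln_0(k_0-s)]\, d\ln(1-|r_1(s)|^2)$ by splitting the $s$-integration at $|s-k_0| = 2|k-k_0|$: the near region contributes $O(|k-k_0|(1+|\ln|k-k_0||))$ via integrability of the endpoint logarithmic singularity, while the far region contributes $O(|k-k_0|)$ via the Lipschitz bound $|\ln_0(k-s) - \ln_0(k_0-s)| \leq C|k-k_0|/|s-k_0|$. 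The companion bound (\ref{asymp chi1 der at k0}) follows by the same splitting after noting that $\partial_x = \frac{1}{t}\partial_\zeta + \frac{1}{2t}\partial_{k_0}$, and (\ref{estimate der of chi}) is then an immediate chain-rule computation, with bounds on $\partial_u\chi_1, \partial_v\chi_1$ at $(\zeta,k_0)$ obtained by differentiating (\ref{def of chi1}) under the integral. Finally, (\ref{der x of delta1}) is cleanest via direct differentiation of (\ref{delta1def}): since $x$ enters only through the lower endpoint $k_0 = x/(2t)$, Leibniz gives $\partial_x \log \delta_1 = \frac{1}{2\pi i} \cdot \frac{\ln(1-|r_1(k_0)|^2)}{k-k_0} \cdot \frac{1}{2t} = \frac{i\nu}{2t(k-k_0)}$, and the formula for $\delta_1^{-1}$ follows by the chain rule.

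I expect the main obstacle to be arranging the endpoint estimates in part (d) so that they are uniform in $\zeta \in \mathcal{I}$ as the moving endpoint $k_0 = \zeta/2$ varies; this is feasible precisely because the uniform spectral gap $1-|r_1(k_0)|^2 \geq \epsilon > 0$ from (\ref{rk0leq1minusepsilon}) prevents $\nu$ from blowing up and keeps the density $d\ln(1-|r_1|^2)$ uniformly smooth and decaying near $k_0$ throughout $\mathcal{I}$.
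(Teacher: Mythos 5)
The paper gives no proof of this lemma (it says only that it ``follows from \eqref{delta1def} and relatively straightforward estimates''), and your route---integration by parts in the exponent for (a), conjugation of \eqref{delta1def} for (c), an endpoint splitting for \eqref{asymp chi1 at k0}, and differentiation of the moving lower limit for \eqref{der x of delta1}---is surely the intended one; those computations are correct. Two of your supporting claims, however, are false as stated. In (b), $\chi_1$ is \emph{not} uniformly bounded on $\C\setminus\Gamma_1^{(2)}$: since $\int_{k_0}^\infty d\ln(1-|r_1(s)|^2)=2\pi\nu\neq 0$, the imaginary part of $\chi_1$ grows like $-\nu\ln|k|$ as $k\to\infty$. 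What is bounded---and what \eqref{delta1bound} actually needs, since $|e^{-\chi_1}|=e^{-\re\chi_1}$---is $\re\chi_1=\frac{1}{2\pi}\int_{k_0}^\infty\arg_0(k-s)\,d\ln(1-|r_1(s)|^2)$, which is controlled by $2\pi$ times the total variation of $\ln(1-|r_1|^2)$ on $[k_0,\infty)$; combined with your correct bound $|e^{-i\nu\ln_0(k-k_0)}|=e^{\nu\arg_0(k-k_0)}\leq e^{2\pi\nu}$ this yields \eqref{delta1bound}. This is a fixable slip rather than a structural problem.

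The more serious issue is your justification of \eqref{estimate der of chi}: you propose to obtain ``bounds on $\partial_u\chi_1,\partial_v\chi_1$ at $(\zeta,k_0)$'' by differentiating \eqref{def of chi1} under the integral, but these two partial derivatives are \emph{individually divergent} as $v\to k_0$. Writing $\rho(s)=\frac{d}{ds}\ln(1-|r_1(s)|^2)$, one finds $\partial_u\chi_1(u,v)=-\frac{1}{4\pi i}\ln_0(v-k_0)\rho(k_0)$ and $\partial_v\chi_1(u,v)=\frac{1}{2\pi i}\int_{k_0}^\infty\frac{\rho(s)}{v-s}\,ds$, and both blow up logarithmically at $v=k_0$ since generically $\rho(k_0)\neq 0$. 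Only the combination appearing in \eqref{estimate der of chi} is finite: integrating $\partial_v\chi_1$ by parts gives $\frac{1}{2\pi i}\ln_0(v-k_0)\rho(k_0)+\frac{1}{2\pi i}\int_{k_0}^\infty\ln_0(v-s)\rho'(s)\,ds$, and half of the boundary term exactly cancels $\partial_u\chi_1$, leaving $\partial_u\chi_1+\frac12\partial_v\chi_1=\frac{1}{4\pi i}\int_{k_0}^\infty\ln_0(v-s)\rho'(s)\,ds$, which is bounded uniformly in $\zeta\in\mathcal{I}$ as $v\to k_0$. You must exhibit this cancellation; estimating the two partials separately, as you describe, would fail. (A harmless further imprecision: the far region in your proof of \eqref{asymp chi1 at k0} contributes $O(|k-k_0|\,|\ln|k-k_0||)$ rather than $O(|k-k_0|)$, because $\int_{2|k-k_0|}^{1}\frac{du}{u}$ produces a logarithm; this is still within the claimed bound.)
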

\begin{proof}
The lemma follows from (\ref{delta1def}) and relatively straightforward estimates.
\end{proof}

The functions $\delta_{3}$ and $\delta_{5}$ defined by
\begin{align*}
& \delta_3(\zeta,k) = \delta_1(\zeta,\omega^{2}k), & & k \in \mathbb{C}\setminus \Gamma_{3}^{(2)}, \\
& \delta_5(\zeta,k) = \delta_1(\zeta,\omega k), & & k \in \mathbb{C}\setminus \Gamma_{5}^{(2)},
\end{align*}
satisfy the jump relations
\begin{align*}
& \delta_{3+}(\zeta,k) = \delta_{3-}(\zeta,k)(1 - |r_1(\omega^{2}k)|^2), & & k \in \Gamma_{3}^{(2)}, \\
& \delta_{5+}(\zeta,k) = \delta_{5-}(\zeta,k)(1 - |r_1(\omega k)|^2), & & k \in \Gamma_{5}^{(2)}.
\end{align*}
The jump matrix $v^{(1)}$ cannot be appropriately factorized on the subcontour $\Gamma_1^{(2)} \cup \Gamma_3^{(2)} \cup \Gamma_5^{(2)}$ of $\Gamma^{(2)}$. Hence we introduce $m^{(2)}$ by
$$m^{(2)}(x,t,k) = m^{(1)}(x,t,k) \Delta(\zeta, k),$$
where the $3\times 3$-matrix valued function $\Delta(\zeta,k)$ is defined by
\begin{align}\label{Deltadef}
\Delta(\zeta, k)
= \begin{pmatrix} 
\frac{\delta_1(\zeta, k)}{\delta_3(\zeta, k)} & 0 & 0 \\
0 & \frac{\delta_5(\zeta, k)}{\delta_1(\zeta, k)} & 0 \\
0 & 0 & \frac{\delta_3(\zeta, k)}{\delta_5(\zeta, k)} \end{pmatrix}.
\end{align}
From \eqref{delta1bound} and \eqref{delta1 asymp at inf}, we infer that $\Delta$ and $\Delta^{-1}$ are uniformly bounded for $\zeta \in \mathcal{I}$ and $k \in \mathbb{C}\setminus (\Gamma_1^{(2)}\cup \Gamma_{3}^{(2)} \cup \Gamma_{5}^{(2)})$ and that
\begin{align}\label{Deltaatinfty}
\Delta(\zeta, k) = I + O(k^{-1}) \quad \text{as $k \to \infty$}.
\end{align}
It follows that $m$ satisfies RH problem \ref{RHm} if and only if $m^{(2)}$
satisfies RH problem \ref{RHmj} with $j = 2$, where the jump matrix $v^{(2)}$ is given by $v^{(2)} =  \Delta_-^{-1}v^{(1)} \Delta_+$. A computation gives
\begin{align*}
v_1^{(2)} & = \begin{pmatrix}
\frac{\delta_{1+}}{\delta_{1-}} & -\frac{\delta_{3}\delta_{5}}{\delta_{1-}\delta_{1+}} r_1(k)e^{-t \Phi_{21}} & \frac{\delta_{3}^{2}}{\delta_{1-}\delta_{5}}\beta(k)e^{-t \Phi_{31}} \\
\frac{\delta_{1-} \delta_{1+}}{\delta_{3} \delta_{5}} r_1^{*}(k)e^{t\Phi_{21}} & \frac{\delta_{1-}}{\delta_{1+}}(1-r_1(k)r_1^{*}(k)) & \frac{\delta_{1-}\delta_{3}}{\delta_{5}^{2}}\alpha(k) e^{-t\Phi_{32}} \\
0 & 0 & 1 
\end{pmatrix} \\
& = \begin{pmatrix}
1 - r_1(k)r_1^{*}(k) & - \frac{\delta_{3} \delta_{5}}{\delta_{1-}^{2}} \frac{r_1(k)}{1-r_1(k)r_1^{*}(k)}e^{-t\Phi_{21}} & \frac{\delta_{3}^{2}}{\delta_{1-}\delta_{5}} \beta(k)e^{-t\Phi_{31}} \\
\frac{\delta_{1+}^{2}}{\delta_{3}\delta_{5}} \frac{r_1^{*}(k)}{1-r_1(k)r_1^{*}(k)}e^{t\Phi_{21}} & 1 & -r_{2,a}^{*}(\omega k) \frac{\delta_{1+}\delta_{3}}{\delta_{5}^{2}}e^{-t \Phi_{32}} \\
0 & 0 & 1
\end{pmatrix},
	\\
v_2^{(2)} & = \begin{pmatrix}
1 & 0 & 0 \\
0 & 1-r_{2,r}(\omega k)r_{2,r}^{*}(\omega k) & - \frac{\delta_{1}  \delta_{3}}{\delta_{5}^{2}} r_{2,r}^{*}(\omega k)e^{-t\Phi_{32}} \\
0 & \frac{\delta_{5}^{2}}{\delta_{1}\delta_{3}} r_{2,r}(\omega k)e^{t\Phi_{32}} & 1
\end{pmatrix} ,
 	\\
v_7^{(2)} & = \begin{pmatrix}
1 & - \frac{\delta_{3}\delta_{5}}{\delta_{1}^{2}} r_1(k)e^{-t\Phi_{21}} & \frac{\delta_{3}^{2}}{\delta_{1}\delta_{5}} \beta(k)e^{-t\Phi_{31}} \\
\frac{\delta_{1}^{2}}{\delta_{3}\delta_{5}} r_1^{*}(k)e^{t\Phi_{21}} & 1 - r_1(k)r_1^{*}(k) & \frac{\delta_{1}\delta_{3}}{\delta_{5}^{2}} \alpha(k) e^{-t\Phi_{32}} \\
0 & 0 & 1
\end{pmatrix}.
\end{align*}
The remaining jumps $v_j^{(2)}$ can be obtained from these matrices together with the $\Z_3$ symmetry (\ref{Z3vjsymm}) and are given by
\begin{align*}
& v_3^{(2)} = \begin{pmatrix}
1 & -\frac{\delta_{3+}\delta_{5}}{\delta_{1}^{2}} r_{2,a}^{*}(k)e^{-t\Phi_{21}} & \frac{\delta_{3+}^{2}}{\delta_{1}\delta_{5}} \frac{r_1^{*}(\omega^{2}k)e^{-t\Phi_{31}}}{1-r_1(\omega^{2}k)r_1^{*}(\omega^{2}k)} \\
0 & 1 & 0 \\
- \frac{\delta_{1}\delta_{5}}{\delta_{3-}^{2}} \frac{r_1(\omega^{2}k)e^{t\Phi_{31}}}{1-r_1(\omega^{2}k)r_1^{*}(\omega^{2} k)} & \frac{\delta_{5}^{2}}{\delta_{1} \delta_{3-}} \beta(\omega^{2}k)e^{t \Phi_{32}} & 1-r_1(\omega^{2} k)r_1^{*}(\omega^{2}k)
\end{pmatrix},
 	\\
& v_4^{(2)} = \begin{pmatrix}
1 - r_{2,r}(k)r_{2,r}^{*}(k) & - \frac{\delta_{3}\delta_{5}}{\delta_{1}^{2}} r_{2,r}^{*}(k)e^{-t\Phi_{21}} & 0 \\
\frac{\delta_{1}^{2}}{\delta_{3}\delta_{5}} r_{2,r}(k)e^{t\Phi_{21}} & 1 & 0 \\
0 & 0 & 1
\end{pmatrix},
   	\\
& v_5^{(2)} = \begin{pmatrix}
1 & 0 & 0 \\
\frac{\delta_{1}^{2}}{\delta_{3} \delta_{5-}} \beta(\omega k)e^{t \Phi_{21}} & 1-r_1(\omega k) r_1^{*}(\omega k) & -\frac{\delta_{3}\delta_{1}}{\delta_{5-}^{2}} \frac{r_1(\omega k)e^{-t\Phi_{32}}}{1-r_1(\omega k)r_1^{*}(\omega k)} \\
-\frac{\delta_{1} \delta_{5+}}{\delta_{3}^{2}} r_{2,a}^{*}(\omega^{2}k)e^{t\Phi_{31}} & \frac{\delta_{5+}^{2}}{\delta_{1} \delta_{3}} \frac{r_1^{*}(\omega k)e^{t\Phi_{32}}}{1-r_1(\omega k)r_1^{*}(\omega k)} & 1
\end{pmatrix} ,
   	\\
& v_6^{(2)} = \begin{pmatrix}
1 & 0 & \frac{\delta_{3}^{2}}{\delta_{1}\delta_{5}} r_{2,r}(\omega^{2}k)e^{-t\Phi_{31}} \\
0 & 1 & 0 \\
- \frac{\delta_{1}\delta_{5}}{\delta_{3}^{2}} r_{2,r}^{*}(\omega^{2}k)e^{t\Phi_{31}} & 0 & 1-r_{2,r}(\omega^{2}k)r_{2,r}^{*}(\omega^{2}k)
\end{pmatrix},
	\\
& v_8^{(2)} = \begin{pmatrix}
1-r_1(\omega^{2}k)r_1^{*}(\omega^{2}k) & \frac{\delta_{3} \delta_{5}}{\delta_{1}^{2}} \alpha(\omega^{2} k) e^{-t\Phi_{21}} & \frac{\delta_{3}^{2}}{\delta_{1}\delta_{5}} r_1^{*}(\omega^{2}k)e^{-t\Phi_{31}} \\
0 & 1 & 0 \\
- \frac{\delta_{1} \delta_{5}}{\delta_{3}^{2}} r_1(\omega^{2}k)e^{t\Phi_{31}} & \frac{\delta_{5}^{2}}{\delta_{1}\delta_{3}} \beta(\omega^{2}k)e^{t \Phi_{32}} & 1
\end{pmatrix},
	\\
& v_9^{(2)}= \begin{pmatrix}
1 & 0 & 0 \\
\frac{\delta_{1}^{2}}{\delta_{3}\delta_{5}} \beta(\omega k)e^{t\Phi_{21}} & 1 & - \frac{\delta_{1}\delta_{3}}{\delta_{5}^{2}} r_1(\omega k) e^{-t \Phi_{32}} \\
\frac{\delta_{1}\delta_{5}}{\delta_{3}^{2}} \alpha(\omega k)e^{t \Phi_{31}} & \frac{\delta_{5}^{2}}{\delta_{1}\delta_{3}} r_1^{*} (\omega k) e^{t \Phi_{32}} & 1 - r_1(\omega k)r_1^{*}(\omega k) 
\end{pmatrix}.
 \end{align*}

\subsection{Third transformation} \label{section4.5}
The $(11)$-entry of $v_{1}^{(2)}$ can be rewritten as follows:
\begin{align*}
(v_{1}^{(2)})_{11} = 1- r_1(k)r_1^{*}(k) = 1- \frac{\delta_{1+}}{\delta_{1-}}\hat{r}_{1}(k)\frac{\delta_{1+}}{\delta_{1-}}\hat{r}_{1}^{*}(k).
\end{align*}
Therefore, using the general identity
$$
\begin{pmatrix}
1+f_{1}f_{3} & f_{1} & f_{2} \\
f_{3} & 1 & f_{4} \\
0 & 0 & 1
\end{pmatrix} = \begin{pmatrix}
1 & f_{1,a} & f_{2}-f_{1}f_{4} \\
0 & 1 & 0 \\
0 & 0 & 1
\end{pmatrix} \begin{pmatrix}
1 + f_{1,r}f_{3,r} & f_{1,r} & 0 \\
f_{3,r} & 1 & 0 \\
0 & 0 & 1
\end{pmatrix} \begin{pmatrix}
1 & 0 & 0 \\
f_{3,a} & 1 & f_{4} \\
0 & 0 & 1
\end{pmatrix}
,$$
where $f_j  = f_{j,a} + f_{j,r}$, as well as the relation
$$\beta(k) - r_{2,a}^{*}(\omega k) r_1(k) = r_{2,a}(\omega^2 k), \qquad k \in \R_+,$$
we can factorize $v_1^{(2)}$ for $k \in \Gamma_1^{(2)}$ as follows:
\begin{align} \label{v12factorization}
v_1^{(2)} = \begin{pmatrix}
1 - \frac{\delta_{1+}^{2}}{\delta_{1-}^{2}} \hat{r}_{1}(k)\hat{r}_{1}^{*}(k) & - \frac{\delta_{3}\delta_{5}}{\delta_{1-}^{2}} \hat{r}_{1}(k) e^{-t\Phi_{21}} & \frac{\delta_{3}^{2}}{\delta_{1-}\delta_{5} } \beta(k)e^{-t\Phi_{31}} \\
\frac{\delta_{1+}^{2}}{\delta_{3}\delta_{5}} \hat{r}_{1}^{*}(k) e^{t\Phi_{21}} & 1 & -r_2^{*}(\omega k) \frac{\delta_{1+}\delta_{3}}{\delta_{5}^{2}} e^{-t \Phi_{32}} \\
0 & 0 & 1
\end{pmatrix}
= v_1^{(2)A}v_{1,r}^{(2)}v_1^{(2)B},
\end{align}
where
\begin{align*}
& v_1^{(2)A}=
\begin{pmatrix}
1 & - \frac{\delta_{3} \delta_{5}}{\delta_{1-}^{2}} \hat{r}_{1,a}(k)e^{-t \Phi_{21}} & \frac{\delta_{3}^{2}}{\delta_{1-}\delta_{5}} r_{2,a}(\omega^{2}k)e^{-t\Phi_{31}} \\
0 & 1 & 0 \\
0 & 0 & 1
\end{pmatrix},
	\\
& v_{1,r}^{(2)} = \begin{pmatrix}
1-\frac{\delta_{1+}^{2}}{\delta_{1-}^{2}}\hat{r}_{1,r}^{*}(k) \hat{r}_{1,r}(k) & - \frac{\delta_{3}\delta_{5}}{\delta_{1-}^{2}} \hat{r}_{1,r}(k)e^{-t \Phi_{21}} & 0 \\
\frac{\delta_{1+}^{2}}{\delta_{3}\delta_{5}} \hat{r}_{1,r}^{*}(k)e^{t \Phi_{21}} & 1 & 0 \\
0 & 0 & 1
\end{pmatrix},
	\\	
& v_1^{(2)B} = \begin{pmatrix}
1 & 0 & 0 \\
\frac{\delta_{1+}^{2}}{\delta_{3}\delta_{5}} \hat{r}_{1,a}^{*}(k)e^{t \Phi_{21}} & 1 & - \frac{\delta_{1+}\delta_{3}}{\delta_{5}^{2}} r_{2,a}^{*}(\omega k)e^{-t \Phi_{32}} \\
0 & 0 & 1
\end{pmatrix}.
\end{align*}

Similarly, using the general identity
\begin{align*}
& \begin{pmatrix}
1 & f_{1} & f_{2} \\
f_{3} & 1+f_{1} f_{3} & f_{4} \\
0 & 0 & 1
\end{pmatrix} = \begin{pmatrix}
1 & 0 & 0 \\
f_{3,a} & 1 & f_{4,a}-f_{2,a}f_{3,a} \\
0 & 0 & 1
\end{pmatrix} \\
& \times \begin{pmatrix}
1 & f_{1,r} & f_{2,r} \\
f_{3,r} & 1+f_{1,r} f_{3,r} & f_{4,r} - f_{2,a} f_{3,r} - f_{2,r} f_{3,a} \\
0 & 0 & 1
\end{pmatrix} \begin{pmatrix}
1 & f_{1,a} & f_{2,a} \\
0 & 1 & 0 \\
0 & 0 & 1
\end{pmatrix},
\end{align*}
where $f_j  = f_{j,a} + f_{j,r}$, as well as the relation
$$\alpha(k) - r_1^*(k) \beta(k) = -r_{2,a}^{*}(\omega k) - r_1^*(k)r_{2,a}(\omega^2 k), \qquad k \in \R_+,$$
we can factorize $v_7^{(2)}$ for $k \in \Gamma_7^{(2)}$ as follows:
\begin{align*}
v_7^{(2)} = & \begin{pmatrix}
1 & - \frac{\delta_{3}\delta_{5}}{\delta_{1}^{2}} r_1(k)e^{-t\Phi_{21}} & \frac{\delta_{3}^{2}}{\delta_{1}\delta_{5}} \beta(k)e^{-t\Phi_{31}} \\
\frac{\delta_{1}^{2}}{\delta_{3}	\delta_{5}} r_1^{*}(k)e^{t\Phi_{21}} & 1 - r_1(k)r_1^{*}(k) & \frac{\delta_{1}\delta_{3}}{\delta_{5}^{2}} \alpha(k) e^{-t\Phi_{32}} \\
0 & 0 & 1
\end{pmatrix}
= v_7^{(2)A}v_{7,r}^{(2)}v_7^{(2)B},
\end{align*}
where
\begin{align*}
& v_7^{(2)A} = \begin{pmatrix}
1 & 0 & 0 \\
\frac{\delta_{1}^{2}}{\delta_{3}\delta_{5}}r_{1,a}^{*}(k)e^{t\Phi_{21}} & 1 & -\frac{\delta_{1}\delta_{3}}{\delta_{5}^{2}} \big( r_{2,a}^{*}(\omega k) + r_{1,a}^{*}(k)r_{2,a}(\omega^{2}k) \big) e^{-t \Phi_{32}} \\
0 & 0 & 1
\end{pmatrix},
	\\
& v_{7,r}^{(2)} = \begin{pmatrix}
1 & - \frac{\delta_{3}\delta_{5}}{\delta_{1}^{2}} r_{1,r}(k)e^{-t\Phi_{21}} & \frac{\delta_{3}^{2}}{\delta_{1}\delta_{5}} \beta_{r}(k)e^{-t\Phi_{31}} \\
\frac{\delta_{1}^{2}}{\delta_{3}\delta_{5}} r_{1,r}^{*}(k)e^{t \Phi_{21}} & 1-|r_{1,r}(k)|^2 & \frac{\delta_{1}\delta_{3}}{\delta_{5}^{2}} r_{1,r}^{*}(k) \big( r_{1,r}(k)r_{2,a}^{*}(\omega k) -r_{2,a}(\omega^{2}k)\big)e^{-t\Phi_{32}} \\
0 & 0 & 1
\end{pmatrix},
	\\
& v_7^{(2)B} = \begin{pmatrix}
1 & - \frac{\delta_{3}\delta_{5}}{\delta_{1}^{2}} r_{1,a}(k)e^{-t\Phi_{21}} & \frac{\delta_{3}^{2}}{\delta_{1}\delta_{5}} \beta_{a}(k)e^{-t\Phi_{31}} \\
0 & 1 & 0 \\
0 & 0 & 1
\end{pmatrix},
\end{align*}
and
\begin{align*}
\beta_{r}(k) := r_{1,r}(k)r_{2,a}^{*}(\omega k), \qquad \beta_{a}(k) := r_{2,a}(\omega^{2}k) + r_{1,a}(k)r_{2,a}^{*}(\omega k).
\end{align*}

\begin{figure}
\begin{center}
\bigskip\bigskip
 \begin{overpic}[width=.55\textwidth]{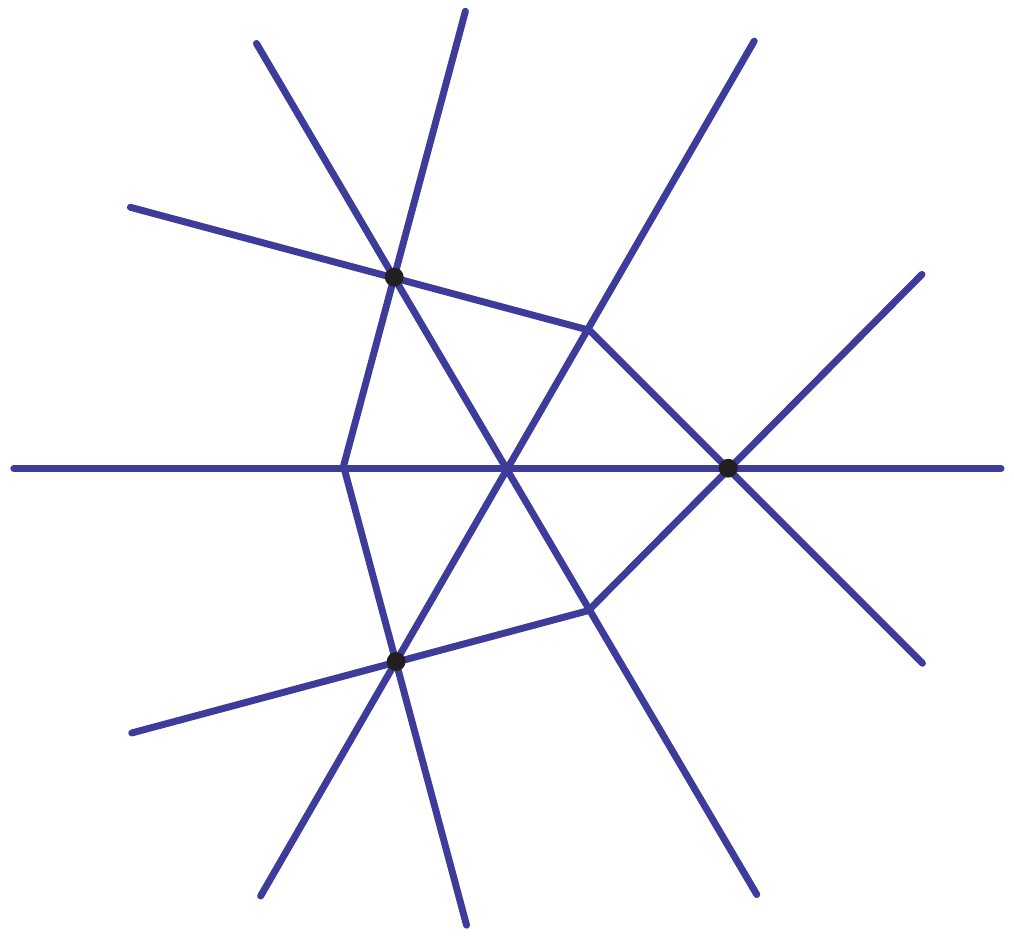}
 \put(102,42){\small $\re k$}
 \put(85,52){\small $V_1$}
 \put(57,50){\small $V_2$}
 \put(57,40){\small $V_3$}
 \put(85,38){\small $V_4$}
 \put(70.5,41){\small $k_0$}
 \put(43,65){\small $\omega k_0$}
 \put(41,22){\small $\omega^2 k_0$}
   \end{overpic}
   \begin{figuretext}\label{Vjs.pdf}
      The open sets $\{V_j\}_1^4$ in the complex $k$-plane.
      \end{figuretext}
   \end{center}
\end{figure}

Let $V_j \equiv V_j(\zeta) \subset \C$, $j = 1, \dots, 4$, denote the open subsets of the complex $k$-plane displayed in Figure \ref{Vjs.pdf}.
Define the sectionally analytic function $m^{(3)}$ by
$$m^{(3)}(x,t,k) = m^{(2)}(x,t,k)H(x,t,k),$$
where $H$ is defined for $k \in D_1 \cup D_6$ by
\begin{align}\label{Hdef}
H(x,t,k) = \begin{cases} (v_1^{(2)B})^{-1}, & k \in V_1, \\
 (v_7^{(2)B})^{-1}, & k \in V_2, \\
v_7^{(2)A}, & k \in V_3, \\
v_1^{(2)A}, & k \in V_{4}, \\
I, & \text{elsewhere in $D_1 \cup D_6$,}
\end{cases}
\end{align}
and extended to all of $\C \setminus \Gamma$ by means of the symmetry
$H(x,t,k) =  \mathcal{A} H(x,t,\omega k)\mathcal{A}^{-1}$.
Let $\Gamma^{(3)}$ be the contour displayed in Figure \ref{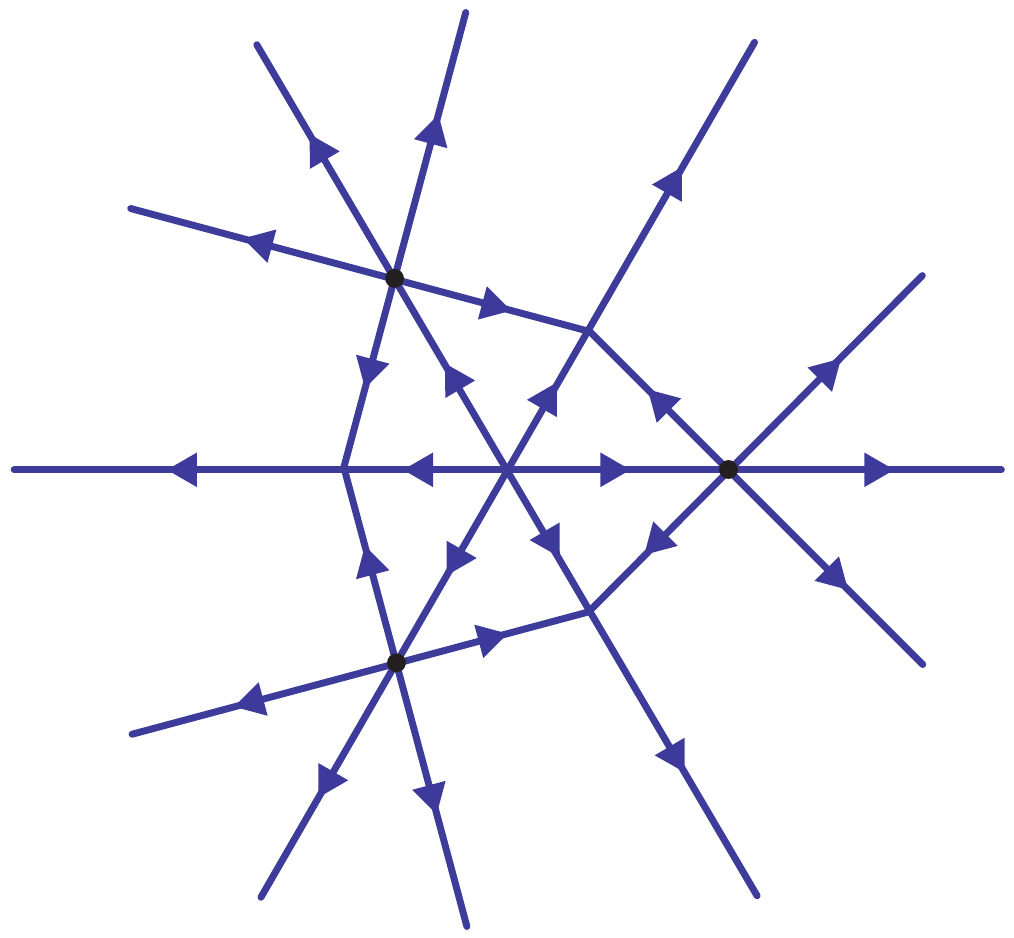}.

\begin{figure}
\begin{center}
 \begin{overpic}[width=.7\textwidth]{Gamma3.pdf}
  \put(101,45){\small $\Gamma^{(3)}$}
 \put(83.5,54){\small $1$}
 \put(65,54.5){\small $2$}
 \put(65,35){\small $3$}
 \put(84,35){\small $4$}
 \put(68,73){\small $5$}
 \put(55.5,52){\small $6$}
 \put(60,42){\small $7$}
 \put(86,42){\small $8$}
 \put(70.5,41){\small $k_0$}
 \put(43,65){\small $\omega k_0$}
 \put(41,22){\small $\omega^2 k_0$}
   \end{overpic}
     \begin{figuretext}\label{Gamma3.pdf}
       The contour $\Gamma^{(3)}$ in the complex $k$-plane.
     \end{figuretext}
     \end{center}
\end{figure}

\begin{lemma}\label{Hlemma}
$H(x,t,k)$ is uniformly bounded for $k \in \mathbb{C}\setminus \Gamma^{(3)}$, $t> 0$, and $\zeta \in \mathcal{I}$.
Moreover, $H = I + O(k^{-1})$ as $k \to \infty$.
\end{lemma}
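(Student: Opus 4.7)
The plan is to verify both statements sector by sector. The $\mathcal{A}$-symmetry in the definition of $H$ reduces matters to bounding $H$ in $D_1 \cup D_6$, where $H \equiv I$ outside $V_1 \cup V_2 \cup V_3 \cup V_4$; the work therefore reduces to estimating the four triangular building blocks $(v_1^{(2)B})^{-1}$, $(v_7^{(2)B})^{-1}$, $v_7^{(2)A}$, $v_1^{(2)A}$ inside the respective sectors.

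Each off-diagonal entry of these matrices has the schematic form (bounded $\delta$-ratio)$\cdot$(estimate from Lemma \ref{decompositionlemma}(b))$\cdot\, e^{\pm t\Phi_{ij}}$, with the $\delta$-ratios uniformly bounded by \eqref{delta1bound}. The key point, which I would establish first, is that $\Gamma^{(3)}$ has been laid out so that in each $V_j$ the exponent $\pm \Phi_{ij}$ appearing in the relevant building block has strictly negative real part and moreover satisfies $|\re(\pm\Phi_{ij})| \geq c\,|\re \Phi_{21}|$. This can be read off from the explicit formula $\Phi_{21} = -i\sqrt{3}\, k(\zeta - k)$ and the signature tables in Figures \ref{rePhi23.pdf}--\ref{rePhi12.pdf}: in particular $\re \Phi_{21} < 0$ on $V_1 \cup V_3$ and $\re \Phi_{21} > 0$ on $V_2 \cup V_4$, with the analogous sign determinations for $\Phi_{31}, \Phi_{32}$. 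Combining with the permissible growth $e^{t|\re \Phi_{21}|/4}$ in the bounds on $r_{j,a}, \hat{r}_{j,a}^{*}$ then yields an overall factor of the form $e^{-(3/4)t|\re(\pm \Phi_{ij})|}$ for each off-diagonal entry, which is $\leq 1$ uniformly in $(t,k,\zeta)$; together with \eqref{delta1bound} this gives the uniform boundedness.

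The asymptotic $H = I + O(k^{-1})$ is a byproduct of the same estimates. Inside each $V_j$ the off-diagonal entries of $H$ are dominated either by the explicit prefactor $C/(1+|k|)$ (coming from \eqref{estimate r2astar at infty} and \eqref{estimate r1ahatstar at infty}) or, in entries where the reflection coefficient does not decay in $k$ (such as those involving $r_{1,a}$ or $\beta_a$ in $V_2$), by an exponential $e^{-c t|k|^2}$, since $|\re(\pm \Phi_{ij})| \sim |k|^2$ along the outgoing rays bounding $V_j$. Outside $V_1 \cup \cdots \cup V_4$ and their $\omega$-rotations one has $H \equiv I$, so $H = I + O(k^{-1})$ globally. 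The main obstacle is the bookkeeping: one has to verify for each of the off-diagonal positions in each of the four sectors that the sign of $\re \Phi_{ij}$ is compatible with the built-in exponential factor and dominates $|\re \Phi_{21}|/4$. This is routine but tedious, and is where the specific angles of the rays defining $\Gamma^{(3)}$ (chosen to coincide with the steepest descent directions of $\Phi_{21}$ through $k_0$) enter in an essential way.
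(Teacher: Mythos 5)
Your overall strategy is the same as the paper's: reduce to $D_1\cup D_6$ by the $\mathcal{A}$-symmetry, then bound the four triangular blocks sector by sector using \eqref{delta1bound}, the bounds of Lemma \ref{decompositionlemma}(b), and the sign of the relevant $\re\Phi_{ij}$, with the net exponent $e^{-\frac{3}{4}t|\re\Phi_{ij}|}$ absorbing the permissible growth. Two remarks. First, your "key point" $|\re(\pm\Phi_{ij})|\geq c\,|\re\Phi_{21}|$ is not actually the mechanism needed: the growth allowances in \eqref{estimate r2astar at infty}--\eqref{estimate r1ahatstar at infty} are stated at the argument at which $r_{2,a}^*$ is evaluated, so via the identities $\Phi_{21}(\zeta,\omega k)=\Phi_{32}(\zeta,k)$, $\Phi_{21}(\zeta,\omega^{2}k)=-\Phi_{31}(\zeta,k)$ (equation \eqref{Phi relation 21 and 32}) the allowance $e^{\frac{t}{4}|\re\Phi_{21}(\zeta,\omega k)|}=e^{\frac{t}{4}|\re\Phi_{32}(\zeta,k)|}$ already matches the decaying exponential $e^{-t\re\Phi_{32}}$ in the same entry, and no comparison between different phase functions is required. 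If you insist on comparing everything to $|\re\Phi_{21}(\zeta,k)|$ you would need the constant $c$ to exceed $1/4$, which is an extra verification the paper avoids. Second, and more substantively, your treatment of the entries involving $r_{1,a}$ and $\beta_a$ (the blocks $v_7^{(2)A}$, $(v_7^{(2)B})^{-1}$ living on $V_2\cup V_3$) misreads the geometry: $V_2$ and $V_3$ are \emph{bounded} lens-shaped regions between $0$ and $k_0$ (see Figure \ref{Vjs.pdf}), so there is no $k\to\infty$ issue there at all, and the paper disposes of them by noting that all entries are continuous on the compact set $\bar V_2$ (with the $(1+|k|)$ factors from Lemma \ref{decompositionlemma}(b) bounded by compactness and the exponentials bounded by $1$). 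Your proposed substitute, $e^{-ct|k|^2}=O(k^{-1})$, would in any case not be uniform in $t$ as $t\to0^+$, so if those sectors were unbounded your argument would only give the boundedness claim for $t$ bounded away from zero. Since they are bounded, the conclusion survives, but the step as written addresses a non-existent difficulty while the actual (easy) argument is compactness. The genuinely unbounded sectors $V_1$, $V_4$ are exactly the ones where your $C/(1+|k|)$ prefactors appear, and there your argument coincides with the paper's.
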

\begin{proof}
We present the proof for $k \in V_{1} \cup V_{2}$; the proof for $k \in V_3 \cup V_4$ is similar. Note that $V_{j} \subset U_{j}$, $j=1,...,4$ (see Figures \ref{Ujs.pdf} and \ref{Vjs.pdf}). Note also the identities
\begin{align}\label{Phi relation 21 and 32} 
& \Phi_{21}(\zeta,\omega k) = \Phi_{32}(\zeta,k), 
\qquad \Phi_{21}(\zeta,\omega^{2}k) = -\Phi_{31}(\zeta,k), 
\qquad \Phi_{21} + \Phi_{32} = \Phi_{31}. 
\end{align}
If $k \in \bar{V}_{1}$, then $\omega k \in \bar{U}_{2}$ and (see Figures \ref{rePhi23.pdf} and \ref{rePhi12.pdf})
\begin{align*}
\re \Phi_{21}(\zeta, k) \leq 0, \quad \re \Phi_{32}(\zeta, k) \geq 0.
\end{align*}
Therefore, using \eqref{Phi relation 21 and 32}, \eqref{estimate r2astar at infty}, \eqref{estimate r1ahatstar at infty}, and \eqref{delta1bound}, we find
\begin{align*}
& |((v_{1}^{(2)B})^{-1})_{21}| = \bigg| \frac{\delta_{1}^{2}}{\delta_{3}\delta_{5}} \hat{r}_{1,a}^{*}(k)e^{t\Phi_{21}}\bigg| \leq \frac{C}{1+|k|}e^{-ct |\re\Phi_{21}|}, \qquad k \in V_{1}, \\
& |((v_{1}^{(2)B})^{-1})_{23}| = \bigg| \frac{\delta_{1}\delta_{3}}{\delta_{5}^{2}} r_{2,a}^{*}(\omega k) e^{-t\Phi_{32}} \bigg|\leq \frac{C}{1+|k|}e^{-ct |\re\Phi_{32}|}, \qquad k \in V_{1}.
\end{align*}
This proves the claim for $k \in V_{1}$. All entries of $(v_{7}^{(2)B})^{-1}$ are continuous functions on $\bar{V}_{2}$. Since $\bar{V}_{2}$ is compact, the claim follows also for $k \in V_{2}$.
\end{proof}
It follows from Lemma \ref{Hlemma} that $m$ satisfies RH problem \ref{RHm} if and only if $m^{(3)}$
satisfies RH problem \ref{RHmj} with $j = 3$, where  $\Gamma^{(3)}$ is the contour displayed in Figure \ref{Gamma3.pdf} and the jump matrix $v^{(3)}$ is given for $-\pi/3 < \arg k\leq \pi/3$ by
\begin{align*}
v_1^{(3)} & = v_1^{(2)B} = \begin{pmatrix}
1 & 0 & 0 \\
\frac{\delta_{1}^{2}}{\delta_{3}\delta_{5}} \hat{r}_{1,a}^{*}(k)e^{t \Phi_{21}} & 1 &- \frac{\delta_{1}\delta_{3}}{\delta_{5}^{2}} r_{2,a}^{*}(\omega k)e^{-t \Phi_{32}} \\
0 & 0 & 1
\end{pmatrix},
	\\
v_2^{(3)} & = (v_7^{(2)B})^{-1}
= \begin{pmatrix}
1 & \frac{\delta_{3}\delta_{5}}{\delta_{1}^{2}}r_{1,a}(k)e^{-t\Phi_{21}} & -\frac{\delta_{3}^{2}}{\delta_{1}\delta_{5}}\big( r_{2,a}(\omega^{2}k) + r_{1,a}(k)r_{2,a}^{*}(\omega k) \big)e^{-t\Phi_{31}} \\
0 & 1 & 0 \\
0 & 0 & 1
\end{pmatrix},
	\\
 v_3^{(3)} & = (v_7^{(2)A})^{-1}
= \begin{pmatrix}
1 & 0 & 0 \\
- \frac{\delta_{1}^{2}}{\delta_{3}\delta_{5}}r_{1,a}^{*}(k)e^{t\Phi_{21}} & 1 & \frac{\delta_{1}\delta_{3}}{\delta_{5}^{2}}\big( r_{2,a}^{*}(\omega k)+r_{1,a}^{*}(k)r_{2,a}(\omega^{2}k) \big)e^{-t\Phi_{32}} \\
0 & 0 & 1
\end{pmatrix},
	\\
 v_4^{(3)} & = v_1^{(2)A} =  \begin{pmatrix}
1 & - \frac{\delta_{3}\delta_{5}}{\delta_{1}^{2}}\hat{r}_{1,a}(k)e^{-t \Phi_{21}} & \frac{\delta_{3}^{2}}{\delta_{1}\delta_{5}}r_{2,a}(\omega^{2}k)e^{-t\Phi_{31}} \\
0 & 1 & 0 \\
0 & 0 & 1
\end{pmatrix},
	\\
 v_5^{(3)} & = v_2^{(2)}  = \begin{pmatrix}
1 & 0 & 0 \\
0 & 1-r_{2,r}(\omega k)r_{2,r}^{*}(\omega k) & - \frac{\delta_{1}\delta_{3}}{\delta_{5}^{2}}r_{2,r}^{*}(\omega k)e^{-t\Phi_{32}} \\
0 & \frac{\delta_{5}^{2}}{\delta_{1}\delta_{3}}r_{2,r}(\omega k)e^{t\Phi_{32}} & 1
\end{pmatrix},
	\\
 v_6^{(3)}& = v_7^{(2)B}v_2^{(2)} \mathcal{A}^{-1} v_7^{(2)A}(x,t,\omega^2 k) \mathcal{A}
	= \begin{pmatrix}
1 & \frac{\delta_{3}\delta_{5}}{\delta_{1}^{2}}g(k)e^{-t\Phi_{21}} & \frac{\delta_{3}^{2}}{\delta_{1}\delta_{5}}f(k) e^{-t \Phi_{31}} \\
0 & 1 - r_{2,r}(\omega k) r_{2,r}^{*}(\omega k) & -\frac{\delta_{1}\delta_{3}}{\delta_{5}^{2}}r_{2,r}^{*}(\omega k)e^{-t \Phi_{32}} \\
0 & \frac{\delta_{5}^{2}}{\delta_{1}\delta_{3}}r_{2,r}(\omega k) e^{t \Phi_{32}} & 1
\end{pmatrix},
	\\
v_7^{(3)} & =  v_{7,r}^{(2)} = \begin{pmatrix}
1 & - \frac{\delta_{3}\delta_{5}}{\delta_{1}^{2}}r_{1,r}(k)e^{-t\Phi_{21}} & \frac{\delta_{3}^{2}}{\delta_{1}\delta_{5}} r_{1,r}(k)r_{2,a}^{*}(\omega k) e^{-t\Phi_{31}} \\
\frac{\delta_{1}^{2}}{\delta_{3}\delta_{5}}r_{1,r}^{*}(k)e^{t \Phi_{21}} & 1-r_{1,r}(k)r_{1,r}^{*}(k) & \frac{\delta_{1}\delta_{3}}{\delta_{5}^{2}}r_{1,r}^{*}(k) h(k) e^{-t\Phi_{32}} \\
0 & 0 & 1
\end{pmatrix},
	\\
v_8^{(3)} & = v_{1,r}^{(2)} = \begin{pmatrix}
1- \frac{\delta_{1+}^{2}}{\delta_{1-}^{2}} \hat{r}_{1,r}(k)\hat{r}_{1,r}^{*}(k) & - \frac{\delta_{3}\delta_{5}}{\delta_{1-}^{2}}\hat{r}_{1,r}(k)e^{-t \Phi_{21}} & 0 \\
\frac{\delta_{1+}^{2}}{\delta_{3}\delta_{5}}\hat{r}_{1,r}^{*}(k)e^{t \Phi_{21}} & 1 & 0 \\
0 & 0 & 1
\end{pmatrix},
\end{align*}			
and extended to the remainder of $\Gamma^{(3)}$ by means of the first symmetry in (\ref{Z3vjsymm}).
Here the functions $f(k) \equiv f(x,t,k)$, $g(k) \equiv g(x,t,k)$, and $h(k) \equiv h(x,t,k)$ are defined by
\begin{align*}
& f(k) =  r_{2,a}(\omega^{2}k) + r_{1,a}(k)r_2^{*}(\omega k) + r_{1,a}^{*}(\omega^{2}k),
	\\
& g(k) = r_{2,r}(\omega k) \big(  r_{2,a}(\omega^{2}k)+r_{1,a}(k)r_{2,a}^{*}(\omega k) \big) - r_{1,a}(k)\big( 1-r_{2,r}(\omega k)r_{2,r}^{*}(\omega k) \big) \\
& \qquad \quad -r_{2,a}^{*}(k) - r_{1,a}^{*}(\omega^{2} k) r_{2,a}(\omega k),\\
& h(k) = r_{1,r}(k) r_{2,a}^{*}(\omega k) -r_{2,a}(\omega^2 k).
\end{align*}

The next lemma establishes bounds on $f$ and $g$ and their $x$-derivatives.
\begin{lemma}
For $k \in \Gamma_{3}$, and $l=0,1$, we have
\begin{align}\label{bound on f and g}
& |\partial_{x}^{l}f(k)| \leq C |k|e^{\frac{t}{4}|\re \Phi_{21}(\zeta, k)|},  & & |\partial_{x}^{l}g(k)| \leq C |k|e^{\frac{t}{4}|\re \Phi_{21}(\zeta, k)|}.
\end{align}
\end{lemma}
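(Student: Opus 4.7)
The strategy is to exploit a cancellation at $k=0$: both $f$ and $g$ vanish there, so that telescoping around the origin and invoking the estimates of Lemma \ref{decompositionlemma} yield the required factor of $|k|$.

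First I would verify $f(0) = g(0) = 0$. Passing to the limit $k \to 0$ in \eqref{estimate r2astar at 0} and \eqref{estimate r1a at 0} gives $r_{1,a}(x,t,0) = r_1(0)$ and $r_{2,a}(x,t,0) = r_2(0)$, whence $r_{1,r}(x,t,0) = 0 = r_{2,r}(x,t,0)$. Combining $r_1(0) = \omega$ with the corresponding value of $r_2(0)$ identified in \cite{CharlierLenells} (the two are linked by the symmetry structure of the scattering data at the origin) and using the identity $1+\omega+\omega^2 = 0$, a direct computation gives
\begin{align*}
f(0) = r_2(0) + r_1(0)\,r_2^*(0) + r_1^*(0) = 0, \qquad g(0) = -r_1(0) - r_2^*(0) - r_1^*(0)\,r_2(0) = 0.
\end{align*}

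Next, for $f$ I would subtract $f(0)$ and rewrite $f$ as a sum of telescoping differences,
\begin{align*}
f(k) & = [r_{2,a}(\omega^2 k) - r_2(0)] + [r_{1,a}(k) - r_1(0)]\,r_2^*(\omega k) \\
& \quad + r_1(0)\,[r_2^*(\omega k) - r_2^*(0)] + [r_{1,a}^*(\omega^2 k) - r_1^*(0)],
\end{align*}
in which every bracket is a difference of a function from its value at the origin. Each bracket is then controlled by $C|k|\,e^{\frac{t}{4}|\re\Phi_{21}(\zeta,k)|}$ via \eqref{estimate r2astar at 0}--\eqref{estimate r1a at 0} (interpreting $r_2^*$ off the real axis as its analytic continuation $r_{2,a}^*$), while the accompanying bounded factors are controlled by the rapid decay of the reflection coefficients on $\Gamma_3$. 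Summing gives the $l=0$ bound on $f$; the $l=1$ case is identical in structure upon invoking the $\partial_x$-versions of those estimates and noting that $r_1(0)$, $r_1^*(0)$, $r_2(0)$, $r_2^*(0)$ are all $x$-independent.

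The argument for $g$ is analogous. The extra ingredient is the pointwise bound $|\partial_x^l r_{2,r}(x,t,k)| \leq C|k|\,e^{\frac{t}{4}|\re\Phi_{21}|}$ (and likewise for $r_{2,r}^*$), which follows from the decomposition $r_{2,r} = (r_2 - r_2(0)) - (r_{2,a} - r_2(0))$ together with smoothness of $r_2$ and \eqref{estimate r2astar at 0}. Inserting the telescoped expansions into the formula defining $g$, using $g(0) = 0$, and regrouping yields $|g(k)| \leq C|k|\,e^{\frac{t}{4}|\re\Phi_{21}|}$; the $l=1$ case is handled identically. The main obstacle, in my view, is the bookkeeping: $g$ contains products of two $r_{j,a}$-type factors, each carrying an $e^{\frac{t}{4}|\re\Phi_{21}|}$ weight, and one must verify that such products remain controlled by a single such weight. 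This is absorbed into the constant $C$ by choosing the analytic/remainder decomposition of Lemma \ref{decompositionlemma} with a little extra slack in the exponential constant, a standard adjustment in the Deift--Zhou framework.
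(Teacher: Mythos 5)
Your proof is correct and follows essentially the same route as the paper: the key identity $r_{1,a}(0)r_2^*(0)+r_{1,a}^*(0)+r_{2,a}(0)=\omega+\bar{\omega}+1=0$ and the telescoping of $f$ around $k=0$ using \eqref{estimate r2astar at 0} and \eqref{estimate r1a at 0} are exactly the paper's argument. The only divergence is in the treatment of $g$: the paper rewrites $g(k)=r_{2,r}(\omega k)\big(r_{2,a}(\omega^2k)+r_{1,a}(k)r_{2,a}^*(\omega k)\big)+r_{1,a}(k)r_{2,r}(\omega k)r_{2,r}^*(\omega k)-f^*(\omega^2 k)$ so that the bound reduces to the one already proved for $f$ plus the vanishing of $r_{2,r}$ and $\partial_x r_{2,r}$ at the origin, whereas you telescope $g$ directly; both hinge on the same pointwise bound $|r_{2,r}(\omega k)|\leq C|k|$, and your worry about stacking two exponential weights does not in fact arise since the factors multiplying the $e^{\frac{t}{4}|\re\Phi_{21}|}$-weighted differences are evaluated on $\R_-$, where $\re\Phi_{21}=0$.
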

\begin{proof}
By \eqref{estimate r2astar at 0} and \eqref{estimate r1a at 0}, we have $r_{2,r}^{*}(0) = 0$, $r_{2,a}^{*}(0) = r_2^{*}(0)$, $r_{1,r}(0) = 0$, and $r_{1,a}(0) = r_1(0)$. Since $r_1(0) = \omega$ and $r_2(0) = 1$ (see \cite{CharlierLenells}), we deduce that
\begin{align*}
& r_{2,a}^{*}(0) = r_2^{*}(0) = 1 \quad \mbox{ and } \quad r_{1,a}(0) = r_1(0) = \omega.
\end{align*}
In particular,
\begin{align}\label{r2r1at0}
r_{1,a}(0)r_2^{*}(0)+r_{1,a}^{*}(0) + r_{2,a}(0) = \omega + \bar{\omega} + 1 = 0,
\end{align}

To derive the estimate for $f$, we write
\begin{align*}\nonumber
f(k)=&\; r_{1,a}^{*}(\omega^{2}k) + r_{2,a}(\omega^{2}k) + r_{1,a}(k)r_2^{*}(\omega k)
	\\
= &\; r_{1,a}^*(\omega^2 k)-r_{1,a}^{*}(0) + r_{2,a}(\omega^2 k)-r_{2,a}(0)+(r_{1,a}(k)-r_{1,a}(0))r_2^{*}(\omega k)
	\\
&+r_{1,a}(0)(r_2^{*}(\omega k)-r_2^{*}(0)) + r_{1,a}(0)r_2^{*}(0)+r_{1,a}^{*}(0) + r_{2,a}(0).
\end{align*}
Using (\ref{r2r1at0}) and the fact that $\Gamma^{(6)}_{3} \subset (\omega^{2} \mathbb{R}_{-} \cap U_{2})$, the inequalities \eqref{estimate r2astar at 0} and \eqref{estimate r1a at 0} imply 
\begin{align*}
|f(k)| \leq &\; |r_{1,a}^*(\omega^2 k)-r_{1,a}^{*}(0)| + |r_{2,a}(\omega^2 k)-r_{2,a}(0)|+C|r_{1,a}(k)-r_{1,a}(0)|+C|k| \nonumber 
	\\ \nonumber
\leq &\; C |k| (e^{\frac{t}{4}|\re \Phi_{21}(\zeta, \omega \bar{k})|}+e^{\frac{t}{4}|\re \Phi_{21}(\zeta, \omega \bar{k})|}+e^{\frac{t}{4}|\re \Phi_{21}(\zeta, k)|}) 
	\\
\leq &\; C |k|e^{\frac{t}{4}|\re \Phi_{21}(\zeta, k)|}, \qquad k \in \Gamma^{(6)}_{3}. 
\end{align*}
The estimate for $\partial_{x}f$ is derived in a similar way. Writing
\begin{align*}
g(k) = r_{2,r}(\omega k) \big(  r_{2,a}(\omega^{2}k)+r_{1,a}(k)r_{2,a}^{*}(\omega k) \big) + r_{1,a}(k) r_{2,r}(\omega k)r_{2,r}^{*}(\omega k) - f^{*}(\omega^{2}k),
\end{align*}
and using that $r_{2,r}$ and $\partial_xr_{2,r}$ vanish at $k = 0$, the estimates for $g$ and $\partial_x g$ follow from the estimates for $f$ and $\partial_x f$.
\end{proof}

\begin{lemma}\label{v3lemma}
The jump matrix $v^{(3)}$ (resp. $\partial_{x}v^{(3)}$) converges to the identity matrix $I$ (resp. to the zero matrix $0$) as $t \to \infty$ uniformly for $\zeta \in \mathcal{I}$ and $k \in \Gamma^{(3)}$ except near the three critical points $\{k_0, \omega k_0, \omega^2 k_0\}$. Moreover, the jump matrices $v_{j}^{(3)}$, $j=5,6,7,8$, satisfy
\begin{subequations}
\begin{align}\label{v3estimatesa}
& \|(1+|\cdot|) \partial_{x}^{l}(v^{(3)} - I)\|_{(L^1 \cap L^\infty)(\Gamma_5^{(3)})} \leq Ct^{-3/2}, 
	\\ \label{v3estimatesb}
& \| (1+|\cdot|)\partial_{x}^{l}(v^{(3)} - I)\|_{L^1(\Gamma_6^{(3)})} \leq Ct^{-3/2},
	\\ \label{v3estimatesc}
& \| (1+|\cdot|)\partial_{x}^{l}(v^{(3)} - I)\|_{L^\infty(\Gamma_6^{(3)})} \leq Ct^{-1}, 
	\\ \label{v3estimatesd}
& \|(1+|\cdot|) \partial_{x}^{l}(v^{(3)} - I)\|_{(L^1 \cap L^\infty)(\Gamma_7^{(3)} \cup \Gamma_8^{(3)})} \leq Ct^{-3/2},
\end{align}
\end{subequations}
uniformly for $\zeta \in \mathcal{I}$ and $l =0,1$.
\end{lemma}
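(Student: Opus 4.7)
The proof is a contour-by-contour analysis of $v^{(3)}-I$ (and its $x$-derivative) on the eight arcs $\Gamma_j^{(3)}$. Three tools do essentially all the work: Lemma~\ref{deltalemma}(b), which bounds $\delta_1^{\pm 1},\delta_3^{\pm 1},\delta_5^{\pm 1}$ uniformly in $\zeta\in\mathcal{I}$, $k\in\C\setminus\Gamma^{(2)}$, together with the estimate (\ref{der x of delta1}) giving $\partial_x\delta_1^{\pm 1}=O(t^{-1})$; Lemma~\ref{decompositionlemma}, which supplies the pointwise decay (\ref{bounds on analytic part}) for the analytic parts $r_{1,a},r_{2,a},\hat r_{1,a}^*$ and the $(1+|k|)$-weighted $O(t^{-3/2})$ $L^p$ bounds (\ref{norm remainder r2r})--(\ref{norm remainder r1rhat}) for the remainders; and the bound (\ref{bound on f and g}) for $f$ and $g$.

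First I would dispose of the \emph{pointwise} claim. On $\Gamma_1^{(3)},\ldots,\Gamma_4^{(3)}$ every nonidentity entry is a product of a $\delta$-ratio (uniformly bounded), an analytic factor $r_{1,a},\hat r_{1,a}^*,r_{2,a}$ (satisfying the exponential bounds of Lemma~\ref{decompositionlemma}(b)), and an exponential $e^{\pm t\Phi_{ij}}$. On each arc the signature tables (Figures~\ref{rePhi23.pdf}--\ref{rePhi12.pdf}) together with the bounds (\ref{bounds on analytic part}) yield a net decay factor $e^{-ct|\re\Phi_{ij}(\zeta,k)|}$; since $|\re\Phi_{ij}|\geq c>0$ off any fixed neighborhood of $\{k_0,\omega k_0,\omega^2 k_0\}$, the entries go to $0$ exponentially fast outside such a neighborhood. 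On $\Gamma_5^{(3)},\Gamma_7^{(3)},\Gamma_8^{(3)}$ the nonidentity entries contain a remainder $r_{2,r},r_{1,r}$, or $\hat r_{1,r}^*$, which is $O(t^{-3/2})$ in $L^\infty$ by (\ref{norm remainder r2r})--(\ref{norm remainder r1rhat}); on $\Gamma_6^{(3)}$ the entries containing $f,g$ are $O(|k|e^{-ct|\re\Phi_{ij}|})$ by (\ref{bound on f and g}), hence vanish as $t\to\infty$ at each fixed $k$. For $\partial_x v^{(3)}$ the product rule plus (\ref{der x of delta1}) and the $l=1$ versions of (\ref{bounds on analytic part}) and (\ref{norm remainder r2r})--(\ref{norm remainder r1rhat}) give the same conclusions.

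Next I would derive the $L^p$ bounds (\ref{v3estimatesa}), (\ref{v3estimatesd}) for $\Gamma_5^{(3)},\Gamma_7^{(3)},\Gamma_8^{(3)}$. After pulling out the uniformly bounded prefactors and noting that $|e^{\pm t\Phi_{ij}}|\leq 1$ on the relevant arc (by the signature tables, which were precisely the reason the $r_{j,r}$ factorizations were introduced there), one reads off
\[
\|(1+|\cdot|)\partial_x^l(v^{(3)}-I)\|_{L^p(\Gamma_5^{(3)}\cup\Gamma_7^{(3)}\cup\Gamma_8^{(3)})}
\leq C\bigl\|(1+|\cdot|)\partial_x^l r_{j,r}\bigr\|_{L^p}=O(t^{-3/2})
\]
for $1\leq p\leq\infty$ by (\ref{norm remainder r2r})--(\ref{norm remainder r1rhat}); the $\partial_x$-hit on a $\delta$-prefactor costs only $O(t^{-1})$ and is absorbed by the $O(t^{-3/2})$ bound on the remainder.

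The main obstacle is the split estimate on $\Gamma_6^{(3)}$, where the $L^\infty$ rate $t^{-1}$ is strictly worse than the $L^1$ rate $t^{-3/2}$. Here the jump $v_6^{(3)}-I$ has two kinds of nonvanishing entries: remainder entries $r_{2,r}(\omega k)e^{\pm t\Phi_{32}}$, controlled exactly as in the previous step and contributing $O(t^{-3/2})$ in both norms; and entries $f(k)e^{-t\Phi_{31}}$, $g(k)e^{-t\Phi_{21}}$. Using (\ref{bound on f and g}) together with the relation $\re\Phi_{31}\geq c|\re\Phi_{21}|$ on $\Gamma_6^{(3)}$ (from (\ref{Phi relation 21 and 32}) and the signature tables), the $f,g$ entries are bounded by $C|k|e^{-ct|\re\Phi_{21}(\zeta,k)|}$, and since $|\re\Phi_{21}|\geq c|k|$ near $k=0$ on $\Gamma_6^{(3)}$ (the linear vanishing of $\Phi_{21}$ at the origin) this further simplifies to $C|k|e^{-ct|k|}$. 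Balancing $|k|\asymp t^{-1}$ gives the $L^\infty$ bound $O(t^{-1})$, which is the source of (\ref{v3estimatesc}); integrating instead yields $\int_0^\infty |k|e^{-ct|k|}(1+|k|)\,d|k|=O(t^{-2})$, which is absorbed by the $O(t^{-3/2})$ $r_{2,r}$ contribution to produce (\ref{v3estimatesb}). For $\partial_x$ the extra factor of $t^{-1}$ from $\partial_x\delta$ or $\partial_x f,\partial_x g$ (using the $l=1$ case of (\ref{bound on f and g})) only improves the estimate, so the bounds carry over uniformly in $l=0,1$ and $\zeta\in\mathcal I$.
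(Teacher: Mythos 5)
Your argument follows essentially the same route as the paper's: the pointwise decay on $\Gamma_1^{(3)},\dots,\Gamma_4^{(3)}$ via the signature tables and \eqref{bounds on analytic part}, the $O(t^{-3/2})$ remainder bounds \eqref{norm remainder r2r}--\eqref{norm remainder r1rhat} on $\Gamma_5^{(3)},\Gamma_7^{(3)},\Gamma_8^{(3)}$ where the relevant $\re\Phi_{ij}$ vanishes, and on $\Gamma_6^{(3)}$ the bound $C|k|e^{-ctk_0|k|}$ for the $f,g$ entries (using $\re\Phi_{31}=\re\Phi_{21}=\tfrac32 u(2k_0-u)\geq \tfrac43 k_0|k|$ there), which yields exactly the $O(t^{-1})$ in $L^\infty$ versus $O(t^{-2})$ in $L^1$ dichotomy behind \eqref{v3estimatesb}--\eqref{v3estimatesc}. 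One minor imprecision: the $l=1$ case of \eqref{bound on f and g} gives the \emph{same} bound as $l=0$ (no extra factor of $t^{-1}$), and $\partial_x(e^{-t\Phi_{31}})=-(1-\omega)k\,e^{-t\Phi_{31}}$ contributes a factor $|k|$ rather than $t^{-1}$; since all these product-rule terms obey the same pointwise bounds as the undifferentiated entries, the $l=1$ conclusion is nonetheless unaffected.
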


\begin{proof}
Consider first the jump matrix $v_{1}^{(3)}$. Since $\re \Phi_{32} \geq c > 0$ and $\re \Phi_{21} \leq 0$ for $k\in \Gamma_1^{(3)}$,  $v_{1}^{(3)}$ (resp. $\partial_{x} v_{1}^{(3)}$) converges to $I$ (resp. to the zero matrix) as $t \to \infty$ by \eqref{Phi relation 21 and 32}, \eqref{bounds on analytic part}, and \eqref{delta1bound}. Note however that the convergence to $0$ of the (21) entry is not uniform for $k$ near $k_{0}$, because $\re \Phi_{21}(\zeta, k_{0}) = 0$. Analogous statements for $v_{2}^{(3)}, v_{3}^{(3)}$, and $v_{4}^{(3)}$ can be proved in a similar way.

Since $\re \Phi_{32} = 0$ for $k \in \Gamma_5^{(3)}$, (\ref{v3estimatesa}) follows from \eqref{norm remainder r2r}, and \eqref{delta1bound}.

We next show (\ref{v3estimatesb}) and (\ref{v3estimatesc}).
We parametrize $\Gamma_6^{(3)}$ by $ue^{\frac{\pi i}{3}}$, $0 \leq u \leq \frac{2k_0}{1 + \sqrt{3}}$, and note that
$$\re \Phi_{31}(\zeta, ue^{\frac{\pi i}{3}})=
\re \Phi_{21}(\zeta, ue^{\frac{\pi i}{3}}) =\frac{3}{2}u(2k_0-u), \qquad u \in \R.$$
It follows that
$$\begin{cases}
\re \Phi_{31}(\zeta, k) \geq \tfrac{4}{3}k_0 |k|, \\
\re \Phi_{21}(\zeta, k) \geq \tfrac{4}{3}k_0 |k|, 
\end{cases} \quad k \in \Gamma_6^{(3)}.$$
Using \eqref{bound on f and g}, \eqref{delta1bound}, \eqref{der x of delta1}, and the fact that $\partial_{x}(t\Phi_{31})=(1-\omega)k$, we thus find
\begin{align*}
& |(v_6^{(3)} - I)_{13}| \leq C |f(k)| e^{-t\re \Phi_{31}}
\leq C |k| e^{-tk_0 |k|}, & & k \in \Gamma_6^{(3)}, \\
& |\partial_{x}(v_6^{(3)})_{13}| \leq C |k| e^{-tk_0 |k|}, & & k \in \Gamma_6^{(3)}.
\end{align*}
Hence, for $l=0,1$, we have
\begin{align*}
\| (1+|\cdot|) \partial_{x}^{l}(v_6^{(3)} - I)_{13}\|_{L^1(\Gamma_6^{(3)})} \leq \frac{C}{(k_0 t)^2},
\qquad 
 \| (1+|\cdot|) \partial_{x}^{l}(v_6^{(3)} - I)_{13}\|_{L^\infty(\Gamma_6^{(3)})} \leq \frac{C}{k_0 t}, 
\end{align*}
and similar estimates apply to the $(12)$-entry.
On the other hand, $\re \Phi_{32} = 0$ for $k \in \Gamma_6^{(3)}$, and hence we can estimate the $(23)$-entry using \eqref{delta1bound} as follows:
$$|(v_6^{(3)} - I)_{23}| = |\tfrac{\delta_{1}\delta_{3}}{\delta_{5}^{2}} r_{2,r}^{*}(\omega k)|
\leq C |r_{2,r}^{*}(\omega k)|, \qquad k \in \Gamma_6^{(3)}.$$
By \eqref{norm remainder r2r}, this implies that the $L^1$ and $L^\infty$ norms of $(1+|\cdot|)(v^{(3)} - I)_{23}$ on $\Gamma_6^{(3)}$ are $O(t^{-3/2})$ as $t \to \infty$. Using also \eqref{der x of delta1} and \eqref{norm remainder r2r}, we conclude similarly that the $L^1$ and $L^\infty$ norms of $(1+|\cdot|)\partial_{x}v^{(3)}_{23}$ on $\Gamma_6^{(3)}$ are $O(t^{-3/2})$ as $t \to \infty$. A similar estimate applies to the $(32)$-entry and its $x$-derivative. The $(22)$-entry is even smaller. This proves (\ref{v3estimatesb}) and (\ref{v3estimatesc}).

We finally show (\ref{v3estimatesd}).
Note that $\re \Phi_{32} > 0$ and $\re \Phi_{31} > 0$ for $k \in \R_+$. We conclude from \eqref{estimate r2astar at infty} that $|(v_7^{(3)} - I)_{23}|$ and
 $|(v_7^{(3)} - I)_{13}|$ decay to zero as $t \to \infty$ faster than $|(v_7^{(3)} - I)_{12}|$ and $|(v_7^{(3)} - I)_{21}|$. Moreover, since $\re \Phi_{21} =  0$ for $k \in \R_+$, \eqref{norm remainder r1r} and \eqref{delta1bound} imply
$$
|(v_7^{(3)} - I)_{21}|=|\tfrac{\delta_{1}^{2}}{\delta_{3}\delta_{5}} r_{1,r}^*|\leq Ct^{-3/2},
\qquad
|(v_7^{(3)} - I)_{12}|=|\tfrac{\delta_{3}\delta_{5}}{\delta_{1}^{2}} r_{1,r}|\leq Ct^{-3/2},
$$
while $|(v_7^{(3)} - I)_{22}|$ is even smaller. Thus,
\begin{align*}
\|v^{(3)} - I\|_{(L^1 \cap L^\infty)(\Gamma_7^{(3)})} \leq Ct^{-3/2}.
\end{align*}
To estimate $\partial_{x}(v_{7}^{(3)})_{21}$, we use \eqref{norm remainder r1r} and \eqref{der x of delta1}. This gives
\begin{align*}
|\partial_{x}(v_{7}^{(3)})_{21}| \leq \Big| \partial_{x} \Big( \tfrac{\delta_{3}\delta_{5}}{\delta_{1}^{2}}\Big) r_{1,r}\Big| + \Big| \tfrac{\delta_{3}\delta_{5}}{\delta_{1}^{2}} \partial_{x}r_{1,r} \Big| \leq Ct^{-3/2}.
\end{align*}
The entries $\partial_{x}(v_{7}^{(3)})_{12}$ and $\partial_{x}(v_{7}^{(3)})_{22}$ are estimated in a similar way.

The matrix $v_{8}^{(3)}$ can be estimated in the same way as $v_{7}^{(3)}$, except that now we need to use \eqref{norm remainder r1rhat} and to note that $\re \Phi_{21}=0$ for $k \in (k_0, \infty)$. This proves (\ref{v3estimatesd}).
\end{proof}

\section{Local parametrix at $k_0$}\label{localsec}
In Section \ref{section4.5}, we arrived at a RH problem for $m^{(3)}$ with the property that the matrix $v^{(3)} - I$ decays to zero as $t \to \infty$ everywhere except near the three critical points $\{k_0, \omega k_0, \omega^2 k_0\}$. This means that we only have to consider neighborhoods of these three points when computing the long-time asymptotics of $m^{(3)}$. In this section, we find a local solution $m^{k_0}$ which approximates $m^{(3)}$ near $k_0$. The basic idea is that in the large $t$ limit, the RH problem for $m^{(3)}$ near $k_0$ reduces to an RH problem on a cross which can be solved exactly in terms of parabolic cylinder functions \cite{I1981, DZ1993}.

Let $\epsilon \equiv \epsilon(\zeta) = k_0/2$.
Let $D_\epsilon(k_0)$ denote an open disk of radius $\epsilon$ centered at $k_0$.
Let $\mathcal{D} = D_\epsilon(k_0) \cup \omega D_\epsilon(k_0) \cup \omega^2 D_\epsilon(k_0)$.
Let $\mathcal{X} = k_0 + X$, where $X$ is the contour defined in (\ref{Xdef}). We will also use the notations $\mathcal{X}^\epsilon = \mathcal{X} \cap D_\epsilon(k_0)$ and $\mathcal{X}_{j}^\epsilon = (k_{0}+X_{j}) \cap D_\epsilon(k_0)$, $j=1,...,4$, where $X_{j}$ is defined in (\ref{Xdef}).

In order to relate $m^{(3)}$ to the solution $m^X$ of Lemma \ref{Xlemma}, we make a local change of variables for $k$ near $k_0$ and introduce the new variable $z \equiv z(\zeta, k)$ by
\begin{align}\label{def of conformal map}
z = 3^{1/4}\sqrt{2t}(k-k_0).
\end{align}
For each $\zeta \in \mathcal{I}$, the map $k \mapsto z$ is a biholomorphism from $D_\epsilon(k_0)$ onto the open disk of radius $3^{1/4}\sqrt{2t} \epsilon$ centered at the origin.
Using that
$$\Phi_{21}(\zeta, k) = \Phi_{21}(\zeta, k_0) + i\sqrt{3}(k - k_0)^2,$$
where $\Phi_{21}(\zeta, k_0) = -i\sqrt{3}k_0^2$, we see that
\begin{align*}
t(\Phi_{21}(\zeta, k) - \Phi_{21}(\zeta, k_0)) = \frac{i}{2}z^2.
\end{align*}
Equations \eqref{delta1 expression in terms of log and chi1} and \eqref{def of conformal map} imply that, for $\zeta \in \mathcal{I}$ and $k \in D_{\epsilon}(k_{0}) \setminus [k_0,\infty)$, 
\begin{align*}
\frac{\delta_{3}\delta_{5}}{\delta_{1}^{2}} = e^{2i\nu \ln_{0}(z)}(2\sqrt{3}t)^{-i\nu}e^{2\chi_{1}(\zeta,k)}\delta_{3}\delta_{5} = e^{2i\nu \ln_{0}(z)}d_0(\zeta, t) d_1(\zeta,k),
\end{align*}
where the functions $d_0(\zeta, t)$ and $d_1(\zeta, k)$ are defined for $\zeta \in \mathcal{I}$ and $k \in D_{\epsilon}(k_{0}) \setminus [k_0,\infty)$ by
\begin{align}
d_0(\zeta, t) = &\; (2\sqrt{3}t)^{-i\nu} e^{2\chi_{1}(\zeta,k_{0})} \delta_3(\zeta,k_{0})\delta_5(\zeta,k_{0}), \label{d0def} \\ 
d_1(\zeta, k) = &\; e^{2\chi_{1}(\zeta,k)-2\chi_{1}(\zeta,k_{0})}\frac{\delta_3(\zeta,k)\delta_5(\zeta,k)}{\delta_3(\zeta,k_{0})\delta_5(\zeta,k_{0})}. \label{d1def}
\end{align}

Defining $\tilde{m}$ for $k$ near $k_0$ by
$$\tilde{m}(x,t,k) = m^{(3)}(x,t,k)Y(\zeta,t), \qquad k \in D_\epsilon(k_0),
$$
where
$$Y(\zeta,t) = \begin{pmatrix} 
d_0^{1/2}(\zeta, t)e^{-\frac{t}{2}\Phi_{21}(\zeta, k_0)} & 0 & 0 \\
0 & d_0^{-1/2}(\zeta, t)e^{\frac{t}{2}\Phi_{21}(\zeta, k_0)} & 0 \\
0 & 0 & 1 \end{pmatrix},$$
we find that the jump $\tilde{v}(x,t,k)$ of $\tilde{m}$ across $\mathcal{X}^\epsilon$ is given by 
\begin{align*}
& \tilde{v}_1 =
 \begin{pmatrix}
  1 & 0 & 0 \\
  e^{-2i\nu\ln_0(z)}  d_1^{-1} \hat{r}_{1,a}^{*}(k) e^{\frac{iz^2}{2}} & 1 & -\frac{\delta_{1}\delta_{3}}{\delta_{5}^{2}} d_0^{1/2} r_{2,a}^{*}(\omega k) e^{-t\Phi_{32}}e^{-\frac{t}{2}\Phi_{21}(\zeta, k_0)}
 	\\
0 & 0& 1 \end{pmatrix}, 
  	\\
& \tilde{v}_2 = \begin{pmatrix}
  1 & e^{2i\nu\ln_0(z)}  d_1 r_{1,a}(k) e^{-\frac{iz^2}{2}} & -\frac{\delta_{3}^{2}}{\delta_{1}\delta_{5}} d_0^{-1/2} \Omega_1(k) e^{-t\Phi_{31}}e^{\frac{t}{2}\Phi_{21}(\zeta, k_0)} \\
 0  & 1 & 0 \\
0 & 0 & 1 \end{pmatrix}, 
	\\
& \tilde{v}_3 = \begin{pmatrix}
  1 & 0 & 0 \\
 -e^{-2i\nu\ln_0(z)}  d_1^{-1} r_{1,a}^*(k) e^{\frac{iz^2}{2}} & 1 & \frac{\delta_{1}\delta_{3}}{\delta_{5}^{2}}d_0^{1/2} \Omega_2(k) e^{-t\Phi_{32}}e^{-\frac{t}{2}\Phi_{21}(\zeta, k_0)} \\
0 & 0 & 1  \end{pmatrix}, 
	\\
& \tilde{v}_4 = \begin{pmatrix}
  1 & -e^{2i\nu\ln_0(z)}  d_1 \hat{r}_{1,a}(k) e^{-\frac{iz^2}{2}} & \frac{\delta_{3}^{2}}{\delta_{1}\delta_{5}}d_0^{-1/2} r_{2,a}(\omega^2 k) e^{-t\Phi_{31}}e^{\frac{t}{2}\Phi_{21}(\zeta, k_0)} \\
0 & 1 & 0 \\
0 & 0 & 1 \end{pmatrix}, 
\end{align*}
where $\tilde{v}_j$ denotes the restriction of $\tilde{v}$ to $\mathcal{X}_j^\epsilon$, $j = 1,2,3,4$, and $\Omega_1(k) \equiv \Omega_1(x,t,k)$ and $\Omega_2(k) \equiv \Omega_2(x,t,k)$ are given by
\begin{align*}
& \Omega_1(k) =r_{2,a}(\omega^2 k) + r_{1,a}(k)r_{2,a} ^{*}(\omega k), 
\qquad \Omega_2(k) = r_{2,a}^{*}(\omega k) + r_{1,a}^*(k) r_{2,a}(\omega^2 k).
\end{align*}

Define $q \equiv q(\zeta)$ by
$$q = r_1(k_0).$$
For a fixed $z$, $r_{1,a}(k) \to q$,  $\hat{r}_{1,a}^{*}(k) \to \frac{\bar{q}}{1 - |q|^2}$, and $d_1(\zeta, k) \to 1$ as $t\to \infty$. This suggests that $\tilde{v}(x,t,k)$ tends to the jump matrix $v^{X}(x,t,z)$ defined in (\ref{vXdef}) for large $t$.
In other words, that the jumps of $m^{(3)}$ for $k$ near $k_0$ approach those of the function $m^X Y^{-1}$  as $t \to \infty$.
This suggests that we approximate $m^{(3)}$ in the neighborhood $D_\epsilon(k_0)$ of $k_0$ by the $3 \times 3$-matrix valued function $m^{k_0}$ defined by
\begin{align}\label{mk0def}
m^{k_0}(x,t,k) = Y(\zeta,t) m^X(q(\zeta),z(\zeta, k)) Y(\zeta,t)^{-1}, \qquad k \in D_{\epsilon}(k_{0}).
\end{align}
The prefactor $Y(\zeta,t)$ on the right-hand side of (\ref{mk0def}) is included so that $m^{k_0} \to I$ on $\partial D_\epsilon(k_0)$ as $t \to \infty$; this ensures that $m^{k_0}$ is a good approximation of $m^{(3)}$ in $D_\epsilon(k_0)$ for large $t$.


\begin{lemma}\label{lemma: bound on Y}
The function $Y(\zeta,t)$ is uniformly bounded:
\begin{align}\label{Ybound}
\sup_{\zeta \in \mathcal{I}} \sup_{t \geq 2} |\partial_{x}^{l}Y(\zeta,t)^{\pm 1}| < C, \qquad l = 0,1.
\end{align}
Moreover, the functions $d_0(\zeta, t)$ and $d_1(\zeta, k)$ satisfy
\begin{subequations}
\begin{align}\label{d0estimate}
& |d_0(\zeta, t)| = e^{2\pi \nu}, \qquad \zeta \in \mathcal{I}, \ t \geq 2, \\
& |\partial_{x}d_0(\zeta, t)| \leq C\frac{\ln t}{t}, \qquad \zeta \in \mathcal{I}, \ t \geq 2, \label{d0estimate derivative}
\end{align}
\end{subequations}
and
\begin{subequations}
\begin{align}\label{d1estimate}
& |d_1(\zeta, k) - 1| \leq C |k - k_0| (1+ |\ln|k-k_0||), & & \zeta \in \mathcal{I}, \ k \in \mathcal{X}^{\epsilon}, \\
& |\partial_{x} d_{1}(\zeta, k)| \leq \frac{C}{t} | \ln |k-k_{0}||,  & & \zeta \in \mathcal{I}, \ k \in \mathcal{X}^{\epsilon}. \label{d1estimate derivative}
\end{align}
\end{subequations}
\end{lemma}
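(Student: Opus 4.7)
The plan is to unpack the definitions \eqref{d0def}--\eqref{d1def} and reduce every estimate to the bounds on $\chi_1$ and $\delta_1$ already established in Lemma \ref{deltalemma}, together with the compactness of $\mathcal{I}$ in $(\zeta_0,\infty)$ (which keeps $k_0$ bounded and bounded away from $0$, and, via \eqref{rk0leq1minusepsilon}, keeps $\nu$ bounded). The natural starting point is the modulus identity $|d_0|=e^{2\pi\nu}$, since the boundedness of $Y^{\pm 1}$ then follows immediately from $|d_0^{\pm 1/2}|=e^{\pm\pi\nu}$ combined with the observation that $\Phi_{21}(\zeta,k_0)=-i\sqrt{3}k_0^2$ is purely imaginary, forcing $|e^{\mp t\Phi_{21}(\zeta,k_0)/2}|=1$. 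To establish $|d_0|=e^{2\pi\nu}$ I will first note that $|(2\sqrt{3}t)^{-i\nu}|=1$, then substitute $\ln_0(k_0-s)=\ln(s-k_0)+i\pi$ (valid because $k_0-s<0$ for $s\in(k_0,\infty)$ and $\arg_0\in(0,2\pi)$) into \eqref{def of chi1} and split the integral: the $\ln(s-k_0)$-piece becomes purely imaginary after division by $2\pi i$, while the $i\pi$-piece integrates exactly to $\tfrac{1}{2}[\ln(1-|r_1(s)|^2)]_{k_0}^{\infty}=\pi\nu$, giving $\re\chi_1(\zeta,k_0)=\pi\nu$. Finally, since $\overline{\omega k_0}=\omega^2 k_0$, the symmetry \eqref{symmetry of delta1} yields $\delta_3(\zeta,k_0)\delta_5(\zeta,k_0)=\delta_1(\zeta,\omega k_0)/\overline{\delta_1(\zeta,\omega k_0)}$, which has modulus $1$.

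For \eqref{d0estimate derivative} I will differentiate $\ln d_0$ term by term. The term $-i\ln(2\sqrt{3}t)\,\partial_x\nu$ carries the $\ln t$: since $\nu$ depends on $x$ only through $k_0=x/(2t)$ and \eqref{rk0leq1minusepsilon} keeps $|r_1(k_0)|$ uniformly away from $1$, one has $\partial_x\nu=O(1/t)$, producing an $O(\ln t/t)$ contribution. The next term is $2\partial_x\chi_1(\zeta,k_0)=O(1/t)$ by \eqref{estimate der of chi}. For $\partial_x\ln[\delta_3(\zeta,k_0)\delta_5(\zeta,k_0)]$, differentiating the integral representation \eqref{delta1def} directly at $k=\omega^{\pm 1}k_0$ gives a contribution from the moving endpoint $s=k_0$ of size $O(\nu/k_0)$ and a contribution from the $\zeta$-dependence of $\omega^{\pm 1}k_0$ in the denominator of size $O(1)$; both are bounded uniformly on $\mathcal{I}$, so the whole term is $O(1/t)$. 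Multiplying $\partial_x\ln d_0$ by $|d_0|=e^{2\pi\nu}=O(1)$ gives \eqref{d0estimate derivative}. The derivative bound in \eqref{Ybound} then follows: differentiating $d_0^{\pm 1/2}$ contributes $O(\ln t/t)$, while differentiating the phase $\mp t\Phi_{21}(\zeta,k_0)/2=\pm i\sqrt{3}x^2/(8t)$ produces a factor $\pm i\sqrt{3}k_0/2=O(1)$.

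For $d_1$ the plan is parallel. I will prove \eqref{d1estimate} via the factorization $d_1-1=(e^{2(\chi_1(\zeta,k)-\chi_1(\zeta,k_0))}-1)\,\tfrac{\delta_3(\zeta,k)\delta_5(\zeta,k)}{\delta_3(\zeta,k_0)\delta_5(\zeta,k_0)}+\big(\tfrac{\delta_3(\zeta,k)\delta_5(\zeta,k)}{\delta_3(\zeta,k_0)\delta_5(\zeta,k_0)}-1\big)$. The first summand is controlled by $C|k-k_0|(1+|\ln|k-k_0||)$ using \eqref{asymp chi1 at k0} and $|e^z-1|\leq|z|e^{|z|}$; the second is $O(|k-k_0|)$ because $k\mapsto\delta_3(\zeta,k)\delta_5(\zeta,k)$ is analytic and Lipschitz in a neighborhood of $k_0$ uniformly in $\zeta\in\mathcal{I}$ (the singular set $[k_0,\infty)$ of $\delta_1$ is disjoint from the images $\omega^{\pm 1}\mathcal{X}^\epsilon$). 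For \eqref{d1estimate derivative}, differentiate $\ln d_1$: the $\chi_1$-term is bounded by $(C/t)(1+|\ln|k-k_0||)$ via \eqref{asymp chi1 der at k0}, while the $\delta_j$-differences are handled by applying \eqref{der x of delta1} at fixed $k$ and computing $\partial_x\ln\delta_j(\zeta,k_0)$ via the integral formula \eqref{delta1def}; since $\omega^{\pm 1}k-k_0$ is bounded below uniformly on $\mathcal{X}^\epsilon$ for $\zeta\in\mathcal{I}$, each such difference is $O(1/t)$, and the stated bound follows (with $1+|\ln|k-k_0||$ absorbed into $|\ln|k-k_0||$ on the relevant small disk at the cost of a larger constant).

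I expect the main technical point to be the identification $\re\chi_1(\zeta,k_0)=\pi\nu$: one must track the branch of $\ln_0$ carefully (selecting $+i\pi$ rather than $-i\pi$ on the negative real axis) and pair it with the correct conjugation symmetry of $\delta_1$ to see that the product of the boundary delta-factors contributes nothing to $|d_0|$. Once this exact identity is in hand, all remaining bounds are routine quantitative manipulations of the formulas in Lemma \ref{deltalemma}; the only other non-trivial feature is the appearance of $\ln t$ in \eqref{d0estimate derivative}, which arises solely from differentiating the $t$-dependent prefactor $(2\sqrt{3}t)^{-i\nu}$ with respect to its $x$-dependent exponent $\nu(\zeta)$.
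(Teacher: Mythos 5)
Your proposal is correct and follows essentially the same route as the paper: the modulus identity $|d_0|=e^{2\pi\nu}$ is obtained exactly as in the paper's proof (unimodularity of $\delta_3(\zeta,k_0)\delta_5(\zeta,k_0)$ via the conjugation symmetry \eqref{symmetry of delta1}, plus $\re\chi_1(\zeta,k_0)=\pi\nu$ from the branch $\arg_0(k_0-s)=\pi$), and the remaining bounds are derived, as in the paper, by logarithmic differentiation of $d_0$ and $d_1$ combined with \eqref{estimate der of chi}, \eqref{asymp chi1 at k0}, \eqref{asymp chi1 der at k0}, and $\partial_x=\tfrac{1}{t}\partial_\zeta$. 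The extra detail you supply (the explicit two-term splitting of $d_1-1$ and the endpoint computation for $\partial_x\ln[\delta_3\delta_5]$) only makes explicit what the paper leaves implicit.
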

\begin{proof}
The symmetry \eqref{symmetry of delta1} implies 
\begin{align*}
|\delta_3(\zeta,k_{0})\delta_5(\zeta,k_{0})| = |\delta_1(\zeta,\omega^{2}k_{0})\delta_1(\zeta,\omega k_{0})| = 1,
\end{align*}
and hence \eqref{d0estimate} follows because
\begin{align*}
\re \chi_{1}(\zeta,k_{0}) = \frac{1}{2\pi} \int_{k_{0}}^{\infty} \pi d\ln\big( 1-|r_1(s)|^{2} \big) = -\frac{1}{2}\ln\big( 1-|r_1(k_{0})|^{2} \big) = \pi \nu.
\end{align*}
Using \eqref{d0estimate}, we obtain
\begin{align*}
|\partial_{x} d_0(\zeta, t)| & = |d_0(\zeta, t) \partial_{x}\ln d_0(\zeta, t)|= e^{2\pi \nu} |\partial_{x}\ln d_0(\zeta, t)|  \\
& \leq C \big( |\ln t \; \partial_{x}\nu| + |\partial_{x}\chi_{1}(\zeta,k_{0})| + |\partial_{x} \ln(\delta_3(\zeta,k_{0})\delta_5(\zeta,k_{0}))| \big)
\end{align*}
and thus \eqref{d0estimate derivative} follows from \eqref{estimate der of chi} and the fact that $\partial_{x} = \frac{1}{t}\partial_{\zeta}$. Observing that $\delta_{3}$ and $\delta_{5}$ are analytic for $k \in \mathcal{X}^{\epsilon}$, \eqref{d1estimate} follows from \eqref{asymp chi1 at k0}. Finally, we have
\begin{align*}
\partial_{x}d_{1}(\zeta,k) = d_{1}(\zeta,k) \partial_{x} \log d_{1}(\zeta,k).
\end{align*}
Since $\partial_{\zeta} \log \frac{\delta_3(\zeta,k)\delta_5(\zeta,k)}{\delta_3(\zeta,k_{0})\delta_5(\zeta,k_{0})}$ is analytic and $|d_{1}(\zeta,k)| \leq C$ for $k \in \mathcal{X}^{\epsilon}$, it follows that
\begin{align*}
|\partial_{x}d_{1}(\zeta,k)| \leq C \bigg( |\partial_{x}(\chi_{1}(\zeta,k)-\chi_{1}(\zeta,k_{0}))| + \frac{1}{t}\bigg|\partial_{\zeta} \log \frac{\delta_3(\zeta,k)\delta_5(\zeta,k)}{\delta_3(\zeta,k_{0})\delta_5(\zeta,k_{0})}\bigg| \bigg),
\end{align*}
and so \eqref{d1estimate derivative} follows from \eqref{asymp chi1 der at k0}.
\end{proof}

\begin{lemma}\label{k0lemma}
 For each $(x,t)$, the function $m^{k_0}(x,t,k)$ defined in (\ref{mk0def}) is an analytic and bounded function of $k \in D_\epsilon(k_0) \setminus \mathcal{X}^\epsilon$.
Across $\mathcal{X}^\epsilon$, $m^{k_0}$ obeys the jump condition $m_+^{k_0} =  m_-^{k_0} v^{k_0}$, where the jump matrix $v^{k_0}$ satisfies
\begin{align}\label{v3vk0estimate}
\begin{cases}
 \|\partial_{x}^{l}(v^{(3)} - v^{k_0})\|_{L^1(\mathcal{X}^\epsilon)} \leq C t^{-1} \ln t,
	\\
\|\partial_{x}^{l}(v^{(3)} - v^{k_0})\|_{L^\infty(\mathcal{X}^\epsilon)} \leq C t^{-1/2} \ln t,
\end{cases} \qquad \zeta \in \mathcal{I}, \ t \geq 2, \ l=0,1.
\end{align}
Furthermore, as $t \to \infty$,
\begin{align}\label{mmodmuestimate2}
& \|\partial_{x}^{l}(m^{k_0}(x,t,\cdot)^{-1} - I)\|_{L^\infty(\partial D_\epsilon(k_0))} = O(t^{-1/2}), \qquad l=0,1,
	\\ \label{mmodmuestimate1}
& \frac{1}{2\pi i}\int_{\partial D_\epsilon(k_0)}(m^{k_0}(x,t,k)^{-1} - I) dk
= -\frac{Y(\zeta,t) m_1^{X}(q(\zeta)) Y(\zeta,t)^{-1}}{3^{1/4}\sqrt{2}\sqrt{t}} + O(t^{-1}),
\end{align}
uniformly for $\zeta \in \mathcal{I}$, and \eqref{mmodmuestimate1} can be differentiated with respect to $x$ without increasing the error term.
\end{lemma}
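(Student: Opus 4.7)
The plan is to split the lemma into three parts: the analyticity/jump structure, the jump comparison \eqref{v3vk0estimate}, and the boundary estimates \eqref{mmodmuestimate2}--\eqref{mmodmuestimate1}. The first part is immediate from the definition: in \eqref{mk0def} the factor $Y(\zeta,t)$ is independent of $k$, the map $k\mapsto z$ is a biholomorphism of $D_\epsilon(k_0)$, and $m^X(q,\cdot)$ is analytic and bounded off $X$; boundedness of $m^{k_0}$ on $D_\epsilon(k_0)\setminus \mathcal{X}^\epsilon$ then follows from \eqref{Ybound}. A direct computation from \eqref{mk0def} gives the jump matrix as $v^{k_0} = Y v^X Y^{-1}$. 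On the other hand, the substitution $\tilde m = m^{(3)}Y$ used to construct $m^{k_0}$ satisfies $\tilde v = Y^{-1} v^{(3)} Y$ on $\mathcal{X}^\epsilon$, so
\begin{align*}
v^{(3)} - v^{k_0} = Y\bigl(\tilde v - v^X\bigr) Y^{-1}.
\end{align*}
By \eqref{Ybound}, the proof of \eqref{v3vk0estimate} therefore reduces to estimating $\tilde v - v^X$ and its $x$-derivative on $\mathcal{X}^\epsilon$.

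For this reduced problem I would compare $\tilde v$ and $v^X$ entry-by-entry on each subray $\mathcal{X}_j^\epsilon$. The prefactor $e^{\pm 2i\nu\ln_0 z}$ is common to both matrices and cancels, and the remaining discrepancies are of two types. First, $\tilde v$ contains the combinations $d_1(\zeta,k)r_{1,a}(k)$ and $d_1(\zeta,k)^{-1}\hat r_{1,a}^*(k)$ while $v^X$ contains the constants $q = r_1(k_0)$ and $\bar q/(1-|q|^2)$; by \eqref{estimate r1a at 0}--\eqref{estimate r1ahatstar at k0} and \eqref{d1estimate}, the pointwise difference is $O(|k-k_0|(1+|\ln|k-k_0||))$. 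Second, $\tilde v$ carries additional entries multiplied by $e^{-t\Phi_{31}}$ or $e^{-t\Phi_{32}}$ for which the corresponding entries of $v^X$ vanish; these are pointwise $O(e^{-ct})$ since $\re\Phi_{31}(\zeta,k_0)$ and $\re\Phi_{32}(\zeta,k_0)$ are strictly positive and bounded below uniformly on $\mathcal{I}$. Using the Gaussian suppression $|e^{\pm iz^2/2}|\leq e^{-ct|k-k_0|^2}$ on the relevant subrays (which is the defining property of the cross $\mathcal{X}$) and making the substitution $u = \sqrt{t}(k-k_0)$ converts the first pointwise estimate into the $L^\infty$ bound $O(t^{-1/2}\ln t)$ and the $L^1$ bound $O(t^{-1}\ln t)$ claimed in \eqref{v3vk0estimate} for $l=0$. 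The $l=1$ case follows from the analogous differentiated bounds, using \eqref{estimate r1a at 0}--\eqref{estimate r1ahatstar at k0} together with \eqref{d1estimate derivative}; when $\partial_x$ acts on an exponential factor it produces either the bounded quantity $\partial_\zeta\Phi_{21}(\zeta,k_0)$ or a factor controlled by \eqref{Ybound}, so the final orders are unchanged.

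For \eqref{mmodmuestimate2}--\eqref{mmodmuestimate1} the key input is the large-$z$ expansion
\begin{align*}
m^X(q,z) = I + \frac{m_1^X(q)}{z} + O(z^{-2}), \qquad z \to \infty,
\end{align*}
available from the parabolic cylinder construction of the model solution. On $\partial D_\epsilon(k_0)$ we have $|z| = 3^{1/4}\sqrt{2t}\,\epsilon \geq c\sqrt{t}$, so this gives $m^X(q,z)^{-1} - I = -m_1^X(q)/z + O(t^{-1})$ uniformly in $\zeta \in \mathcal{I}$; conjugating by $Y$ and using \eqref{Ybound} yields \eqref{mmodmuestimate2}. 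Substituting this expansion into the integrand of \eqref{mmodmuestimate1} and applying the residue theorem to the $1/(k-k_0)$ term (with the $O(z^{-2})$ tail contributing $O(t^{-1})$ against the $O(1)$ length of the circle) gives \eqref{mmodmuestimate1}. The $x$-derivative version is obtained by differentiating under the integral sign and invoking the boundedness of $\partial_x Y^{\pm 1}$ from \eqref{Ybound} together with the chain-rule estimate $\partial_x m_1^X(q(\zeta)) = O(t^{-1})$ (coming from $\partial_x q = r_1'(k_0)/(2t)$). The main technical obstacle throughout is the logarithmic factor appearing in \eqref{d1estimate}--\eqref{d1estimate derivative}: it is precisely this factor that forces the $\ln t$ in \eqref{v3vk0estimate}, and keeping it under control in both the $L^\infty$ and $L^1$ norms requires the Gaussian change of variables $u = \sqrt{t}(k-k_0)$ rather than a crude supremum bound.
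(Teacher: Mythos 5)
Your proposal follows essentially the same route as the paper: the same reduction $v^{(3)}-v^{k_0}=Y(\tilde v - v^X)Y^{-1}$, the same entry-by-entry split into exponentially small extra entries (carrying $e^{-t\Phi_{31}}$, $e^{-t\Phi_{32}}$) versus the $O(|k-k_0|(1+|\ln|k-k_0||))$ comparison of $d_1^{\mp 1}r_{1,a}$, $d_1^{\mp 1}\hat r_{1,a}^{*}$ with their values at $k_0$ under the Gaussian factor, and the same use of the large-$z$ expansion of $m^X$ together with Cauchy's formula on $\partial D_\epsilon(k_0)$. The one imprecision is in the $l=1$ case: $\partial_x\bigl(z^{2i\nu}\bigr)$ is not bounded but behaves like $C|z^{2i\nu}|/(t|k-k_0|)$ near $k_0$, so one must use the vanishing of the accompanying factor $(d_1^{\mp 1}-1)\hat r_{1,a}^{*}(k)+(\hat r_{1,a}^{*}(k)-\hat r_{1,a}^{*}(k_0))$ to absorb this pole (as the paper does for its term $b_1$) --- a correction that does not change your final orders.
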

\begin{proof}
We have
$$v^{(3)} - v^{k_0} = Y(\zeta, t) (\tilde{v} - v^X)Y(\zeta,t)^{-1}.
$$
Thus, recalling (\ref{Ybound}), the bounds (\ref{v3vk0estimate}) follow if we can show that
\begin{subequations}\label{v3k0}
\begin{align}\label{v3k0L1}
  &\|\partial_{x}^{l}[\tilde{v}(x,t,\cdot) - v^X(x,t,z(\zeta,\cdot))]\|_{L^1(\mathcal{X}_j^\epsilon)} \leq C t^{-1} \ln t,
	\\ \label{v3k0Linfty}
&  \|\partial_{x}^{l}[\tilde{v}(x,t,\cdot) - v^X(x,t,z(\zeta,\cdot))]\|_{L^\infty(\mathcal{X}_j^\epsilon)} \leq C t^{-1/2} \ln t,
\end{align}
\end{subequations}
for $j = 1, \dots, 4$ and $l=0,1$. We give the proof of (\ref{v3k0}) for $j = 1$; similar arguments apply when $j = 2,3,4$.

For $k \in \mathcal{X}_1^\epsilon$, only the $(21)$ and $(23)$ elements of the matrix $\tilde{v} - v^X$ are nonzero.
Using \eqref{estimate r2astar at infty}, \eqref{delta1bound}, \eqref{d0estimate}, and the facts that $\Phi_{21}(\zeta, \omega k) = \Phi_{32}(\zeta, k)$ and $v_{23}^{X}(q(\zeta),z(\zeta,k)) = 0$ for $k \in \mathcal{X}_{1}^{\epsilon}$, $|(\tilde{v} - v^X)_{23}|$ can be estimated as follows:
\begin{align*}
|(\tilde{v} - v^X)_{23}| = &\; |\tfrac{\delta_{1}\delta_{3}}{\delta_{5}^{2}} d_0^{1/2} r_{2,a}^{*}(\omega k) e^{-t\Phi_{32}}e^{-\frac{t}{2}\Phi_{21}(\zeta, k_0)}|
\leq |r_{2,a}^{*}(\omega k)| e^{-t\re \Phi_{32}}\\
\leq &\;  C e^{\frac{t}{4}|\re \Phi_{21}(\zeta, \omega k)|}e^{-t\re \Phi_{32}}
= C e^{-\frac{3t}{4}|\re \Phi_{32}(\zeta, k)|}, \qquad k \in \mathcal{X}_1^{\epsilon}.
\end{align*}
For $k = k_0 + ue^{\frac{\pi i}{4}}$ and $u \geq 0$, we have
\begin{align}\label{rePhi12}
\re \Phi_{32}(\zeta, k_0 + ue^{\frac{\pi i}{4}}) = \frac{1}{2} \left(9 k_{0}^2+6 \sqrt{2} k_{0} u+\sqrt{3} u^2\right) \geq c(k_0 + u)^2.
\end{align}
Hence
$$\|(\tilde{v} - v^X)_{23}\|_{L^1(\mathcal{X}_1^\epsilon)}
\leq C \int_0^{\frac{k_{0}}{2}} e^{-ct(k_0 + u)^2} du
= C \int_{k_0}^{\frac{3k_{0}}{2}} e^{-ctv^2} dv \leq C e^{-ctk_{0}^{2}}$$
and
$$\|(\tilde{v} - v^X)_{23}\|_{L^{\infty}(\mathcal{X}_1^\epsilon)}
\leq C \sup_{u \geq 0} e^{-ct(k_0 + u)^2} \leq C e^{-ctk_{0}^{2}}.
$$

To estimate $\partial_{x}(\tilde{v} - v^X)_{23}$, we first note that
\begin{align*}
& \partial_{x}(\tilde{v} - v^X)_{23} = a_{1}+a_{2}+a_{3}+a_{4}+a_{5},
\end{align*}
where
\begin{align*}
& a_{1} = -\partial_{x}\big(\tfrac{\delta_{1}\delta_{3}}{\delta_{5}^{2}}\big) d_0^{1/2} r_{2,a}^{*}(\omega k) e^{-t\Phi_{32}}e^{-\frac{t}{2}\Phi_{21}(\zeta, k_0)}, \\
& a_{2} = -\tfrac{\delta_{1}\delta_{3}}{\delta_{5}^{2}}\partial_{x}\big( d_0^{1/2}\big) r_{2,a}^{*}(\omega k) e^{-t\Phi_{32}}e^{-\frac{t}{2}\Phi_{21}(\zeta, k_0)}, \\
& a_{3} = - \tfrac{\delta_{1}\delta_{3}}{\delta_{5}^{2}} d_0^{1/2} \partial_{x}\big(r_{2,a}^{*}(\omega k)\big) e^{-t\Phi_{32}}e^{-\frac{t}{2}\Phi_{21}(\zeta, k_0)}, \\
& a_{4} = - \tfrac{\delta_{1}\delta_{3}}{\delta_{5}^{2}} d_0^{1/2} r_{2,a}^{*}(\omega k) \partial_{x}\big(e^{-t\Phi_{32}}\big) e^{-\frac{t}{2}\Phi_{21}(\zeta, k_0)}, \\
& a_{5} = - \tfrac{\delta_{1}\delta_{3}}{\delta_{5}^{2}} d_0^{1/2} r_{2,a}^{*}(\omega k) e^{-t\Phi_{32}} \partial_{x}\big(e^{-\frac{t}{2}\Phi_{21}(\zeta, k_0)}\big).
\end{align*}
We claim that $\|a_{j}\|_{(L^{1}\cap L^{\infty})(\mathcal{X}_1^\epsilon)} \leq Ce^{-ctk_{0}^{2}}$ for $j=1,...,5$. These bounds follow from arguments which are similar to those given for $(\tilde{v} - v^X)_{23}$, but more estimates are required. For $a_{1}$, we note that $\partial_{x}\big(\tfrac{\delta_{1}\delta_{3}}{\delta_{5}^{2}}\big)$ has a pole at $k=k_{0}$ (see \eqref{der x of delta1}) which is cancelled by the zero of $r_{2,a}^{*}(\omega k)$ (see \eqref{estimate r2astar at infty}). For $a_{2}$ and $a_{3}$, we use \eqref{d0estimate derivative} and \eqref{estimate r2astar at 0}, respectively. For $a_{4}$, we note that $\partial_{x}(t\Phi_{32}) = \partial_{\zeta}(\Phi_{32}) = (1-\omega)k$, and for $a_{5}$, we observe that $\partial_{x}(t \Phi_{21}(\zeta, k_{0})) = \frac{1}{2}\partial_{k_{0}}\Phi_{21}(\zeta, k_{0}) = \omega(\omega-1)k_{0}$.
Therefore, we arrive at
\begin{align*}
\|\partial_{x}(\tilde{v} - v^X)_{23}\|_{(L^{1}\cap L^{\infty})(\mathcal{X}_1^\epsilon)} \leq Ce^{-ctk_{0}^{2}}. 
\end{align*}

We next consider the $(21)$-entry of $\tilde{v} - v^X$.
Since $q=r_1(k_0)$, from \eqref{estimate r1ahatstar at k0} it follows $\hat{r}_{1,a}^{*}(k_0) = \hat{r}_1^{*}(k_0) =\frac{\bar{q}}{1-|q|^2}$.
Furthermore, 
$$
e^{\frac{t}{4}|\re \Phi_{21}(\zeta, k)|}=e^{\frac{t}{4}|\re (\Phi_{21}(\zeta, k)-\Phi_{21}(\zeta, k_0))|}=e^{\frac{1}{4}|\re (\frac{iz^2}{2})|} \leq e^{\frac{|z|^2}{8}}.
$$
Thus $|(\tilde{v} - v^X)_{21}|$ can be estimated as follows:
\begin{align*}
|(\tilde{v} - v^X)_{21}| = &\; \bigg|e^{-2i\nu\ln_0(z)}  d_1^{-1} \hat{r}_{1,a}^{*}(k) e^{\frac{iz^2}{2}}
- \hat{r}_{1,a}^{*}(k_{0}) e^{-2i\nu\ln_0(z)} e^{\frac{iz^2}{2}}\bigg|
	\\
=& \;  \Big| e^{-2i\nu\ln_0(z)} \Big| \Big|  (d_1^{-1}-1) \hat{r}_{1,a}^{*}(k)+\big(\hat{r}_{1,a}^{*}(k)
- \hat{r}_{1,a}^{*}(k_{0})\big)\Big| \big| e^{\frac{iz^2}{2}} \big|
	\\
\leq &\; C \bigg(|d_1^{-1} - 1||\hat{r}_{1,a}^{*}(k)| + |\hat{r}_{1,a}^{*}(k)
- \hat{r}_1^{*}(k_0) | \bigg)e^{-\frac{|z|^2}{2}}.
	\\
\leq &\; C \bigg(|d_1^{-1} - 1| + |k-k_0| \bigg) e^{\frac{t}{4}|\re \Phi_{21}(\zeta, k)|} e^{-\frac{|z|^2}{2}}.
	\\
\leq &\; C \bigg(|d_1^{-1} - 1| + |k-k_0| \bigg)e^{-c t |k-k_0|^2}, \qquad k \in \mathcal{X}_1^\epsilon,
\end{align*}
where we have used \eqref{estimate r1ahatstar at k0} and \eqref{estimate r1ahatstar at infty}. Utilizing (\ref{d1estimate}), this gives
\begin{align*}
|(\tilde{v} - v^X)_{21}| \leq C |k-k_0|(1+ |\ln|k-k_0||)e^{-c t |k-k_0|^2}, \qquad k \in \mathcal{X}_1^\epsilon.
\end{align*}
Hence
$$\|(\tilde{v} - v^X)_{21}\|_{L^1(\mathcal{X}_1^\epsilon)}
\leq C \int_0^\infty u(1+ |\ln u|) e^{-ctu^2} du \leq Ct^{-1}\ln t$$
and
$$\|(\tilde{v} - v^X)_{21}\|_{L^{\infty}(\mathcal{X}_1^\epsilon)}
\leq C \sup_{u \geq 0} u(1+ |\ln u|) e^{-ctu^2} \leq Ct^{-1/2}\ln t.$$

To analyze $\partial_{x}(\tilde{v} - v^X)_{21}$, we split it into three parts as follows:
\begin{align*}
& \partial_{x}(\tilde{v} - v^X)_{21} = b_{1} + b_{2} + b_{3}, 
\end{align*}
where
\begin{align*}
& b_{1} = \partial_{x}( e^{-2i\nu\ln_0(z)}) \Big(  (d_1^{-1}-1) \hat{r}_{1,a}^{*}(k)+\big(\hat{r}_{1,a}^{*}(k)
- \hat{r}_{1,a}^{*}(k_{0})\big)\Big)  e^{\frac{iz^2}{2}}, \\
& b_{2} =  e^{-2i\nu\ln_0(z)} \partial_{x}\Big(  (d_1^{-1}-1) \hat{r}_{1,a}^{*}(k)+\big(\hat{r}_{1,a}^{*}(k)
- \hat{r}_{1,a}^{*}(k_{0})\big)\Big)  e^{\frac{iz^2}{2}}, \\
& b_{3} =  e^{-2i\nu\ln_0(z)} \Big(  (d_1^{-1}-1) \hat{r}_{1,a}^{*}(k)+\big(\hat{r}_{1,a}^{*}(k)
- \hat{r}_{1,a}^{*}(k_{0})\big)\Big) \partial_{x}e^{\frac{iz^2}{2}}.
\end{align*}
For $b_{1}$, we use that $| \partial_{x}( e^{-2i\nu\ln_0(z)}) | \leq \frac{C}{t(k-k_{0})}$ for $k \in \mathcal{X}_{1}^{\epsilon}$, and thus, by \eqref{bounds on analytic part}, 
\begin{align*}
& \|b_{1}\|_{L^1(\mathcal{X}_1^\epsilon)}
\leq C \, t^{-1} \int_0^\infty (1+ \ln u) e^{-ctu^2} du \leq Ct^{-3/2}\ln t, \\
& \|b_{1}\|_{L^{\infty}(\mathcal{X}_1^\epsilon)}
\leq C \, t^{-1} \sup_{u \geq 0} (1+ \ln u) e^{-ctu^2} \leq Ct^{-1}\ln t.
\end{align*}
The norms of $b_{2}$ and $b_{3}$ are estimated in a similar way. This completes the proof of (\ref{v3vk0estimate}).

The variable $z$ goes to infinity as $t \to \infty$ if $k \in \partial D_\epsilon(k_0)$, because
$$|z| = 3^{1/4}\sqrt{2t}|k-k_0|.$$
Thus equation (\ref{mXasymptotics}) yields
\begin{align*}
 & m^X(q(\zeta), z(\zeta, k)) = I + \frac{m_1^X(q(\zeta))}{3^{1/4}\sqrt{2t}(k-k_0)} + O(t^{-1}),
\qquad  t \to \infty,
 \end{align*}
uniformly with respect to $k \in \partial D_\epsilon(k_0)$ and $\zeta \in \mathcal{I}$, and this asymptotic formula can be differentiated with respect to $x$ without increasing the error term.
Recalling the definition (\ref{mk0def}) of $m^{k_0}$, this gives
\begin{align}\label{mmodmmuI}
 & (m^{k_0})^{-1} - I = - \frac{Y(\zeta, t)m_1^X(q(\zeta)) Y(\zeta, t)^{-1}}{3^{1/4}\sqrt{2t}(k-k_0)} + O(t^{-1}), \qquad  t \to \infty,
 \end{align}
uniformly for $k \in \partial D_\epsilon(k_0)$ and $\zeta \in \mathcal{I}$. In view of (\ref{Ybound}), the asymptotics \eqref{mmodmmuI} can be differentiated with respect to $x$. This proves (\ref{mmodmuestimate2}).
Equation (\ref{mmodmuestimate1}) follows from (\ref{mmodmmuI}) and Cauchy's formula.
\end{proof}

\section{A small-norm RH problem}\label{smallnormsec}
We use the symmetry
$$m^{k_0}(x,t,k) = \mathcal{A} m^{k_0}(x,t,\omega k)\mathcal{A}^{-1}$$
to extend the domain of definition of $m^{k_0}$ from $D_\epsilon(k_0)$ to $\mathcal{D}$, where we recall that $\mathcal{D} = D_\epsilon(k_0) \cup \omega D_\epsilon(k_0) \cup \omega^2 D_\epsilon(k_0)$.
We will show that the solution $\hat{m}(x,t,k)$ defined by
$$\hat{m} =
\begin{cases}
m^{(3)} (m^{k_0})^{-1}, & k \in \mathcal{D}, \\
m^{(3)}, & \text{elsewhere},
\end{cases}
$$
is small for large $t$. 
Let $\hat{\Gamma} = \Gamma^{(3)} \cup \partial \mathcal{D}$ be the contour displayed in Figure \ref{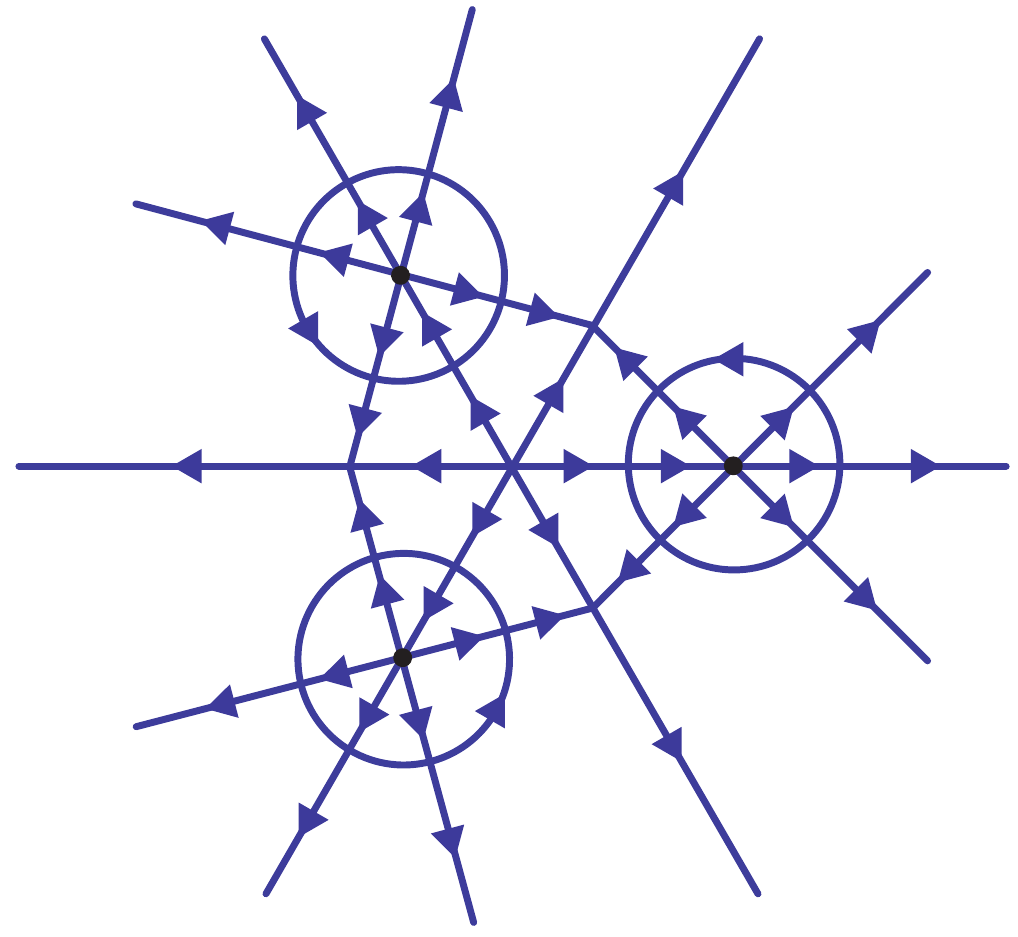} and define the jump matrix $\hat{v}$ by
$$\hat{v}= \begin{cases}
v^{(3)}, & k \in \hat{\Gamma} \setminus \bar{\mathcal{D}},
	\\
(m^{k_0})^{-1}, & k \in \partial \mathcal{D},
	\\
m_-^{k_0} v^{(3)}(m_+^{k_0})^{-1}, & k \in \hat{\Gamma} \cap \mathcal{D}.
\end{cases}
$$
The function $\hat{m}$ satisfies the following RH problem.

\begin{RHproblem}[RH problem for $\hat{m}$]\label{RHmhat}
Find a $3 \times 3$-matrix valued function $\hat{m}(x,t,\cdot) \in I + \dot{E}^{3}( \C \setminus \hat{\Gamma})$ such that $\hat{m}_+(x,t,k) = \hat{m}_-(x, t, k) \hat{v}(x, t, k)$ for a.e. $k \in \hat{\Gamma}$.
\end{RHproblem}

\begin{figure}
\begin{center}
 \begin{overpic}[width=.7\textwidth]{Gammahat.pdf}
  \put(101,45){\small $\hat{\Gamma}$}
  \put(86.5,56){\small $1$}
 \put(64,33){\small $3$}
 \put(86.5,34){\small $4$}
 \put(68,72){\small $5$}
 \put(56,52){\small $6$}
 \put(56.5,42){\small $7$}
 \put(90,42){\small $8$}
 \put(70.8,41.5){\small $k_0$}
 \put(41,66){\small $\omega k_0$}
 \put(30.5,28.5){\small $\omega^2 k_0$}
   \end{overpic}
     \begin{figuretext}\label{Gammahat.pdf}
       The contour $\hat{\Gamma} = \Gamma^{(3)} \cup \partial \mathcal{D}$ in the complex $k$-plane.
     \end{figuretext}
     \end{center}
\end{figure}

Let $\hat{\mathcal{X}}^\epsilon$ denote the union of the cross $\mathcal{X}^\epsilon$ and its images under the maps $k \mapsto\omega k$ and $k \mapsto \omega^2 k$, i.e., $\hat{\mathcal{X}}^\epsilon = \mathcal{X}^\epsilon \cup \omega \mathcal{X}^\epsilon \cup \omega^2 \mathcal{X}^\epsilon$.
Define the contour $\Gamma'$ by 
$$\Gamma' = \hat{\Gamma} \setminus (\Gamma \cup \hat{\mathcal{X}}^\epsilon\cup \partial \mathcal{D}).$$

\begin{lemma}\label{whatlemma}
Let $\hat{w} = \hat{v}-I$. The following estimates hold uniformly for $t \geq 2$ and $\zeta \in \mathcal{I}$:
\begin{subequations}\label{hatwestimate}
\begin{align}\label{hatwestimate1}
& \|(1+|\cdot|)\partial_{x}^{l}\hat{w}\|_{(L^1\cap L^\infty)(\Gamma)} \leq \frac{C}{k_0 t}, \qquad
	\\\label{hatwestimate2}
& \|(1+|\cdot|)\partial_{x}^{l}\hat{w}\|_{(L^1 \cap L^{\infty})(\Gamma')} \leq C e^{-ct},
	\\\label{hatwestimate3}
& \|\partial_{x}^{l}\hat{w}\|_{(L^1\cap L^{\infty})(\partial \mathcal{D})} \leq C t^{-1/2},	\\\label{hatwestimate4}
& \|\partial_{x}^{l}\hat{w}\|_{L^1(\hat{\mathcal{X}}^\epsilon)} \leq C t^{-1}\ln t,
	\\\label{hatwestimate5}
& \|\partial_{x}^{l}\hat{w}\|_{L^\infty(\hat{\mathcal{X}}^\epsilon)} \leq C t^{-1/2}\ln t,
\end{align}
\end{subequations}
with $l=0,1$.
\end{lemma}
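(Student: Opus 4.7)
My plan is to estimate $\hat w=\hat v-I$ piece by piece according to the decomposition $\hat\Gamma=\Gamma\cup\hat{\mathcal X}^\epsilon\cup\partial\mathcal D\cup\Gamma'$, using the three cases in the definition of $\hat v$. Thus $\hat w=v^{(3)}-I$ on every part of $\hat\Gamma$ lying outside $\bar{\mathcal D}$; $\hat w=(m^{k_0})^{-1}-I$ on $\partial\mathcal D$; and $\hat w=m_-^{k_0}(v^{(3)}-v^{k_0})(m_+^{k_0})^{-1}$ on $\hat\Gamma\cap\mathcal D$. The last formula collapses to $m^{k_0}(v^{(3)}-I)(m^{k_0})^{-1}$ on the portion of $\Gamma$ that sits inside $\mathcal D$, since $m^{k_0}$ is analytic across those pieces and so $v^{k_0}=I$ there. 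The symmetries (\ref{Z3vjsymm})--(\ref{mjsymm}) reduce every estimate to a neighborhood of $k_0$; the contributions at $\omega k_0$ and $\omega^2 k_0$ are then identical up to relabeling.

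Inequality (\ref{hatwestimate3}) is immediate from (\ref{mmodmuestimate2}) once I note that $\partial\mathcal D$ has uniformly bounded total arc-length (three circles of radius $\epsilon=k_0/2$), so the $L^\infty$ bound self-upgrades to an $L^1$ bound. Inequality (\ref{hatwestimate2}) on $\Gamma'$ is the easiest: by construction $\Gamma'$ stays a positive distance from all three critical points (and from the origin), so each phase $\Phi_{ij}$ satisfies $|\re\Phi_{ij}|\geq c>0$ pointwise along $\Gamma'$, and the explicit factors $e^{\pm t\Phi_{ij}}$ in $v^{(3)}$—combined with (\ref{bounds on analytic part}), (\ref{delta1bound}), and (\ref{norm remainder r2r})--(\ref{norm remainder r1rhat})—supply the $O(e^{-ct})$ decay in $L^1\cap L^\infty$; the argument is identical for $\partial_x\hat w$ since $\partial_x$ costs at most a polynomial factor in $t$. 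Inequality (\ref{hatwestimate1}) on $\Gamma$ is a direct consequence of the subcontour-wise bounds (\ref{v3estimatesa})--(\ref{v3estimatesd}) of Lemma \ref{v3lemma} (and their $\mathcal A$-rotates); on the portion of $\Gamma$ inside $\mathcal D$ these bounds are conjugated by $m^{k_0}$, but $m^{k_0}$ and $\partial_x m^{k_0}$ are uniformly bounded there thanks to (\ref{mk0def}), (\ref{Ybound}), and the boundedness of $m^X$, so the conjugation is harmless and in fact yields the $(k_0 t)^{-1}$ bound with room to spare.

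The main content is in (\ref{hatwestimate4})--(\ref{hatwestimate5}) on $\hat{\mathcal X}^\epsilon$. There the identity $\hat w=m_-^{k_0}(v^{(3)}-v^{k_0})(m_+^{k_0})^{-1}$ reduces the task to the matching estimate (\ref{v3vk0estimate}), provided I can sandwich by uniformly bounded factors. Uniform boundedness of $m^{k_0}$ on $\mathcal X^\epsilon$ comes from (\ref{mk0def}): the prefactor $Y(\zeta,t)^{\pm 1}$ is controlled by (\ref{Ybound}), and $m^X(q(\zeta),z(\zeta,k))$ is uniformly bounded on the four rays of $X$ by the classical theory of the parabolic-cylinder model problem. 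For the $l=1$ bound I additionally need $\partial_x m^{k_0}$ bounded, obtained by differentiating (\ref{mk0def}) and invoking (\ref{d0estimate derivative}), (\ref{d1estimate derivative}), the smooth dependence of $m^X(q,z)$ on $q=q(\zeta)$, and $\partial_x=t^{-1}\partial_\zeta$. Sandwiching then delivers (\ref{hatwestimate4})--(\ref{hatwestimate5}) directly from (\ref{v3vk0estimate}).

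The main obstacle is the derivative bookkeeping on $\hat{\mathcal X}^\epsilon$: because $Y(\zeta,t)$ absorbs both the oscillatory factor $e^{-\tfrac{t}{2}\Phi_{21}(\zeta,k_0)}$ and the logarithmic $(2\sqrt{3}\,t)^{-i\nu}$, differentiating in $x$ produces terms of order $(\ln t)/t$ that must be tracked carefully to verify they fit inside the $Ct^{-1}\ln t$ and $Ct^{-1/2}\ln t$ tolerances. No new estimates beyond those already collected are needed; once this accounting is in place, all five bounds in (\ref{hatwestimate}) follow at once.
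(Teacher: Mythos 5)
Your proposal is correct and follows essentially the same route as the paper: the same case decomposition of $\hat{\Gamma}$, reduction of (\ref{hatwestimate1}) to Lemma \ref{v3lemma} after conjugating by the bounded $m^{k_0}_\pm$ inside $\mathcal{D}$, direct exponential estimates on $\Gamma'$, the bound (\ref{mmodmuestimate2}) on $\partial\mathcal{D}$, and the identity $\hat{w}=m_-^{k_0}(v^{(3)}-v^{k_0})(m_+^{k_0})^{-1}$ together with (\ref{v3vk0estimate}) on $\hat{\mathcal{X}}^\epsilon$. The one place to sharpen your wording is on $\Gamma'$: since its rays are unbounded, the weighted $L^1$ bound requires the quadratic growth of $|\re \Phi_{ij}|$ along the rays (e.g. $\re\Phi_{21}(\zeta,k_0+ue^{\pi i/4})=-\sqrt{3}u^2$, as the paper computes) combined with the decay in (\ref{bounds on analytic part}), not merely the uniform lower bound $|\re\Phi_{ij}|\geq c>0$.
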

\begin{proof}
Using that $\partial_{\zeta}^{l}m_{\pm}^{k_0}$ and its inverse are uniformly bounded for $k \in \hat{\Gamma} \cap \mathcal{D}$ and $l=0,1$, the estimate (\ref{hatwestimate1}) follows from Lemma \ref{v3lemma}.

The contour $\Gamma'$ consists of the set $(\cup_{j=1}^4 \Gamma_j^{(3)})\setminus\bar{\mathcal{D}}$ and the images of this set under the rotations $k \mapsto \omega k$ and $k\mapsto \omega^2 k$. We estimate the $L^1$ and $L^\infty$ norms of $(1+|\cdot|)\partial_{x}^{l}\hat{w}$ on $\Gamma_j^{(3)} \setminus \bar{\mathcal{D}}$ for $j = 1$; similar arguments apply when $j = 2,3,4$, and (\ref{hatwestimate2}) then follows by symmetry.
We parametrize $\Gamma_1^{(3)} \setminus \bar{\mathcal{D}}$ by $k = k_0 + ue^{\frac{\pi i}{4}}$, $u > k_0/2$. Only the $(21)$ and $(23)$ elements of $\hat{w}=v_{1}^{(3)}-I$ are nonzero. Using \eqref{estimate r2astar at infty}, \eqref{delta1bound}, and \eqref{rePhi12}, the $(23)$-entry can be estimated as
\begin{align*}
|\hat{w}_{23}(x,t, k_0 + ue^{\frac{\pi i}{4}})|
\leq C|r_{2,a}^{*}(\omega k)| e^{-t\Phi_{32}}
\leq C e^{-\frac{3t}{4}\Phi_{32}}
\leq C e^{-ct(k_0 + u)^2}.
\end{align*}
The analysis of $|\partial_{x}\hat{w}_{23}|$ is similar.
Using  \eqref{estimate r1ahatstar at k0}, \eqref{delta1bound}, and the identity
\begin{align}\label{rePhi23}
\re \Phi_{21}(\zeta, k_0 + ue^{\frac{\pi i}{4}}) = -\sqrt{3} u^2, \qquad u \geq 0,
\end{align}
the $(21)$-entry can be estimated as
\begin{align*}
|\hat{w}_{21}(x,t, k_0 + ue^{\frac{\pi i}{4}})|
\leq C|\hat{r}_{1,a}^{*}| e^{t\Phi_{21}}
\leq C e^{c  t  \Phi_{21}}
\leq C e^{-ctu^2}, \qquad u \geq 0.
\end{align*}
Using in addition \eqref{estimate r1ahatstar at infty} and \eqref{der x of delta1}, we conclude that $|\partial_{x}\hat{w}_{21}(x,t,k_{0}+ue^{\frac{\pi i}{4}})| \leq C e^{-c t u^{2}}$.
Hence
$$|\partial_{x}^{l}\hat{w}(x,t, k_{0}+ue^{\frac{\pi i}{4}})| \leq C e^{-ctu^2}, \qquad u>k_{0}/2, \ l = 0,1.$$
It follows that the $L^1$ and $L^\infty$ norms of $(1+|\cdot|)\partial_{x}^{l}\hat{w}$, $l=0,1$, are $O(e^{-ct})$ as $t \to \infty$ on $\Gamma_1^{(3)} \setminus \bar{\mathcal{D}}$. This proves (\ref{hatwestimate2}).

The estimates in (\ref{hatwestimate3}) are immediate from (\ref{mmodmuestimate2}).

For $k \in \mathcal{X}^\epsilon$, we have $\hat{w} = m_-^{k_0} (v^{(3)} - v^{k_0}) (m_+^{k_0})^{-1}$, so (\ref{hatwestimate4}) and (\ref{hatwestimate5}) follow from (\ref{v3vk0estimate}) combined with the fact that $\partial_{\zeta}^{l}m_{\pm}^{k_0}$ and its inverse are uniformly bounded for $k \in \hat{\Gamma} \cap \mathcal{D}$ and $l=0,1$. 
\end{proof}
For a function $h$ defined on $\hat{\Gamma}$, the Cauchy transform $\hat{\mathcal{C}}h$ is defined by
\begin{align*}
(\hat{\mathcal{C}}h)(z) = \frac{1}{2\pi i} \int_{\hat{\Gamma}} \frac{h(z')dz'}{z' - z}, \qquad z \in \C \setminus \hat{\Gamma}.
\end{align*}
If $h \in \dot{L}^3(\hat{\Gamma})$, then $\hat{\mathcal{C}}h \in \dot{E}^{3}(\C \setminus \hat{\Gamma})$, and the left and right nontangential boundary values of $\hat{\mathcal{C}}h$, which we denote by $\hat{\mathcal{C}}_+ h$ and $\hat{\mathcal{C}}_- h$ respectively, exist a.e. on $\hat{\Gamma}$ and belong to $\dot{L}^3(\hat{\Gamma})$; furthermore, $\hat{\mathcal{C}}_\pm \in \mathcal{B}(\dot{L}^3(\hat{\Gamma}))$ and $\hat{\mathcal{C}}_+ - \hat{\mathcal{C}}_- = I$, where $\mathcal{B}(\dot{L}^3(\hat{\Gamma}))$ denotes the space of bounded linear operators on $\dot{L}^3(\hat{\Gamma})$, see \cite[Theorems 4.1 and 4.2]{LenellsCarleson}. 

The estimates in Lemma \ref{whatlemma} show that
\begin{align}\label{hatwLinfty}
\begin{cases}
\|(1+|\cdot|)\partial_{x}^{l}\hat{w}\|_{L^1(\hat{\Gamma})}\leq C t^{-1/2},
	\\
\|(1+|\cdot|)\partial_{x}^{l}\hat{w}\|_{L^\infty(\hat{\Gamma})}\leq C t^{-1/2}\ln t,
\end{cases}	 \qquad t \geq 2, \ \zeta \in \mathcal{I}, \ l = 0,1,
\end{align}
and hence, employing the general identity $\| f \|_{L^p} \leq \| f \|_{L^1}^{1/p}\|f \|_{L^{\infty}}^{(p-1)/p}$,
\begin{align}\label{Lp norm of what}
& \|(1+|\cdot|)\partial_{x}^{l}\hat{w}\|_{L^p(\hat{\Gamma})} 
\leq C t^{-\frac{1}{2}} (\ln t)^{\frac{p-1}{p}},  \qquad t \geq 2, \ \zeta \in \mathcal{I}, \ l = 0,1,
\end{align}
for each $1 \leq p \leq \infty$. The estimates \eqref{Lp norm of what} imply that
$\hat{w} \in \dot{L}^3(\hat{\Gamma}) \cap L^{\infty}(\hat{\Gamma})$.
We define $\hat{\mathcal{C}}_{\hat{w}}=\hat{\mathcal{C}}_{\hat{w}(x,t,\cdot)}: \dot{L}^{3}(\hat{\Gamma})+L^{\infty}(\hat{\Gamma}) \to \dot{L}^{3}(\hat{\Gamma})$ by $\hat{\mathcal{C}}_{\hat{w}}h := \hat{\mathcal{C}}_{-}(h \hat{w})$.

\begin{lemma}\label{invertiblelemma}
There exists a $T > 0$ such that $I - \hat{\mathcal{C}}_{\hat{w}(x, t, \cdot)} \in \mathcal{B}(\dot{L}^{3}(\hat{\Gamma}))$ is invertible whenever $t \geq T$ and $\zeta \in \mathcal{I}$.
\end{lemma}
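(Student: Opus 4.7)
The plan is a standard small-norm argument: show that $\hat{\mathcal{C}}_{\hat{w}}$ has operator norm tending to $0$ on $\dot{L}^3(\hat{\Gamma})$ as $t \to \infty$, uniformly for $\zeta \in \mathcal{I}$, and invert $I - \hat{\mathcal{C}}_{\hat{w}}$ by Neumann series for $t$ large.

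First, the excerpt already records that $\hat{\mathcal{C}}_- \in \mathcal{B}(\dot{L}^3(\hat{\Gamma}))$, invoking \cite[Theorems 4.1 and 4.2]{LenellsCarleson}. Hence, for $h \in \dot{L}^3(\hat{\Gamma})$,
\begin{equation*}
\|\hat{\mathcal{C}}_{\hat{w}} h\|_{\dot{L}^3(\hat{\Gamma})}
= \|\hat{\mathcal{C}}_-(h\hat{w})\|_{\dot{L}^3(\hat{\Gamma})}
\leq \|\hat{\mathcal{C}}_-\|_{\mathcal{B}(\dot{L}^3(\hat{\Gamma}))} \, \|h\hat{w}\|_{\dot{L}^3(\hat{\Gamma})}.
\end{equation*}
Since the weight $(1+|k|)^{1/3}$ defining $\dot{L}^3$ does not interact with $\hat{w}$, multiplication by $\hat{w}$ is bounded on $\dot{L}^3(\hat{\Gamma})$ with norm at most $\|\hat{w}\|_{L^\infty(\hat{\Gamma})}$:
\begin{equation*}
\|h\hat{w}\|_{\dot{L}^3(\hat{\Gamma})}
= \|(1+|\cdot|)^{1/3} h \hat{w}\|_{L^3(\hat{\Gamma})}
\leq \|\hat{w}\|_{L^\infty(\hat{\Gamma})} \, \|h\|_{\dot{L}^3(\hat{\Gamma})}.
\end{equation*}

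Next, I would simply quote the $L^\infty$ bound already compiled in \eqref{hatwLinfty} out of Lemma \ref{whatlemma}, namely $\|\hat{w}\|_{L^\infty(\hat{\Gamma})} \leq C t^{-1/2} \ln t$ for $t \geq 2$, $\zeta \in \mathcal{I}$ (noting that $(1+|k|)\geq 1$ converts the weighted bounds \eqref{hatwestimate1}--\eqref{hatwestimate2} into plain $L^\infty$ bounds, while \eqref{hatwestimate3} and \eqref{hatwestimate5} are already unweighted). Combining,
\begin{equation*}
\|\hat{\mathcal{C}}_{\hat{w}}\|_{\mathcal{B}(\dot{L}^3(\hat{\Gamma}))} \leq C' t^{-1/2}\ln t, \qquad t \geq 2, \ \zeta \in \mathcal{I}.
\end{equation*}

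Finally, choose $T \geq 2$ large enough that $C' T^{-1/2}\ln T \leq 1/2$; then $\|\hat{\mathcal{C}}_{\hat{w}(x,t,\cdot)}\|_{\mathcal{B}(\dot{L}^3(\hat{\Gamma}))} \leq 1/2$ for all $t \geq T$ and all $\zeta \in \mathcal{I}$, so $I - \hat{\mathcal{C}}_{\hat{w}}$ is invertible on $\dot{L}^3(\hat{\Gamma})$ via the Neumann series $\sum_{n \geq 0} \hat{\mathcal{C}}_{\hat{w}}^n$. I do not expect any serious obstacle here, since the hard work (Lemmas \ref{v3lemma}, \ref{k0lemma}, \ref{whatlemma}) has already been carried out; the only bookkeeping point is verifying that the bound on $\|\hat{w}\|_{L^\infty(\hat{\Gamma})}$ is indeed uniform for $\zeta$ in the compact set $\mathcal{I}$, which is built into Lemma \ref{whatlemma}.
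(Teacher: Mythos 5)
Your proposal is correct and follows essentially the same route as the paper: bound $\|\hat{\mathcal{C}}_{\hat{w}}\|_{\mathcal{B}(\dot{L}^3(\hat{\Gamma}))}$ by $\|\hat{\mathcal{C}}_-\|_{\mathcal{B}(\dot{L}^3(\hat{\Gamma}))}\|\hat{w}\|_{L^\infty(\hat{\Gamma})}$, invoke the uniform estimate \eqref{hatwLinfty} to make this small for large $t$, and invert by Neumann series. The only cosmetic difference is that you spell out the multiplication-operator bound and the explicit $t^{-1/2}\ln t$ rate, whereas the paper simply picks $T$ so that $\|\hat{w}\|_{L^\infty(\hat{\Gamma})} < \|\hat{\mathcal{C}}_-\|^{-1}$.
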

\begin{proof}
Let $K:=\|\hat{\mathcal{C}}_{-}\|_{\mathcal{B}(\dot{L}^3(\hat{\Gamma}))}$. For each $h \in \dot{L}^{3}(\hat{\Gamma})$, we have $\|\hat{\mathcal{C}}_{\hat{w}}h\|_{\dot{L}^{3}(\hat{\Gamma})} \leq K\|\hat{w}\|_{L^\infty(\hat{\Gamma})} \|h\|_{\dot{L}^{3}(\hat{\Gamma})}$,
 and thus $\|\hat{\mathcal{C}}_{\hat{w}}\|_{\mathcal{B}(\dot{L}^{3}(\hat{\Gamma}))} \leq K \|\hat{w}\|_{L^\infty(\hat{\Gamma})}$.
By \eqref{hatwLinfty}, there exists a $T > 0$ such that $\|\hat{w}\|_{L^\infty(\hat{\Gamma})}<K^{-1}$ for $t \geq T$.
\end{proof}
In view of Lemma \ref{invertiblelemma}, we may define $\hat{\mu}(x, t, k)$ for $k \in \hat{\Gamma}$, $t \geq T$, and $\zeta = \frac{x}{t} \in \mathcal{I}$ by
\begin{align}\label{hatmudef}
\hat{\mu} = I + (I - \hat{\mathcal{C}}_{\hat{w}})^{-1}\hat{\mathcal{C}}_{\hat{w}}I \in I + \dot{L}^{3}(\hat{\Gamma}).
\end{align}
\begin{lemma}\label{lemma:mhat exists}
For $t \geq T$ and $\zeta \in \mathcal{I}$, there exists a unique solution $\hat{m} \in I + \dot{E}^{3}(\mathbb{C}\setminus \hat{\Gamma})$ of RH problem \ref{RHm}. This solution is given by
\begin{align}\label{hatmrepresentation}
\hat{m}(x, t, k) = I + \hat{\mathcal{C}}(\hat{\mu}\hat{w}) = I + \frac{1}{2\pi i}\int_{\hat{\Gamma}} \hat{\mu}(x, t, s) \hat{w}(x, t, s) \frac{ds}{s - k}.
\end{align}
\end{lemma}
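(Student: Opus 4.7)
The plan is to construct $\hat m$ by the standard Beals--Coifman formula: define $\hat\mu$ as in \eqref{hatmudef}, set $\hat m := I + \hat{\mathcal{C}}(\hat\mu\hat w)$, and then verify the three items of RH problem \ref{RHmhat} (analyticity/class, jump, normalization) together with uniqueness. All the analytic groundwork has already been done: by \eqref{Lp norm of what} the jump defect satisfies $\hat w \in \dot L^3(\hat\Gamma)\cap L^\infty(\hat\Gamma)$, and by Lemma \ref{invertiblelemma} the operator $I-\hat{\mathcal C}_{\hat w}$ is invertible on $\dot L^3(\hat\Gamma)$ for all $t\ge T$ and $\zeta\in\mathcal I$.

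First I would verify that $\hat\mu-I \in \dot L^3(\hat\Gamma)$ and that $\hat\mu\hat w \in \dot L^3(\hat\Gamma)$. The first assertion is immediate from \eqref{hatmudef}, because $\hat{\mathcal{C}}_{\hat w}I = \hat{\mathcal C}_-\hat w \in \dot L^3(\hat\Gamma)$ (as $\hat w \in \dot L^3(\hat\Gamma)$ and $\hat{\mathcal C}_-\in\mathcal B(\dot L^3(\hat\Gamma))$) and $(I-\hat{\mathcal C}_{\hat w})^{-1}$ preserves $\dot L^3(\hat\Gamma)$. The second follows by writing $\hat\mu\hat w = \hat w + (\hat\mu-I)\hat w$: the first summand lies in $\dot L^3(\hat\Gamma)$ by \eqref{Lp norm of what}, and the second lies in $\dot L^3(\hat\Gamma)$ since $\hat\mu-I\in\dot L^3(\hat\Gamma)$ and $\hat w\in L^\infty(\hat\Gamma)$.

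Next I would show existence. Set $\hat m := I + \hat{\mathcal C}(\hat\mu\hat w)$ on $\C\setminus\hat\Gamma$. Since $\hat\mu\hat w\in\dot L^3(\hat\Gamma)$, the mapping properties of the Cauchy operator quoted in the notation section give $\hat m - I \in \dot E^3(\C\setminus\hat\Gamma)$, and the non\-tangential boundary values $\hat m_\pm$ exist a.e.\ on $\hat\Gamma$. The defining equation \eqref{hatmudef} is equivalent to $(I-\hat{\mathcal C}_{\hat w})\hat\mu = I$, i.e.\ $\hat\mu = I + \hat{\mathcal C}_-(\hat\mu\hat w)$, so $\hat m_- = I + \hat{\mathcal C}_-(\hat\mu\hat w) = \hat\mu$ a.e.\ on $\hat\Gamma$. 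The Plemelj jump identity $\hat{\mathcal C}_+-\hat{\mathcal C}_- = I$ then yields
\begin{equation*}
\hat m_+ - \hat m_- = \hat\mu\hat w = \hat m_-\, \hat w = \hat m_-(\hat v - I),
\end{equation*}
which is exactly the jump condition $\hat m_+ = \hat m_-\hat v$ a.e.\ on $\hat\Gamma$. Hence $\hat m$ is a solution, and the representation \eqref{hatmrepresentation} holds.

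Finally, uniqueness follows from the general $L^p$ uniqueness principle recorded before RH problem \ref{RHm}: the solution of an $n\times n$-matrix $L^p$-RH problem is unique whenever it exists, provided $1\le n\le p$ (\cite[Theorem 5.6]{LenellsCarleson}). Here $n = p = 3$, so any two solutions of RH problem \ref{RHmhat} in $I+\dot E^3(\C\setminus\hat\Gamma)$ must coincide. The only minor obstacle is the verification $\hat\mu\hat w\in\dot L^3(\hat\Gamma)$ handled above, which is needed to put $\hat m - I$ in the class $\dot E^3$; once this is in place the rest is routine small-norm theory.
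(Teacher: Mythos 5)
Your argument is correct and is essentially the paper's approach: the paper disposes of this lemma in one line by citing the general existence/uniqueness result for $L^p$-RH problems with $\hat{w} \in \dot{L}^3(\hat{\Gamma}) \cap L^\infty(\hat{\Gamma})$ (\cite[Proposition 5.8]{LenellsCarleson}), and your proof simply unpacks that proposition's standard Beals--Coifman argument in this specific setting. All the individual steps you write out (membership $\hat{\mu}\hat{w} \in \dot{L}^3(\hat{\Gamma})$, the identity $\hat{m}_- = \hat{\mu}$, the Plemelj jump computation, and uniqueness via \cite[Theorem 5.6]{LenellsCarleson} with $n = p = 3$) check out.
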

\begin{proof}
Since $\hat{w} \in \dot{L}^3(\hat{\Gamma}) \cap L^{\infty}(\hat{\Gamma})$, this follows from \cite[Proposition 5.8]{LenellsCarleson}.
\end{proof}
\begin{lemma}\label{lemma: estimate on mu}
Let $1 \leq p < \infty$. For all sufficiently large $t$, we have
\begin{align*}
& \|\partial_{x}^{l}(\hat{\mu} - I)\|_{L^p(\hat{\Gamma})} \leq  C t^{-\frac{1}{2}}(\ln t)^{\frac{p-1}{p}}, \qquad l=0,1, \quad \zeta \in \mathcal{I}.
\end{align*}
\end{lemma}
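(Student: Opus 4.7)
My starting point is the resolvent identity
\[
\hat{\mu} - I = (I - \hat{\mathcal{C}}_{\hat{w}})^{-1}\hat{\mathcal{C}}_{-}\hat{w},
\]
which follows from the definition \eqref{hatmudef} of $\hat{\mu}$ together with the invertibility established in Lemma \ref{invertiblelemma}. Although that invertibility is formulated in $\dot{L}^{3}(\hat{\Gamma})$, the smallness of $\hat{w}$ in $L^{\infty}$ recorded in \eqref{hatwLinfty} is strong enough to upgrade it to invertibility on $L^{p}(\hat{\Gamma})$ for every $p\in(1,\infty)$, and the desired $L^{p}$-bound on $\hat{\mu}-I$ then follows from the corresponding $L^{p}$-bound on $\hat{w}$ recorded in \eqref{Lp norm of what}.

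More precisely, for $p\in(1,\infty)$ I would invoke the classical $L^{p}$-boundedness of the Cauchy projector $\hat{\mathcal{C}}_{-}$ on the piecewise smooth contour $\hat{\Gamma}$; call the operator norm $K_{p}$, depending only on $p$ and on the (fixed) geometry of $\hat{\Gamma}$. Then $\|\hat{\mathcal{C}}_{\hat{w}}\|_{\mathcal{B}(L^{p})}\leq K_{p}\|\hat{w}\|_{L^{\infty}}$, which by \eqref{hatwLinfty} is strictly less than $\tfrac12$ for all $t$ sufficiently large. A geometric-series argument yields $\|(I - \hat{\mathcal{C}}_{\hat{w}})^{-1}\|_{\mathcal{B}(L^{p})}\leq 2$, and hence
\[
\|\hat{\mu}-I\|_{L^{p}(\hat{\Gamma})} \leq 2K_{p}\|\hat{w}\|_{L^{p}(\hat{\Gamma})} \leq C\,t^{-1/2}(\ln t)^{(p-1)/p}.
\]
The endpoint $p=1$ must be handled separately, since $\hat{\mathcal{C}}_{-}$ fails to be bounded on $L^{1}$: on the bounded pieces of $\hat{\Gamma}$ (the circles $\partial\mathcal{D}$, the crosses $\hat{\mathcal{X}}^{\epsilon}$, and the bounded subcontours) H\"older's inequality reduces the $L^{1}$-bound to the already-proved $L^{p}$-bound for any $p>1$, while on the unbounded rays I would exploit the weighted decay $(1+|\cdot|)\hat{w}\in L^{1}\cap L^{\infty}$ from \eqref{hatwestimate1}--\eqref{hatwestimate2} and an expansion of the Cauchy kernel at infinity to transfer sufficient $|k|^{-1}$-decay to $\hat{\mathcal{C}}_{-}(\hat{\mu}\hat{w})(k)$.

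For the $l=1$ estimate I would differentiate $\hat{\mu}=I+\hat{\mathcal{C}}_{-}(\hat{\mu}\hat{w})$ in $x$, using that $\partial_{x}$ commutes with $\hat{\mathcal{C}}_{-}$, to obtain
\[
\partial_{x}\hat{\mu}=(I-\hat{\mathcal{C}}_{\hat{w}})^{-1}\hat{\mathcal{C}}_{-}\bigl(\hat{\mu}\,\partial_{x}\hat{w}\bigr),
\]
decompose $\hat{\mu}\,\partial_{x}\hat{w}=\partial_{x}\hat{w}+(\hat{\mu}-I)\partial_{x}\hat{w}$, and treat the first summand via the $l=1$ instance of \eqref{Lp norm of what} and the second via H\"older combining the just-proved $l=0$ bound on $\|\hat{\mu}-I\|_{L^{p}}$ with $\|\partial_{x}\hat{w}\|_{L^{\infty}}$ from \eqref{hatwLinfty}. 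The main obstacle I anticipate is the $p=1$ case: the lack of $L^{1}$-boundedness of the Cauchy operator forces one to exploit the weighted structure of $\hat{w}$ directly at the level of the singular integral, rather than through an abstract operator bound.
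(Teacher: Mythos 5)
Your proposal is correct and follows essentially the same route as the paper: both bound $\|\hat{\mathcal{C}}_{\hat{w}}\|_{\mathcal{B}(L^p(\hat{\Gamma}))}$ by $K_p\|\hat{w}\|_{L^\infty(\hat{\Gamma})}$, invert $I-\hat{\mathcal{C}}_{\hat{w}}$ by a Neumann/geometric series, and feed in the $L^p$ and $L^\infty$ estimates on $\hat{w}$ and $\partial_x\hat{w}$ from \eqref{hatwLinfty}--\eqref{Lp norm of what}, with the $l=1$ case handled by differentiating the series (paper) or equivalently the fixed-point equation (you). The only divergence is your separate treatment of $p=1$: the paper simply writes $K_p:=\|\hat{\mathcal{C}}_-\|_{\mathcal{B}(L^p(\hat{\Gamma}))}<\infty$ for all $1\le p<\infty$ without comment, and since only $p=2$ is used later this endpoint is immaterial (note also that your H\"older reduction from $p>1$ would yield $t^{-1/2}(\ln t)^{(p-1)/p}$ rather than the stated $t^{-1/2}$ at $p=1$).
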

\begin{proof}
Let $K_{p} := \|\hat{\mathcal{C}}_{-}\|_{\mathcal{B}(L^p(\hat{\Gamma}))} < \infty$ and assume $t$ is so large that $\|\hat{w}\|_{L^\infty(\hat{\Gamma})}<K_{p}^{-1}$. Standard estimates using the Neumann series show that
\begin{align*}
\|\hat{\mu} - I\|_{L^p(\hat{\Gamma})} \leq
\sum_{j=1}^{\infty} \|\hat{\mathcal{C}}_{\hat{w}}\|_{\mathcal{B}(L^p(\hat{\Gamma}))}^{j-1}\|\hat{\mathcal{C}}_{\hat{w}}I\|_{L^{p}(\hat{\Gamma})}  
\leq \sum_{j=1}^{\infty} K_{p}^{j}\|\hat{w}\|_{L^\infty(\hat{\Gamma})}^{j-1} \|\hat{w}\|_{L^p(\hat{\Gamma})} = \frac{K_{p} \|\hat{w}\|_{L^p(\hat{\Gamma})}}{1-K_{p}\|\hat{w}\|_{L^\infty(\hat{\Gamma})}}.
\end{align*}
The claim for $l = 0$ now follows from \eqref{hatwLinfty} and \eqref{Lp norm of what}. Using that
\begin{align*}
\partial_{x}(\hat{\mu} - I) = \partial_{x} \sum_{j=1}^{\infty} (\hat{\mathcal{C}}_{\hat{w}})^{j}I = \sum_{j=1}^{\infty} \big[ (\partial_{x}\hat{\mathcal{C}}_{\hat{w}}) \hat{\mathcal{C}}_{\hat{w}} \cdots \hat{\mathcal{C}}_{\hat{w}} + \ldots + \hat{\mathcal{C}}_{\hat{w}} \cdots \hat{\mathcal{C}}_{\hat{w}}(\partial_{x}\hat{\mathcal{C}}_{\hat{w}}) \big]I,
\end{align*}
we find
\begin{align*}
 \|\partial_{x}&(\hat{\mu} - I)\|_{L^p(\hat{\Gamma})}  \leq  \sum_{j=2}^{\infty} (j-1)\|\hat{\mathcal{C}}_{\hat{w}}\|_{\mathcal{B}(L^p(\hat{\Gamma}))}^{j-2} \|\partial_{x}\hat{\mathcal{C}}_{\hat{w}}\|_{\mathcal{B}(L^p(\hat{\Gamma}))} \|\hat{\mathcal{C}}_{\hat{w}}I\|_{L^{p}(\hat{\Gamma})}  
 	\\
& + \sum_{j=1}^{\infty} \|\hat{\mathcal{C}}_{\hat{w}}\|_{\mathcal{B}(L^p(\hat{\Gamma}))}^{j-1} \|\partial_{x}\hat{\mathcal{C}}_{\hat{w}}I\|_{L^{p}(\hat{\Gamma})} \\
\leq &\; C \sum_{j=2}^{\infty} j K_{p}^{j-2}\|\hat{w}\|_{L^\infty(\hat{\Gamma})}^{j-2} \|\partial_{x}\hat{w}\|_{L^\infty(\hat{\Gamma})} \|\hat{w}\|_{L^p(\hat{\Gamma})} + \sum_{j=1}^{\infty} K_{p}^{j}\|\hat{w}\|_{L^\infty(\hat{\Gamma})}^{j-1} \|\partial_{x}\hat{w}\|_{L^p(\hat{\Gamma})} 
	\\
\leq &\; C\frac{\|\partial_{x}\hat{w}\|_{L^\infty(\hat{\Gamma})} \|\hat{w}\|_{L^p(\hat{\Gamma})} + \|\partial_{x}\hat{w}\|_{L^p(\hat{\Gamma})}}{1-K_{p}\|\hat{w}\|_{L^\infty(\hat{\Gamma})}}
\end{align*}
and the claim for $l = 1$ follows from another application of \eqref{hatwLinfty} and \eqref{Lp norm of what}.
\end{proof}

\subsection{Asymptotics of $\hat{m}$}
The following nontangential limit exists as $k \to \infty$:
\begin{align*}
L(x,t):=\ntlim_{k\to \infty} k(\hat{m}(x,t,k) - I)
& = - \frac{1}{2\pi i}\int_{\hat{\Gamma}} \hat{\mu}(x,t,k) \hat{w}(x,t,k) dk.
\end{align*}
\begin{lemma}
As $t \to \infty$, 
\begin{align}\label{limlhatm}
L(x,t) = -\frac{1}{2\pi i}\int_{\partial \mathcal{D}} \hat{w}(x,t,k) dk + O(t^{-1}\ln t)
\end{align}
and (\ref{limlhatm}) can be differentiated termwise with respect to $x$ without increasing the error term.
\end{lemma}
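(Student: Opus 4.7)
My plan is to decompose $\hat\mu = I + (\hat\mu - I)$ and then treat each piece using Lemmas \ref{whatlemma} and \ref{lemma: estimate on mu}. Writing
\begin{align*}
L(x,t) = -\frac{1}{2\pi i}\int_{\hat\Gamma} \hat w\, dk \;-\; \frac{1}{2\pi i}\int_{\hat\Gamma} (\hat\mu - I)\hat w\, dk,
\end{align*}
I would first dispose of the cross term by Cauchy--Schwarz. Taking $p=2$ in Lemma \ref{lemma: estimate on mu} gives $\|\hat\mu - I\|_{L^2(\hat\Gamma)} \leq Ct^{-1/2}(\ln t)^{1/2}$, and since $1+|k|\geq 1$, \eqref{Lp norm of what} with $p=2$ yields $\|\hat w\|_{L^2(\hat\Gamma)} \leq Ct^{-1/2}(\ln t)^{1/2}$. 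The product is $O(t^{-1}\ln t)$, as required.

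Next I would split the remaining integral over $\hat\Gamma = \Gamma \cup \Gamma' \cup \hat{\mathcal{X}}^\epsilon \cup \partial\mathcal{D}$. On the unbounded piece $\Gamma$, the bound \eqref{hatwestimate1} together with the fact that $k_0$ is bounded below (since $\mathcal{I}$ is compact in $(\zeta_0,\infty)$) gives $\|\hat w\|_{L^1(\Gamma)} = O(t^{-1})$; on $\Gamma'$, \eqref{hatwestimate2} makes the contribution exponentially small; and \eqref{hatwestimate4} yields $\|\hat w\|_{L^1(\hat{\mathcal{X}}^\epsilon)} = O(t^{-1}\ln t)$. Therefore only $\int_{\partial\mathcal{D}}\hat w\, dk$ survives to leading order, proving \eqref{limlhatm}.

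For the differentiated statement, since $\hat\Gamma$ is $x$-independent and the $l=1$ bounds in Lemmas \ref{whatlemma} and \ref{lemma: estimate on mu} dominate $\partial_x(\hat\mu\hat w)$ by an integrable function uniformly in $x$, I would bring $\partial_x$ inside the integral and expand
\begin{align*}
\partial_x(\hat\mu\hat w) = \partial_x\hat w + (\partial_x\hat\mu)\hat w + (\hat\mu - I)\partial_x\hat w.
\end{align*}
The two cross terms are each $O(t^{-1}\ln t)$ by exactly the Cauchy--Schwarz argument above, using also the $l=1$ versions of the $L^2$ bounds. The piece $\int_{\hat\Gamma}\partial_x\hat w\, dk$ is handled by the same four-way contour split, the $\partial_x$-versions of \eqref{hatwestimate1}--\eqref{hatwestimate5} yielding the same decay rates; and on the compact fixed circle $\partial\mathcal{D}$ one may pull $\partial_x$ back outside the integral to recognize $\int_{\partial\mathcal{D}}\partial_x\hat w\, dk = \partial_x\int_{\partial\mathcal{D}}\hat w\, dk$. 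No serious obstacle stands out beyond careful bookkeeping; the one point to watch is the choice of Hölder exponents, as $p = q = 2$ is precisely what balances the two $\sqrt{\ln t}$ factors into a single $\ln t$ and thus matches the stated error.
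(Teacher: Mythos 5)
Your proposal is correct and follows essentially the same route as the paper: the paper's proof uses precisely the decomposition $L = -\frac{1}{2\pi i}\int_{\partial\mathcal{D}}\hat w\,dk + L_1 + L_2$ with $L_1 = -\frac{1}{2\pi i}\int_{\hat\Gamma\setminus\partial\mathcal{D}}\hat w\,dk$ and $L_2 = -\frac{1}{2\pi i}\int_{\hat\Gamma}(\hat\mu - I)\hat w\,dk$, and then invokes Lemmas \ref{whatlemma} and \ref{lemma: estimate on mu} with ``straightforward estimates.'' You have simply written out those estimates explicitly (your $p=q=2$ H\"older bound on the cross term and the four-way contour split for $L_1$ are exactly the intended details), including the $l=1$ case for the differentiated statement.
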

\begin{proof}
Since
$$L(x,t) = -\frac{1}{2\pi i}\int_{\partial \mathcal{D}} \hat{w}(x,t,k) dk + L_1(x,t) + L_2(x,t),$$
where
\begin{align*}
& L_1(x,t) = -\frac{1}{2\pi i}\int_{\hat{\Gamma}\setminus\partial \mathcal{D}} \hat{w}(x,t,k) dk, \quad L_2(x,t) = -\frac{1}{2\pi i}\int_{\hat{\Gamma}} (\hat{\mu}(x,t,k)-I)\hat{w}(x,t,k) dk,
\end{align*}
the lemma follows from Lemmas \ref{whatlemma} and \ref{lemma: estimate on mu} and straightforward estimates.
\end{proof}
We infer from \eqref{mmodmuestimate1} that the function $F$ defined by
\begin{align*}
F(\zeta,t) = & - \frac{1}{2\pi i} \int_{\partial D_\epsilon(k_0)} \hat{w}(x,t,k) dk	
= - \frac{1}{2\pi i}\int_{\partial D_\epsilon(k_0)}((m^{k_0})^{-1} - I) dk
\end{align*}
satisfies
\begin{align*}
F(\zeta, t) = &\;  \frac{Y(\zeta,t) m_1^{X}(q(\zeta)) Y(\zeta,t)^{-1}}{3^{1/4}\sqrt{2}\sqrt{t}} + O( t^{-1} \ln t) \qquad \mbox{as } t \to \infty.
\end{align*}
The symmetry properties of $\hat{v}$ imply that both  $\mathcal{A} \hat{m}(x, t, \omega k) \mathcal{A}^{-1}$ and $\hat{m}(x, t, k)$ satisfy RH problem \ref{RHmhat}; by uniqueness they must be equal, i.e.,
$$\hat{m}(x, t, k) = \mathcal{A} \hat{m}(x, t, \omega k) \mathcal{A}^{-1}, \qquad k \in \C \setminus \hat{\Gamma}.$$
It follows that $\hat{\mu}$ and $\hat{w}$ also obey this symmetry. Using this in (\ref{limlhatm}), we find that the leading contribution from $\partial \mathcal{D}$ to the right-hand side of (\ref{limlhatm}) is
\begin{align*}
- \frac{1}{2\pi i}\int_{\partial \mathcal{D}} \hat{w}(x,t,k) dk
= & - \frac{1}{2\pi i}\bigg(\int_{\partial D_\epsilon(k_0)}+\int_{\omega \partial D_\epsilon(k_0)}+\int_{\omega^2 \partial D_\epsilon(k_0)}\bigg)  \hat{w}(x,t,k) dk
	\\
= &\; F(\zeta,t) + \omega \mathcal{A}^{-1} F(\zeta, t) \mathcal{A}
+ \omega^2 \mathcal{A}^{-2}F(\zeta,t)\mathcal{A}^2.
\end{align*}
Therefore, \eqref{limlhatm} implies that
\begin{align}\nonumber
\partial_{x}^{l}\lim_{k\to \infty} k(\hat{m}(x,t,k) - I)
= & \; \partial_{x}^{l}\bigg(\frac{ \sum_{j=0}^2 \omega^j \mathcal{A}^{-j} Y(\zeta,t) m_1^{X}(q(\zeta)) Y(\zeta,t)^{-1}\mathcal{A}^j}{3^{1/4}\sqrt{2}\sqrt{t}} \bigg)
	\\ \label{limlhatm2}
& + O(t^{-1}\ln t), \qquad t \to \infty, \quad l=0,1,
\end{align}
uniformly for $\zeta \in \mathcal{I}$.

\section{Asymptotics of $u(x,t)$}\label{uasymptoticssec}
Recall from the discussion in Section \ref{overviewsec} (see Proposition \ref{nprop} and Lemma \ref{nlemma}) that 
\begin{align*}
u(x,t) = -\frac{3}{2}\partial_{x} \Big(\lim_{k\to \infty}k(n_{3}(x,t,k) - 1)\Big),
\end{align*}
where $n = (\omega, \omega^2, 1)m$.
Taking the transformations of Section \ref{transsec} into account, we can write
$$m = \hat{m}H^{-1}\Delta^{-1}G^{-1}$$
for all $k \in \mathbb{C}\setminus \bar{\mathcal{D}}$, where $G$, $\Delta$, $H$ are defined in (\ref{Gdef}), (\ref{Deltadef}), and (\ref{Hdef}), respectively.
It follows from Lemma \ref{Glemma}, equation (\ref{Deltaatinfty}), and Lemma \ref{Hlemma} that
\begin{align}\label{recoverun}
u(x,t) = -\frac{3}{2}\frac{\partial}{\partial x}\lim_{k\to \infty}k(\hat{n}_{3}(x,t,k) - 1),
\end{align}
where $\hat{n} = (\omega, \omega^2, 1)\hat{m}$. Thus, utilizing (\ref{limlhatm2}) and the fact that $\overline{\Gamma(i\nu)} = \Gamma(-i\nu)$, 
\begin{align*}
u(x,t) = & -\frac{3}{2}\frac{d}{dx} \bigg(\begin{pmatrix}
\omega & \omega^2 & 1
\end{pmatrix} \frac{ \sum_{j=0}^2 \omega^j \mathcal{A}^{-j} Y(\zeta,t) m_1^{X}(q(\zeta)) Y(\zeta,t)^{-1}\mathcal{A}^j}{3^{1/4}\sqrt{2}\sqrt{t}} \bigg)_3 + O(t^{-1}\ln t)\\
= & -\frac{3}{2}\frac{d}{dx}\bigg( \frac{\omega^2d_0^{-1}e^{t\Phi_{21}(\zeta, k_0)}\beta_{21}+\omega d_0e^{-t\Phi_{21}(\zeta, k_0)}\beta_{12}}{3^{1/4}\sqrt{2}\sqrt{t}} \bigg)+ O(t^{-1}\ln t)\\
= & -\frac{3\times 2}{2\times3^{1/4}\sqrt{2t}}\frac{d}{dx}\re\bigg(\omega^2d_0^{-1}e^{t\Phi_{21}(\zeta, k_0)}\beta_{21} \bigg) + O(t^{-1}\ln t), \qquad t \to \infty.
\end{align*}
Using the identities
\begin{align*}
& |\Gamma(i\nu)| = \frac{\sqrt{2\pi}}{\sqrt{\nu}\sqrt{e^{\pi \nu}-e^{-\pi \nu}}} = \frac{\sqrt{2\pi}}{\sqrt{\nu}e^{\frac{\pi\nu}{2}}|q|}, \\
& \delta_{3}^{-1}(\zeta,k_{0})\delta_{5}^{-1}(\zeta,k_{0}) = \exp \bigg[ i \nu \log( 3k_{0}^{2}) + \frac{1}{\pi i}\int_{k_{0}}^{\infty} \log |\omega k_{0}-s|d \ln(1-|r_1(s)|^{2}) \bigg],
\end{align*}
we conclude that, as $t\to \infty$,
\begin{align*}
u(x,t) = & -\frac{3^{3/4}}{\sqrt{2t}} \frac{d}{dx}\re\bigg\{ \sqrt{\nu}\exp\bigg[ \frac{4\pi i}{3} + i\nu\ln(6\sqrt{3}tk_0^2)  -i \sqrt{3} k_0^2 t  \\
&  - \frac{1}{\pi i}\int_{k_0}^{\infty} \ln \frac{|s-k_0|}{|s-\omega k_0|} d\ln(1 - |r_1(s)|^2) + i \Big(\frac{\pi}{4}-\arg{q}-\arg\Gamma(i\nu)\Big)\bigg]\bigg\} + O(t^{-1}\ln t)\\
= & -\frac{3^{3/4}\sqrt{\nu}}{\sqrt{2t}} \frac{d}{dx}\cos\bigg(\frac{19\pi}{12}+\nu\ln(6\sqrt{3}tk_0^2)-\sqrt{3}k_0^2t
-\arg{q}\\
&-\arg{\Gamma(i\nu)} + \frac{1}{\pi }\int_{k_0}^{\infty} \ln \frac{|s-k_0|}{|s-\omega k_0|} d\ln(1 - |r_1(s)|^2)\bigg)
 + O(t^{-1}\ln t)\\
= & -\frac{3^{5/4}k_0\sqrt{\nu}}{\sqrt{2t}}  \sin\bigg(\frac{19\pi}{12}+\nu\ln(6\sqrt{3}tk_0^2)-\sqrt{3}k_0^2t
-\arg{q} 
	\\
& - \arg{\Gamma(i\nu)} + \frac{1}{\pi }\int_{k_0}^{\infty} \ln \frac{|s-k_0|}{|s-\omega k_0|} d\ln(1 - |r_1(s)|^2)\bigg) + O(t^{-1}\ln t)
\end{align*}
uniformly for $\zeta \in \mathcal{I}$. 
This proves \eqref{uasymptotics} and completes the proof of Theorem \ref{mainth}.

\appendix
\section{Exact solution on a cross}\label{appA}
\renewcommand{\theequation}{A.\arabic{equation}}
Let $X = X_1 \cup \cdots \cup X_4 \subset \C$ be the cross defined by
\begin{align} \nonumber
&X_1 = \bigl\{se^{\frac{i\pi}{4}}\, \big| \, 0 \leq s < \infty\bigr\}, &&
X_2 = \bigl\{se^{\frac{3i\pi}{4}}\, \big| \, 0 \leq s < \infty\bigr\},
	\\ \label{Xdef}
&X_3 = \bigl\{se^{-\frac{3i\pi}{4}}\, \big| \, 0 \leq s < \infty\bigr\}, &&
X_4 = \bigl\{se^{-\frac{i\pi}{4}}\, \big| \, 0 \leq s < \infty\bigr\},
\end{align}
and oriented away from the origin, see Figure \ref{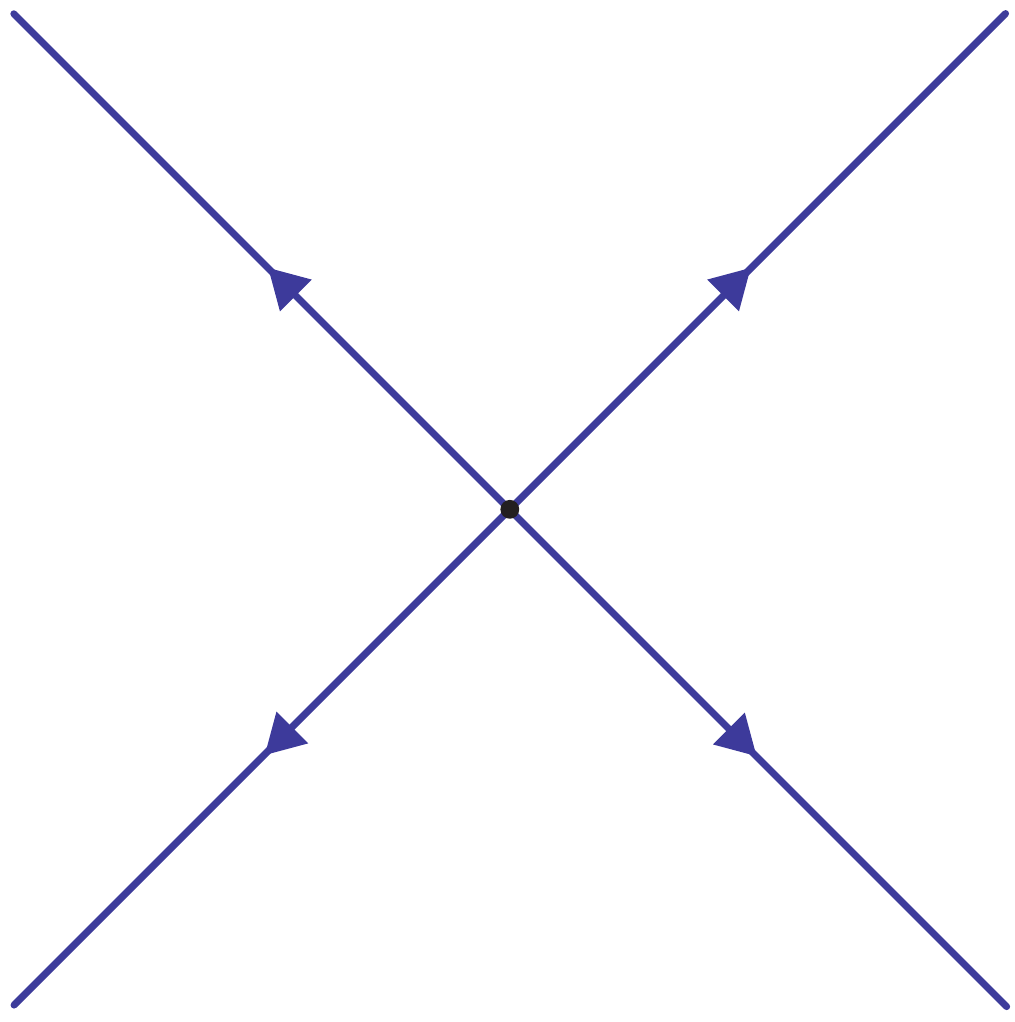}. Let $\D \subset \C$ denote the open unit disk and define the function $\nu:\D \to (0,\infty)$ by
$\nu(q) = -\frac{1}{2\pi} \ln(1 - |q|^2)$.
We consider the following family of RH problems parametrized by $q \in \D$.

\begin{figure}
\begin{center}
 \begin{overpic}[width=.4\textwidth]{X.pdf}
      \put(73.5,68){\small $X_1$}
      \put(18.5,68){\small $X_2$}
      \put(17,28){\small $X_3$}
      \put(75,28){\small $X_4$}
      \put(48.3,42.6){$0$}
    \end{overpic}
     \begin{figuretext}\label{X.pdf}
        The contour $X = X_1 \cup X_2 \cup X_3 \cup X_4$ defined in (\ref{Xdef}).
     \end{figuretext}
     \end{center}
\end{figure}

\begin{RHproblem}[RH problem for $m^X$]\label{RHmc}
Find a $3 \times 3$-matrix valued function $m^X(q, z)$ with the following properties:
\begin{enumerate}[$(a)$]
\item $m^X(q, \cdot) : \C \setminus X \to \mathbb{C}^{3 \times 3}$ is analytic.

\item The limits of $m^X(q, z)$ as $z$ approaches $X \setminus \{0\}$ from the left and right exist, are continuous on $X \setminus \{0\}$, and related by
\begin{align*}
m_+^X(q, z) =  m_-^X(q, z) v^X(q, z), \qquad k \in X\setminus\{0\},
\end{align*}
where the jump matrix $v^X(q, z)$ is defined by
\begin{align}
& \begin{pmatrix} 1 & 0 & 0	\\
\frac{\bar{q}}{1 - |q|^2} z^{-2i\nu(q)}e^{\frac{iz^2}{2}} & 1 & 0 \\
0 & 0 & 1 \end{pmatrix} \mbox{ if } z \in X_{1}, & & \begin{pmatrix} 1 & q z^{2i\nu(q)} e^{-\frac{iz^2}{2}} & 0 	\\
0 & 1 & 0 \\
0 & 0 & 1 \end{pmatrix} \mbox{ if } z \in X_{2}, \nonumber \\
& \begin{pmatrix} 1 & 0 & 0 \\
-\bar{q} z^{-2i\nu(q)}e^{\frac{iz^2}{2}} & 1 &  0 \\
0 & 0 & 1 \end{pmatrix} \mbox{ if } z \in X_{3}, & & \begin{pmatrix} 1 &  \frac{-q}{1 - |q|^2}z^{2i\nu(q)} e^{-\frac{iz^2}{2}} & 0 \\
 0 & 1 & 0 	\\
0 & 0 & 1 \end{pmatrix} \mbox{ if } z \in X_{4}, \label{vXdef}
\end{align}
with the branch cut running along the positive real axis, i.e., $z^{2i\nu(q)} = e^{2i\nu(q)\ln_0(z)}$.

\item $m^{X}(q,z) = I + O(z^{-1})$ as $z \to \infty$.

\item $m^{X}(q,z) = O(1)$ as $z \to 0$.
\end{enumerate}
\end{RHproblem}

The proof of the following lemma is standard and relies on deriving an explicit formula for the solution $m^X$ in terms of parabolic cylinder functions \cite{I1981}.

\begin{lemma}[The solution $m^X$]\label{Xlemma}
  The RH problem \ref{RHmc} has a unique solution $m^X(q, z)$ for each $q \in \D$. This solution satisfies
\begin{align}\label{mXasymptotics}
  m^X(q, z) = I + \frac{m_1^X(q)}{z} + O\biggl(\frac{1}{z^2}\biggr), \qquad z \to \infty,~~q \in \D,
\end{align}
where the error term is uniform with respect to $\arg z \in [0, 2\pi]$ and $q$ in compact subsets of $\D$, and the function $m_1^X(q)$ is defined by
\begin{align}\label{m1Xdef}
m_1^X(q) =\begin{pmatrix} 
0 & \beta_{12} & 0 	\\
\beta_{21} & 0 & 0 \\
0 & 0 & 0 \end{pmatrix}, \qquad q \in \D,
\end{align}
where $\beta_{12}$ and $\beta_{21}$ are defined by
$$
\beta_{12}=\frac{\sqrt{2\pi}e^{-\frac{\pi i}{4}} e^{-\frac{5\pi \nu}{2}} }{\bar{q} \Gamma(-i\nu)} ,\qquad
\beta_{21}=\frac{\sqrt{2\pi}e^{\frac{\pi i}{4}} e^{\frac{3\pi \nu}{2}} }{q \Gamma(i\nu)},
\qquad q \in \D.
$$
Moreover, for each compact subset $K$ of $\D$,
$$\sup_{q \in K} \sup_{z \in \C \setminus X} |\partial_{q}^{l}m^X(q, z)| < \infty, \qquad l=0,1.$$
\end{lemma}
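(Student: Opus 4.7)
\textbf{Proof proposal for Lemma \ref{Xlemma}.} The plan is to reduce to the classical $2\times 2$ model problem solved in \cite{I1981,DZ1993} and then read off the needed leading coefficients.

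\emph{Step 1: Block reduction.} Every jump matrix in \eqref{vXdef} has the form $\mathrm{diag}(\hat v(q,z),1)$, where $\hat v(q,z)$ is a $2\times 2$ matrix supported in the upper-left block. It is therefore natural to seek a solution of the form
\begin{align*}
m^X(q,z) = \begin{pmatrix} M(q,z) & 0 \\ 0 & 1 \end{pmatrix},
\end{align*}
where $M(q,\cdot)$ is a $2\times 2$ matrix valued function which is analytic off $X$, has boundary values related by $M_+ = M_- \hat v$, tends to $I$ as $z\to\infty$, and is bounded as $z\to 0$. Conversely, any solution of RH problem \ref{RHmc} must have this form: the third column and the third row can be shown to coincide with $(0,0,1)^T$ and $(0,0,1)$ respectively by a scalar RH argument (the jumps fix the third row and column, and the limits at $\infty$ and $0$ force the unique choice). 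Hence existence and uniqueness of $m^X$ are equivalent to existence and uniqueness of $M$.

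\emph{Step 2: Solve the $2\times 2$ model via ODE.} The $2\times 2$ RH problem for $M$ with jumps \eqref{vXdef} is the classical model problem that appears in the analysis of the NLS, mKdV and KdV equations. One introduces
\begin{align*}
\Psi(q,z) = M(q,z)\, z^{i\nu(q)\sigma_3}e^{-\tfrac{iz^{2}}{4}\sigma_3},
\end{align*}
so that a direct computation shows $\Psi_+(q,z)=\Psi_-(q,z)$ across each ray $X_j$, i.e. $\Psi(q,\cdot)$ extends to an entire function of $z$ (with the $z^{i\nu\sigma_3}$-branch cut cancelling out). The combination
\begin{align*}
\bigl(\partial_z\Psi + \tfrac{iz}{2}\sigma_3\Psi\bigr)\Psi^{-1}
\end{align*}
has no jumps, is entire, and by the asymptotics $M = I + O(z^{-1})$ grows at most linearly at infinity, so Liouville's theorem forces it to be of the form $-\tfrac{iz}{2}\sigma_3 + B(q)$ for a constant (in $z$) off-diagonal matrix $B(q)$. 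This gives the linear ODE $\partial_z\Psi = \bigl(-\tfrac{iz}{2}\sigma_3+B(q)\bigr)\Psi$, which in each column decouples into a pair of parabolic cylinder equations of order $\pm i\nu(q)$. The admissible solutions, selected by their sectorial asymptotics dictated by $M\to I$, are uniquely parabolic cylinder functions $D_{\pm i\nu}(\pm e^{\pm 3\pi i/4}z)$ normalized as in \cite{I1981}. This simultaneously establishes existence, uniqueness, and an explicit formula for $M$, and hence for $m^X$.

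\emph{Step 3: Asymptotic expansion and $m_1^X$.} Expanding the parabolic cylinder functions via their standard large-argument asymptotics, one obtains
\begin{align*}
M(q,z) = I + \frac{1}{z}\begin{pmatrix} 0 & \beta_{12}\\ \beta_{21} & 0\end{pmatrix} + O(z^{-2}), \qquad z\to\infty,
\end{align*}
uniformly in $\arg z\in[0,2\pi]$ and in $q$ on compact subsets of $\D$. The off-diagonal entries $\beta_{12},\beta_{21}$ are determined by the connection formulas for $D_a$, and a careful bookkeeping of the branch of $z^{\pm 2i\nu}$ on $(0,\infty)$, of the reflection $D_{-i\nu-1}(-\zeta)$, and of the identity $\Gamma(i\nu)\Gamma(-i\nu)=\pi/(\nu\sinh\pi\nu)$ yields precisely the formulas for $\beta_{12}$ and $\beta_{21}$ stated in the lemma. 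Putting the trivial $(3,3)$-block back gives \eqref{mXasymptotics} and \eqref{m1Xdef}.

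\emph{Step 4: Smoothness and uniform bounds in $q$.} Since $\nu(q)=-\tfrac{1}{2\pi}\ln(1-|q|^2)$ is a smooth function of $q\in\D$, and the explicit formula for $M$ depends smoothly on $q$ through $\nu(q)$, $q$, and $\bar q$, continuity and uniform boundedness of $m^X(q,z)$ and $\partial_q m^X(q,z)$ on $K\times(\C\setminus X)$ follow from the smoothness of the parabolic cylinder functions in the order parameter together with the fact that on every compact $K\subset\D$ the quantity $\nu(q)$ is bounded. The main technical obstacle will be the careful matching in Step 3, where tracking the phases $e^{\pi i/4}$, $e^{3\pi\nu/2}$, etc., through the four sectors and six jump rays is precisely what produces the exact form of $\beta_{12},\beta_{21}$; everything else is bookkeeping against the well-known model.
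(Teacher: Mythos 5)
Your overall strategy coincides with the paper's: the paper offers no detailed proof of Lemma \ref{Xlemma} beyond the remark that it is standard and relies on an explicit formula for $m^X$ in terms of parabolic cylinder functions following \cite{I1981}, and your Steps 1--4 (block reduction to the classical $2\times 2$ cross problem, the Liouville/ODE argument, the parabolic cylinder asymptotics, smooth dependence on $q$) are precisely that standard construction. The block reduction in Step 1 is sound: the third column of $m^X$ has no jump and is fixed by Liouville, and once the invertible $2\times 2$ solution $M$ is in hand the third row is forced as well (note that this last point uses the existence and invertibility of $M$, so logically it belongs after Step 2 rather than before it).

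One assertion in Step 2 is wrong as written and should be corrected, although it does not derail the argument: the function $\Psi = M\, z^{i\nu\sigma_3}e^{-iz^{2}\sigma_3/4}$ does \emph{not} satisfy $\Psi_+=\Psi_-$ and does not extend to an entire function of $z$ (if it did, $M$ would be rational and the problem trivial). Conjugating the jump on, say, $X_1$ by $z^{i\nu\sigma_3}e^{-iz^{2}\sigma_3/4}$ turns the $(2,1)$ entry $\frac{\bar{q}}{1-|q|^{2}}z^{-2i\nu}e^{iz^{2}/2}$ into the constant $\frac{\bar{q}}{1-|q|^{2}}$, so $\Psi$ has \emph{constant} (i.e.\ $z$-independent) jumps across the four rays, together with an additional constant jump $e^{-2\pi\nu\sigma_3}$ across $(0,\infty)$ coming from the branch cut of $z^{i\nu\sigma_3}$, which does not cancel. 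What the constancy of these jumps buys is exactly the step you actually use next: the logarithmic derivative $\partial_z\Psi\,\Psi^{-1}$ has no jumps, its singularity at the origin is removable by condition $(d)$ of RH problem \ref{RHmc}, and Liouville's theorem then yields the linear ODE with the off-diagonal, $z$-independent matrix $B(q)$. With that correction, the remainder of the proposal --- the parabolic cylinder solutions selected by sectorial asymptotics, the connection formulas producing $\beta_{12}$ and $\beta_{21}$, and the uniform bounds in $q$ obtained from smoothness of the parabolic cylinder functions in the order parameter --- is the standard and correct route, matching what the paper intends by its citation.
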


\subsection*{Acknowledgements}
CC acknowledges support from  the European Research Council, Grant Agreement No. 682537 and the Swedish Research Council, Grant No. 2015-05430.
JL acknowledges support from the European Research Council, Grant Agreement No. 682537, the Swedish Research Council, Grant No. 2015-05430, the G\"oran Gustafsson Foundation, and the Ruth and Nils-Erik Stenb\"ack Foundation.
DW acknowledges support from the Swedish Research Council, Grant No. 2015-05430, the National Natural Science Foundation of China, Grant No. 11971067, and the Beijing Great Wall Talents Cultivation Program, Grant No. CIT\&TCD20180325.

\bibliographystyle{plain}
\bibliography{is}

\end{document}